\newcommand{\R}{\mathbf{R}}
\newcommand{\Z}{\mathbf{Z}}
\newcommand{\Hyp}{\mathbf{H}}
\def\a{\alpha}
\def\b{\beta}
\def\g{\gamma}
\def\d{\delta}
\def\e{\varepsilon}
\def\l{\lambda}
\def\f{\varphi}
\def\G{\Gamma}
\def\D{\Delta}
\newcommand{\fix}{\mathsf{Fix}}
\newcommand{\Conf}{\mathsf{Conf}}
\newcommand{\Diff}{\mathsf{Diff}}
\newcommand{\B}{\mathsf{B}}
\newcommand{\mradius}{\mathsf{R}}
\newcommand{\hgy}{\mathrm{H}}
\newcommand{\area}{\mathsf{area}}
\newcommand{\sys}{\mathsf{sys}}
\newcommand{\Mod}{\mathsf{Mod}}
\newcommand{\Isom}{\mathsf{Isom}}
\newcommand{\SL}{\mathsf{SL}}
\DeclareMathOperator{\hess}{Hess}
\newcommand{\short}{\mathsf{Sing}}
\newcommand{\arcs}{\mathcal{A}}
\newcommand{\Dcal}{\mathcal{D}}
\newcommand{\Hold}{\mathcal{H}(\l;\R)}
\newcommand{\push}{\mathsf{push}}
\newcommand{\teich}{\mathsf{Teich}}
\newcommand{\poly}{\mathsf{P}}
\newcommand{\Sys}{\mathsf{S}}
\newcommand{\V}{\mathsf{V}}
\newcommand{\rep}{\mathsf{Rep}}
\newcommand{\SO}{\mathsf{SO}}
\newcommand{\mrm}{\mathrm}
\newcommand{\mc}{\mathcal}
\theoremstyle{plain}
\newtheorem{theorem}{Theorem}[section]
\newtheorem{corollary}[theorem]{Corollary}
\newtheorem{proposition}[theorem]{Proposition}
\newtheorem{lemma}[theorem]{Lemma}
\newtheorem*{lemmanonumber}{Lemma}
\theoremstyle{definition}
\newtheorem{definition}{Definition}[section]
\newtheorem{question}{Question}[section]
\newtheorem{remark}{Remark}[section]
\newtheorem*{notation}{Notation}
\title[]{The injectivity radius of hyperbolic surfaces and some Morse functions over moduli spaces}
\begin{document}
\maketitle
%\tableofcontents
 \begin{small}
\begin{flushleft} 
 \textbf{Matthieu Gendulphe}\\
 Dipartimento di Matematica Guido Castelnuovo\\
 Sapienza Universit\`a di Roma\\
 email : matthieu@gendulphe.com
\end{flushleft}\end{small}
\medskip

\begin{abstract}
This article is devoted to the variational study of two functions defined over some Teichm\"uller spaces of hyperbolic surfaces. One is the systole of geodesic loops based at some fixed point, and the other one is the systole of arcs.\par
 For each of them we determine all the critical points. It appears that the systole of arcs is a topological Morse function, whereas the systole of geodesic loops have some degenerate critical points. However, these degenerate critical points are in some sense the obvious one, and they do not interfere in the variational study of the function.\par  

 At a nondegenerate critical point, the systolic curves (arcs or loops depending on the function involved) decompose the surface into regular polygons. This enables a complete classification of these points, and some explicit computations. In particular we determine the global maxima of these functions. This generalizes optimal inequalities due to Bavard (\cite{bavard-toulouse,bavard-manuscripta}) and Deblois (\cite{deblois}). We also observe that there is only one local maximum, this was already proved in some cases by Deblois (\cite{deblois-preprint}).\par
 
 Our approach is based on the \emph{geometric Vorono\"i theory} developed by Bavard (\cite{bavard-crelle}). To use this variational framework, one has to show that the length functions (of arcs or loops) have positive definite Hessians with respect to some system of coordinates for the Teichm\"uller space. Following our previous work \cite{gendulphe-shearing}, we choose Bonahon's shearing coordinates, and we compute explicitly the Hessian of the length functions of geodesic loops. Then we use a characterization of the nondegenerate critical points due to Akrout (\cite{akrout}). \end{abstract}

\renewcommand{\abstractname}{R\'esum\'e}

%%%%%%%%%%%%%%%%%%%%%%
\section{Introduction}
%%%%%%%%%%%%%%%%%%%%%%

 Let $(S,m)$ be a hyperbolic surface of finite area without boundary.  The \emph{systole} of $m$ is the length of its shortest closed geodesic, we denote it by $\sys(m)$. It defines a continuous function over the Teichm\"uller space $\teich(S)$, namely the space of isotopy classes of hyperbolic metrics on $S$. It is in general very difficult to study a metric invariant as a function over $\teich(S)$. However, a variational approach of the systole has been initiated by Schmutz (\cite{schmutz}). It has been showed that the systole is a topological Morse function (Akrout \cite{akrout}), its critical points have been characterized, and among them the local maxima (Schmutz \cite{schmutz}, Bavard \cite{bavard-crelle}). This certainly makes the systole a very interesting function.\par
 
 Another interesting metric invariant is the \emph{maximal injectivity radius}, that is the radius of the largest hyperbolic disk isometrically embedded in $(S,m)$, we denote it by $\mradius(m)$. As for the systole, the maximal injectivity radius defines a continuous function over $\teich(S)$, and we know quite a lot about its extremal values: Yamada (\cite{yamada}) determined its infimum when $S$ is orientable, Bavard (\cite{bavard-toulouse}) determined the points realizing its global maximum when $S$ is closed, Bacher and Vdovina (see \cite{vdovina}) computed the number of points realizing the global maximum when $S$ is closed orientable, Girondo and Gonz\'ales-Diez and Nakamura (\cite{girondo,nakamura}) studied the number of extremal disks in a surface realizing the global maximum. More recently, Deblois (\cite{deblois}) determined the global maximum of $\mradius$ in the case of orientable surfaces with cusps, and showed (\cite{deblois-preprint}) that local maxima are global ones. Despite these results we do not have any variational framework for the study of $\mradius$.\par
 
 The systole and the maximal injectivity radius are the extrema of a third metric invariant which is the pointwise version of the systole. Let us fix a point $p$ in $S$. The \emph{systole at $p$} of $m$ is the length of the shortest $m$-geodesic loop based at $p$, we denote it by $\sys_p(m)$. It defines a function over the Teichm\"uller space $\teich(S,p)$ of hyperbolic metrics on $S$ up to isotopy relative to $p$. The systole $\sys(m)$ and the maximal injectivity radius $\mradius(m)$ are related to the extremal values of $\sys_p$ on the fiber $\pi_p^{-1}(m)$ of the projection $\pi_p:\teich(S,p)\rightarrow \teich(S)$:
$$\sys(m)=\inf_{\pi_p^{-1}(m)} \sys_p= \inf\{ \sys_p(m')~;~m'\textnormal{ is isotopic to } m\},$$
$$2\mradius(m)=\sup_{\pi_p^{-1}(m)} \sys_p= \sup\{\sys_p(m')~;~m'\textnormal{ is isotopic to } m\}.$$
The second equality is only true when $S$ is closed.
The first equality uses the assumption that $S$ has empty boundary, otherwise one should take care of the distance $d_m(p,\partial S)$ to the boundary when looking at $\mradius$. We stress the obvious equality $\sup_{\teich(S,p)} \sys_p=\sup_{\teich(S)} \mradius$.\par

 The aim of this article is to provide a variational point of view on $\sys_p$ as a function over $\teich(S,p)$. This function is more complicated than the systole. For instance $\sys_p$ admits some degenerate critical points. Let us denote by $\short(S,p)$ the set of points $[m]_p$ in $\teich(S,p)$ such that among the $m$-systolic loops at $p$ (\emph{i.e.} geodesic loops based at $p$ of length $\sys_p(m)$) there is at least one closed geodesic.

\begin{theorem}
The set of degenerate critical points of $\sys_p$ is $\short(S,p)$. In particular the restriction of $\sys_p$ to the dense open subset $\teich(S,p)-\short(S,p)$ is a topological Morse function. Moreover, a point $[m]_p\in \teich(S,p)-\short(S,p)$ is critical of index $k$ if and only if it has exactly $k$ systolic loops based at $p$ that decompose $(S,m)$ into regular polygons.
\end{theorem}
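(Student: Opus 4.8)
The plan is to bring $\sys_p$ into the scope of Akrout's theory of generalized systoles. Writing $\sys_p=\min_\gamma \ell_\gamma$, where $\gamma$ ranges over the homotopy classes rel $p$ of loops based at $p$ and $\ell_\gamma$ denotes the length of the corresponding geodesic loop, I would first fix Bonahon's shearing coordinates on $\teich(S,p)$. In these coordinates each $\ell_\gamma$ is a smooth function, and only finitely many classes realize or nearly realize $\sys_p$ near a given metric (the loop-length spectrum being discrete and proper), so that $\sys_p$ is locally the minimum of finitely many smooth functions. The systolic loops split into two types: those with a genuine corner at $p$, and those passing smoothly through $p$, i.e.\ closed geodesics; the latter are exactly the loops responsible for membership in $\short(S,p)$.

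The decisive input is the explicit Hessian computation announced in the introduction, which I would record as a dichotomy: in shearing coordinates $\hess \ell_\gamma$ is positive definite when $\gamma$ has a corner at $p$, whereas for a closed geodesic through $p$ it is only positive semidefinite, its kernel being the one-dimensional direction that slides $p$ along $\gamma$ (along which $\ell_\gamma$ is constant, the underlying closed geodesic being unchanged). This is what isolates $\short(S,p)$: off $\short(S,p)$ every active length function satisfies the positivity hypothesis of Akrout's theorem, while on $\short(S,p)$ at least one does not.

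On the open set $\teich(S,p)-\short(S,p)$, Akrout's theorem then applies verbatim: $\sys_p$ is a topological Morse function, a point is critical precisely when it is eutactic, i.e.\ when $0$ lies in the convex hull of the gradients $\{\grad \ell_\gamma\}$ of the systolic loops, and each critical point is nondegenerate with index prescribed by this configuration. The geometric heart of the argument is to interpret eutaxy: using the first-variation formula, $\grad \ell_\gamma$ is governed by the two tangent directions of $\gamma$ at $p$ together with the shears of $\gamma$ across the coordinate triangulation, and I would show that the balancing relation $\sum_\gamma \lambda_\gamma \grad \ell_\gamma=0$ with $\lambda_\gamma>0$ forces the systolic loops to leave $p$ at equal angles and to cut $(S,m)$ into regular polygons, while conversely such a configuration is manifestly eutactic by symmetry. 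Transcribing the index supplied by Akrout's theorem into the number of systolic loops is then carried out from the explicit form of these gradients and the combinatorics of the polygonal decomposition.

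Finally I would identify the degenerate critical locus with $\short(S,p)$. One inclusion is free: off $\short(S,p)$ every critical point is nondegenerate, so every degenerate critical point lies in $\short(S,p)$. For the reverse inclusion I would show that each point of $\short(S,p)$ is a critical point, the systolic closed geodesic contributing through the balanced pair of opposite directions in which it meets $p$, and that it is genuinely degenerate: along the slide direction in the kernel of $\hess \ell_\gamma$ the closed geodesic remains the minimizing loop (its length being flat while the neighboring corner loops curve up), so $\sys_p$ is itself flat along that direction, which is incompatible with a nondegenerate critical point. I expect the two analytic cruxes to be, first, the translation of eutaxy into the regular-polygon condition, and second, this last verification on $\short(S,p)$ — establishing criticality for every such point and checking that the slide degeneracy is not repaired by the positive definite Hessians of the remaining systolic loops; once the Hessian computation is in place, everything else is a transcription of Akrout's theorem.
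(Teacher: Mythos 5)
Your overall architecture is the paper's: write $\sys_p=\inf_\gamma\ell_\gamma$, prove in shearing coordinates that $\hess\,\ell_\gamma$ is positive definite unless the loop in class $\gamma$ is a closed geodesic through $p$ (with kernel the slide direction), apply Akrout's theorem on $\teich(S,p)-\short(S,p)$, and translate eutaxy into the regular-polygon condition. But two of your steps do not go through as described. The serious one is the claim that every point of $\short(S,p)$ is critical because the closed geodesic contributes ``a balanced pair of opposite directions'' at $p$. Those opposite tangent vectors only annihilate the component of $\mrm d\ell_\gamma$ along the fiber $\tilde S$ of $\teich(S,p)\rightarrow\teich(S)$; the component along $\teich(S)$ is the differential of the length of the free homotopy class of $\gamma$, which is nonzero. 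Hence $\mrm d\ell_\gamma\neq 0$, and at a point whose \emph{unique} systolic loop is a closed geodesic through $p$ the function $\sys_p$ coincides locally with the single smooth function $\ell_\gamma$, so your mechanism produces a \emph{regular} point, not a critical one; flatness of $\sys_p$ along the slide direction is perfectly compatible with regularity (a nonzero linear form is flat along its kernel). The paper does not argue pointwise criticality at all: it shows the set of degenerate points is closed and contains a dense subset of $\short(S,p)$, via a dimension count on the sublevel set $\{\sys_p\leq\sys_p(m)\}$ and the observation that such critical points are not isolated along the geodesic. Your argument does not substitute for that step, and without it the inclusion $\short(S,p)\subset\{\text{degenerate critical points}\}$ is unproved.

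The second gap is in the eutaxy dictionary. A decomposition into regular polygons generally mixes polygons with different numbers of sides, hence different interior angles, and admits no isometries; so the configuration is neither ``manifestly eutactic by symmetry'' (Bavard's fixed-point criterion is unavailable) nor one with equal angles at $p$. The paper's actual mechanism is the identity, valid on length-regular polygons, expressing the differential of the total angle as a \emph{negative} multiple of the differential of the total side length; summing over the polygons and using that the angles at $p$ always add up to $2\pi$ produces the eutaxy relation with explicit positive coefficients. For the converse (critical $\Rightarrow$ regular polygons) the paper first rules out non-filling systolic configurations by twisting along a disjoint curve, then uses that a eutactic point is a strict minimum of $\sys_p$ in its minimal class together with an area-preserving deformation of any non-length-regular polygon. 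These are concrete computations your sketch would need to supply; ``forces'' and ``by symmetry'' do not stand in for them.
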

 
 Actually we prove this theorem for any surface $S$ of finite type with negative Euler characteristic. When $S$ has nonempty boundary, the definition of the Teichm\"uller space is the same with the additional condition that the length of each boundary component is fixed.\par
 We say that a point is \emph{extreme} for $\sys_p$ if it realizes a local maximum of $\sys_p$. The extreme points are exactly the critical points of maximal index, the theorem above implies directly:

\begin{theorem}
Let $S$ be a compact surface with $k\geq 0$ boundary components $b_1,\ldots, b_k$. A point $[m]\in\teich(S,p)$ is extreme for $\sys_p$ if and only if its systolic loops at $p$ divide $S$ into equilateral triangles and one holed monogons. In that case $\sys_p(m)$ is the unique positive solution of the equation
$$6(-2\chi(S)+2-k) \mathrm{arcsin}\left(\frac{1}{2\cosh(x/2)} \right)+ 2\sum_{i=1}^k \mrm{arcsin}\left(\frac{\cosh(\ell(b_i)/2)}{\cosh(x/2)} \right)~ =~2\pi. $$
We allow the length of a boundary component to be zero (the boundary component has to be replaced by a cusp). 
\end{theorem}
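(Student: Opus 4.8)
The plan is to read off both assertions from the previous theorem. By definition the extreme points are the local maxima of $\sysp$, that is its critical points of maximal index; and by the previous theorem a critical point in $\teich(S,p)-\short(S,p)$ has index equal to the number of its systolic loops, these loops cutting $S$ into regular polygons. Hence the extreme points are exactly those carrying the greatest possible number of systolic loops. The proof therefore splits into a combinatorial part, maximizing the number of loops to pin down the decomposition, and a metric part, where I evaluate $\sysp$ at such a point by writing that the face angles at $p$ sum to $2\pi$.

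For the combinatorial part I record the cell structure cut out by the loops. As all loops are based at the single point $p$, their union is a graph with $V=1$ vertex; write $E$ for the number of loops. Capping every boundary component with a disc yields a closed surface $\hat S$ with $\chi(\hat S)=\chi(S)+k$, in which the interior faces are regular $c_j$-gons ($c_j\geq 3$) and each $b_i$ sits inside a one-holed $m_i$-gon ($m_i\geq 1$). Euler's formula on $\hat S$ gives $1-E+F=\chi(S)+k$, and counting the corners of the faces around $p$, of which there are $2E$ since each loop has both its endpoints at $p$, gives $\sum_j c_j+\sum_i m_i=2E$. Combining these relations with $c_j\geq 3$ and $m_i\geq 1$ produces the inequality $E\leq -3\chi(S)+3-k$, with equality if and only if every interior face is a triangle and every boundary face is a one-holed monogon. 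This is precisely the first assertion, and at equality the number of triangles is $T=-2\chi(S)+2-k$.

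It remains to compute $x=\sysp(m)$ from the angle condition at $p$. For an equilateral triangle of side $x$ the law of cosines gives each angle $\alpha$ with $\sin(\alpha/2)=\tfrac{1}{2\cosh(x/2)}$, so $\alpha=2\arcsin\!\big(\tfrac{1}{2\cosh(x/2)}\big)$. The real work is the interior angle $\beta_i$ at $p$ of a one-holed monogon whose loop has length $x$ and whose boundary geodesic has length $L_i=\ell(b_i)$. I would pass to the universal cover: the loop is freely homotopic to $b_i$, so its holonomy is the translation $g$ of length $L_i$ along the lift $\tilde b_i$, and the loop lifts to the geodesic segment $[\tilde p,g\tilde p]$ of length $x$. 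Dropping the perpendicular of length $d$ from $\tilde p$ to $\tilde b_i$ realizes $\tilde p,q,gq,g\tilde p$ as a Saccheri quadrilateral, whence $\sinh(x/2)=\cosh(d)\sinh(L_i/2)$; a short computation in the upper half-plane, with $\tilde b_i$ on the imaginary axis and $g\colon z\mapsto e^{L_i}z$, then gives $\sin(\beta_i/2)=\tfrac{\cosh(L_i/2)}{\cosh(x/2)}$, i.e. $\beta_i=2\arcsin\!\big(\tfrac{\cosh(L_i/2)}{\cosh(x/2)}\big)$. This monogon angle is the one genuinely nonroutine computation; everything else is bookkeeping.

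Summing the $3T$ triangle angles and the $k$ monogon angles and equating the total to $2\pi$ gives, after substituting $T=-2\chi(S)+2-k$, exactly the displayed equation. For the uniqueness clause I note that on the admissible range $x\geq\max_i\ell(b_i)$ the left-hand side is a positive combination of terms $\arcsin(\cdot/\cosh(x/2))$, each strictly decreasing in $x$ and tending to $0$ as $x\to\infty$; so the value $2\pi$ is attained at most once, while existence of a solution is guaranteed by the existence of extreme points (alternatively, by gluing $T$ equilateral triangles and $k$ one-holed monogons of loop length $x$ around $p$). Finally, the limiting convention $\ell(b_i)=0$ is incorporated by setting $L_i=0$ in $\beta_i$, which turns the corresponding boundary geodesic into a cusp.
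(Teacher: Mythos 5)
Your proof is correct and follows essentially the same route as the paper: an extreme point is forced to carry the maximal number $-3\chi(S)+3-k$ of systolic loops (you obtain this from the Morse-index classification where the paper invokes perfection via Corollary~\ref{cor:voronoi}, but these are equivalent here), the Euler-characteristic count then forces the decomposition into equilateral triangles and one-holed monogons, and the displayed equation is the angle condition at $p$. The only dependency you leave implicit is that an extreme point must first be known to lie outside $\short(S,p)$ for the index theorem to apply --- the paper establishes this separately --- while your explicit computation of the triangle angle $2\arcsin\bigl(\tfrac{1}{2\cosh(x/2)}\bigr)$ and of the monogon angle $2\arcsin\bigl(\tfrac{\cosh(\ell(b_i)/2)}{\cosh(x/2)}\bigr)$ via the Saccheri quadrilateral supplies exactly the trigonometry the paper leaves to the reader.
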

 
 This theorem generalizes the results of Bavard and Deblois we mentioned above. Indeed we allow the surface $S$ to have nonempty boundary and to be nonorientable. The works of Bavard and Deblois are based on some decompositions of a hyperbolic surfaces with respect to a point (Vorono\"i decomposition for Bavard, centered dual decomposition for Deblois).\par

 Our approach is radically different. We look at $\sys_p$ as the infimum of the length functions of geodesic loops based at $p$:
$$\sys_p =\inf_{\g\in\pi_1(S,p)} \ell_\g .$$ 
This makes $\sys_p$ a \emph{generalized systole}, that is a function which is locally the minimum of a finite number of \emph{differentiable} functions. Although $\sys_p$ is only continuous, we can use the differentiability of the $\ell_\g$'s to characterize its critical points.\par

 The notion of generalized systole was introduced by Bavard (\cite{bavard-crelle}) who developed a rich variational theory (called \emph{geometric Vorono\"i theory}) for their study ((\cite{bavard-crelle,bavard-smf}). The most important theorems of this theory require some additional \emph{convexity} assumptions on the $\ell_\g$'s. For instance, if the Hessians of the $\ell_\g$'s are positive definite on some open subset $U\subset \teich(S,p)$, then a theorem of Akrout (\cite{akrout}) asserts that $\sys_p$ is a topological Morse function on $U$, and a theorem of Bavard characterizes the extreme points of $\sys_p$ in $U$.\par

 The main difficulty in our approach is to establish such a positivity property on the Hessians of the $\ell_\g$'s.
As it is interesting in itself, we compute explicitly the Hessian of $\ell_\g$ in the shearing coordinates, we show that is positive-semidefinite, and we determine its isotropic cone. This is a generalization of our previous work \cite{gendulphe-shearing}, where we computed the Hessian in the shearing coordinates of the length functions of geodesics over $\teich(S)$. Note that dealing with \emph{pointed} hyperbolic surfaces complicate the situation, because there are no simple system of coordinates on $\teich(S,p)$ like Fenchel-Nielsen coordinates. That's why we use the shearing coordinates, let us explain this point quickly. A maximal geodesic lamination $\l$ divide a hyperbolic surface $(S,m)$ into ideal triangles, and we can reach any other metric $m'$ on $S$ (up to isotopy) by shifting the triangles with respect to each other. The amount of shifting is the \emph{shearing coordinate} of $[m']\in \teich(S)$. As there is a unique ideal triangle up to isometry, we can fix a point on $(S-\l,m)$ and follow it  when shifting the triangles. This gives a system of coordinates on the open subset of $\teich(S,p)$ that corresponds to points $[m]_p$ such that $p$ is not on $\l$. Another advantage of the shearing coordinates is that any linear condition on the lengths of the boundary components define an affine subspace of $\teich(S)$, thus the positivity property of the Hessians of length functions remains true on this affine subspace (the result of Wolpert \cite{wolpert} does not apply for surfaces with fixed boundary lengths).\par

 Once this positivity property is established, we easily find the extreme points of $\sys_p$. Indeed, according to a theorem of Bavard (Theorem~\ref{thm:voronoi}) an extreme point $[m]_p$ has at least $\dim \teich(S,p)+1$ systolic loops at $p$. But, a simple computation of Euler characteristic shows that this is the maximal number of disjoint non homotopic loops based at $p$. We conclude that the systolic loops at $p$ decompose $(S,m)$ as in the theorem above, and one easily obtains the equation giving the global maximum of $\sys_p$. Note that in this approach we do not need to distinguish cases (closed or not, compact or not, with or without boundary, orientable or not).\par
 
 We also determine the other critical points, which have been characterized by Akrout: a point $[m]_p$ is critical if the family of differentials $\mrm d\ell_\g(m)$ of the length functions of its systolic loops satisfy a particular configuration (said \emph{eutactic}) in the cotangent space of . To work with the differentials of the length functions of geodesic loops, we consider a set of homotopy classes $\pi_1(S,p)$ that decomposes $S$ into polygons, then we embed $\teich(S,p)$ into the product of the corresponding spaces of polygons.  The trigonometric relations between the sides and angles give relations between the differentials of the length functions. Without entering into details, let us say that the choice of a set of homotopy classes corresponds to the choice of a \emph{minimal class} (a useful notion when studying generalized systoles).\par
 
 The same approach applies to another metric invariant: the \emph{systole of arcs}. Let us introduce some notations. We assume that $S$ has $k\geq 1$ boundary components, and we fix a nonempty set $B=\{b_1,\ldots, b_l\}$ of boundary components. We denote by $\teich(S,B)$ the Teichm\"uller space of hyperbolic metrics such that the sum $\ell(B)=\ell(b_1)+\ldots+\ell(b_l)$ is fixed, as are the lengths $\ell(b_{l+1}),\ldots,\ell(b_k)$. We denote by $\arcs(B)$ the set of isotopy classes of essential arcs whose endpoints belong to some boundary components in $B$. We denote by $\sys_B$ the length of the shortest essential arc whose isotopy class belongs to $\arcs(B)$.
 
\begin{theorem}
The function $\sys_B$ is a topological Morse function over $\teich(S,B)$.
 The critical points for $\sys_B$ are in bijection with the systems of arcs in $\arcs(B)$ that decompose $S$ into polygons. More precisely, to any such arc system $\{\a_1,\ldots,\a_n\}$ corresponds the unique point $[m]\in\teich(S,B)$ such that the arcs realizing $\sys_B$ belong to $\a_1,\ldots,\a_n$ and decompose $(S,m)$ into semi-regular right-angled polygons (a polygon is semi-regular if its lengths alternate between two values).
\end{theorem}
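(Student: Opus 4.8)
The plan is to treat $\sys_B$ exactly as we treat $\sys_p$: as a \emph{generalized systole} fed into Bavard's geometric Vorono\"i machinery. Writing $\sys_B=\inf_{\a\in\arcs(B)}\ell_\a$, where $\ell_\a$ denotes the length of the orthogeodesic representative of the class $\a$, exhibits $\sys_B$ as the infimum of a locally finite family of differentiable functions on $\teich(S,B)$. The variational framework of Akrout and Bavard then applies as soon as we control the convexity of the $\ell_\a$, so the entire argument reduces to establishing a positivity property for their Hessians and then reading off the critical points.

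First I would establish the positivity of the Hessians of the arc-length functions in Bonahon's shearing coordinates, in the same spirit as the loop computation carried out for $\sys_p$. Fixing a maximal geodesic lamination $\l$ cutting $S$ into ideal triangles, I would express $\ell_\a$ in terms of the shears, compute $\hess\ell_\a$ explicitly, show it is positive-semidefinite, and identify its isotropic cone. The crucial point, and the place where the arc case is genuinely better behaved than the loop case, is that an arc has both endpoints on the boundary, so the degeneracy responsible for the set $\short(S,p)$ cannot occur: I expect $\hess\ell_\a$ to become positive definite after restriction to the affine subspace of $\teich(S)$ cut out by the linear boundary-length constraints defining $\teich(S,B)$ (using, as in the introduction, that such constraints are affine in shearing coordinates, so positivity survives on the subspace). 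Verifying that the isotropic cone meets this subspace trivially is the main obstacle and the bulk of the technical work.

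With positive definiteness in hand, Akrout's theorem immediately yields that $\sys_B$ is a topological Morse function on all of $\teich(S,B)$, with \emph{no} degenerate critical points, and that the index at a critical point equals the number of systolic arcs there. To locate and classify the critical points I would invoke Akrout's \emph{eutaxy} criterion: $[m]$ is critical of index $k$ exactly when the differentials $\diff\ell_\a(m)$ of the systolic arcs form a eutactic family spanning a $k$-dimensional subspace. For the extreme (maximal-index) points, Bavard's Theorem~\ref{thm:voronoi} forces at least $\dim\teich(S,B)+1$ systolic arcs; an Euler-characteristic count then shows this is exactly the maximal number of disjoint, pairwise non-isotopic essential arcs in $\arcs(B)$, and that such a maximal family decomposes $S$ into polygons. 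More generally, any critical configuration of systolic arcs can be realized disjointly and must itself cut $S$ into polygons, which produces the announced bijection between critical points and arc systems decomposing $S$.

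Finally I would pin down the geometry at a critical point. Cutting $(S,m)$ along the systolic arcs produces \emph{right-angled} polygons, since the arcs are orthogeodesics meeting the boundary perpendicularly; the eutactic condition, translated through the trigonometric relations between the sides of these polygons (via the same polygon-space embedding of $\teich(S,B)$ used for $\sys_p$), forces the side lengths to alternate between two values, i.e. the polygons are \emph{semi-regular}. Conversely, given a combinatorial arc system in $\arcs(B)$ decomposing $S$ into polygons, the semi-regular right-angled condition together with the gluing equations determines the metric uniquely, and one checks directly that the resulting point is critical. This establishes both the bijection and the metric description, completing the proof.
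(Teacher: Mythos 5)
Your overall strategy coincides with the paper's: exhibit $\sys_B$ as a generalized systole, verify Bavard's condition $(H)$ via Hessian positivity in shearing coordinates (the paper gets this for free from Theorem~\ref{thm:shearing} applied to the double of $S$, rather than by a fresh computation for arcs, but your route would work), invoke Akrout's theorem, and classify eutactic points through the embedding of $\teich(S,B)$ into a product of spaces of right-angled polygons. However, two steps that carry real weight in the paper are asserted rather than proved in your sketch. First, you claim that the eutactic condition, ``translated through the trigonometric relations,'' \emph{forces} the polygons to be semi-regular. That is not how the implication is obtained, and it is not clear it can be obtained that way: the key identity --- that $\sum_j \diff\ell_{i,j}$ over odd $j$ is a negative multiple of $\sum_j \diff\ell_{i,j}$ over even $j$ --- is only established \emph{at} semi-regular polygons, so it proves that the semi-regular point is eutactic (because $\diff\B\equiv 0$ on $T\teich(S,B)$ while $\diff\B$ is a strictly negative combination of the arc-length differentials there), not that eutaxy excludes non-semi-regular configurations. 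The paper closes the loop differently: it proves \emph{existence} of a eutactic point with semi-regular polygons in each minimal class, and separately proves \emph{uniqueness} of eutactic points per minimal class via strict convexity (Lemma~\ref{lem:eutactic-unicity}); you need one of these two mechanisms.

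Second, having built the metric $[m]$ in which the chosen arc system bounds semi-regular right-angled polygons, you must verify that these arcs are actually the \emph{systolic} arcs of $m$ --- otherwise $\Sys([m])$ is not the family you think it is and the eutaxy computation applies to the wrong set of differentials. This is a genuine piece of work (the paper estimates, via trirectangle trigonometry, the distance between any two non-adjacent sides of a semi-regular polygon and shows it exceeds the arc length), and ``one checks directly that the resulting point is critical'' hides it. Two smaller points: the claim that a eutactic point's systolic arcs must cut $S$ into polygons also needs its own argument (the paper twists along a disjoint simple closed geodesic and contradicts Lemma~\ref{lem:eutactic-minimum}); and the index at a critical point is the \emph{rank} of the family of differentials, which is the number of systolic arcs minus one (there is one relation coming from the fixed total boundary length), not the number of arcs itself.
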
 
 
As a direct consequence we get  

\begin{theorem}
A point $[m]\in\teich(S,B)$ is extreme if and only if the systolic arcs of $m$ divide $S$ into right-angled hexagons and right-angled bigons containing a boundary component which is not in $B$. Moreover $\sys_B(m)$ is the unique solution of the equation
$$6[-2\chi(S)-(k-l)] \mrm{arcsinh}\left(\frac{1}{2\sinh x}\right) + 2\sum_{i=l+1,\ldots,k} \mrm{arcsinh}\left(\frac{\cosh(b_i/2)}{\sinh(x/2)}\right)=\ell(B).$$ 
In particular, extreme points of $\sys_B$ are in bijection with maximal system of arcs in $\arcs(B)$.
\end{theorem}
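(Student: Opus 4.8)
The plan is to derive this statement as a direct consequence of the preceding Morse-theoretic classification of the critical points of $\sys_B$, in complete parallel with the treatment of $\sys_p$. By definition an extreme point is a local maximum, hence a critical point of maximal index $\dim\teich(S,B)$. First I would apply Bavard's Theorem of Vorono\"i (Theorem~\ref{thm:voronoi}): at a local maximum the differentials $\mrm d\ell_\alpha$ of the length functions of the systolic arcs must positively span the cotangent space, so there are at least $\dim\teich(S,B)+1$ systolic arcs. Moreover two distinct systolic arcs cannot cross, since a surgery at an interior intersection point would yield a strictly shorter essential arc of $\arcs(B)$; thus the systolic arcs form a family of pairwise disjoint, pairwise non-isotopic essential arcs.

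The second step bounds their number from above. Being disjoint, non-isotopic and essential, the systolic arcs cannot exceed the size of a maximal arc system of $\arcs(B)$, which a routine Euler characteristic computation shows to be $-3\chi(S)-(k-l)=\dim\teich(S,B)+1$: cutting $S$ along a family of $n$ such arcs raises the Euler characteristic by $n$, the complementary disks are polygons whose sides alternate between arcs and boundary segments with at least three arc-sides each, and the $k-l$ boundary components outside $B$ (carrying no arc endpoint) sit in non-disk pieces, leaving at maximality only right-angled hexagons and one once-holed bigon around each $b_i$ with $l<i\le k$. Comparing with the lower bound from Theorem~\ref{thm:voronoi} forces equality: the systolic arcs form a maximal system in $\arcs(B)$, cutting $(S,m)$ into $-2\chi(S)-(k-l)$ hexagons and $k-l$ bigons.

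With the combinatorial type fixed, I would appeal to the previous theorem, which attaches to this maximal arc system a unique critical point, namely the metric for which the systolic arcs decompose $(S,m)$ into semi-regular right-angled polygons. Since the system is maximal these polygons are exactly the hexagons and once-holed bigons above, all of whose arc-sides have the common length $x:=\sys_B(m)$. Conversely, any maximal system of $\arcs(B)$ yields, by the same theorem, a unique critical point; its $\dim\teich(S,B)+1$ systolic arcs have positively spanning differentials, so it is extreme by Theorem~\ref{thm:voronoi}. This gives the claimed bijection between extreme points and maximal arc systems.

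It remains to determine $x$ by a length balance on $B$. The boundary $B$ is partitioned into the boundary segments of the hexagons and bigons, so $\ell(B)$ is their total length. The right-angled hexagon relation applied to the hexagon with arc-sides $x$ gives $\cosh c=\cosh x/(\cosh x-1)$ for each boundary side $c$, that is $c=2\,\mrm{arcsinh}\!\left(1/(2\sinh(x/2))\right)$, so each hexagon contributes $6\,\mrm{arcsinh}(1/(2\sinh(x/2)))$; the same trigonometry around an enclosed $b_i$ furnishes the boundary segment $2\,\mrm{arcsinh}\!\left(\cosh(b_i/2)/\sinh(x/2)\right)$. Summing over the $-2\chi(S)-(k-l)$ hexagons and the $k-l$ bigons produces the displayed equation, whose left-hand side decreases continuously and strictly from $+\infty$ to $0$ as $x$ grows; hence there is a unique positive solution, equal to $\sys_B(m)$. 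The one genuinely delicate input, the positive-definiteness of the Hessians of the $\ell_\alpha$ underlying both Theorem~\ref{thm:voronoi} and the previous theorem, is already in hand; the remaining difficulty, which I expect to be the main one here, is the purely topological identification of the complementary regions (and the disjointness of the systolic arcs) needed to make the arc count match $\dim\teich(S,B)+1$ exactly, after which the hexagon and bigon trigonometry is routine.
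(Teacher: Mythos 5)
Your proposal follows essentially the same route as the paper: Corollary~\ref{cor:voronoi} forces at least $\dim\teich(S,B)+1$ systolic arcs, the disjointness and Euler-characteristic lemmas identify the complementary regions as right-angled hexagons and bigons around the components not in $B$, and the hexagon/bigon trigonometry then produces the displayed equation with its unique positive root. Your additional appeal to the critical-point classification for the converse and the bijection, and your identity $\sinh(c/2)=1/(2\sinh(x/2))$ (the internally consistent form of the relation the paper records as $2\sinh(\beta/2)\sinh(\sys_B(m))=1$), are both sound.
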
 
 
 This generalizes an optimal inequality of Bavard (\cite{bavard-manuscripta}) that corresponds to the case in which $B$ is the full set of boundary components. The inequality of Bavard is not very accurate when some boundary components are very short, our theorem works well in that case.\par
 
 As we know all the critical points of the systole of arcs, we get some information on the topology of the moduli  space. Taking $B$ as the full set of boundary components, it comes that $\teich(S,B)$ deforms into a cell complex of dimension the virtual cohomological dimension of the modular group $\Mod(S)$. The virtual cohomological dimension of $\Mod(S)$ has been computed by Harer (\cite{harer}), who already constructed a $\Mod(S)$-invariant deformation retract which is combinatorially equivalent to our cell complex. However, we believe that it is interesting to have another perspective.\par
 
  The proofs of the above theorems follow exactly the same line as for $\sys_p$, but they are more simple as they do not involve Teichm\"uller spaces of pointed surfaces. Therefore we have chosen to treat first the case of $\sys_B$.\par

The paper is organized as follow: in part~1 we set up the notations and definitions, we recall the most important features of Bavard's \emph{geometric Vorono\"i theory}, and we apply it to find the extreme points of $\sys_B$. In part~2, we determine the other critical points of $\sys_B$, this is done through embeddings of $\teich(S,B)$ into products of spaces of right-angled polygons. The part~3 is devoted to the study of $\sys_p$, in particular we compute the Hessian of the length functions of geodesic loops in the shearing coordinates. 

\tableofcontents

%%%%%%%%%%%%%%%%%%%%%%%%%%%%%%%%%%%%
\part{Teichm\"uller spaces and generalized systoles}         % PART : PRELIMINARIES
%%%%%%%%%%%%%%%%%%%%%%%%%%%%%%%%%%%%

%%%%%%%%%%%%%%%%%%%%%%
\section{Teichm\"uller spaces}
%%%%%%%%%%%%%%%%%%%%%%

\subsection{Definitions}%%%%%%%%%%%%%%%%%%%%%%
 Let $S$ be a surface of finite type with negative Euler characteristic. A \emph{hyperbolic metric} on $S$ is a complete metric of constant curvature $-1$. In this article we always assume that the boundary $\partial S$ is geodesic and the area $\area(m)$ finite. We denote by $b_1,\ldots, b_k$ ($k\geq 0$) the boundary components of $S$.\par
 
  The \emph{Teichm\"uller space} of $S$ is the space of hyperbolic metrics on $S$ up to isotopy, we denote it by $\teich(S)$. It is diffeomorphic to a ball of dimension $-3\chi(S)$. The \emph{modular group} $\Mod(S)$ is the group of isotopy classes of diffeomorphisms that preserve each boundary component. When $S$ is orientable we only consider orientation preserving diffeomorphisms. The modular group acts properly and discontinuously on $\teich(S)$, and the quotient is the moduli space of hyperbolic metrics on $S$.\par
  
 When $S$ has nonempty boundary, we can define some other Teichm\"uller spaces by adding conditions on the the lengths of the boundary components. Our results deal with the following kind of Teichm\"uller spaces: let $B=\{b_1,\ldots,b_l\}$ be a set of boundary components of $S$, we denote by $\teich(S,B)$ the space of isotopy classes of hyperbolic metrics on $S$ such that the sum of the lengths of the boundary components in $B$ is fixed, as is the length of any other component. The Teichm\"uller space $\teich(S,B)$ is a submanifold of codimension $(k+1)$ of $\teich(S)$. When $B$ is the full set of boundary components of $S$ then we use the notation $\teich(S,\partial S)$.\par
  
  Given $\g$ a non trivial isotopy class of closed curves on $S$, we denote by $\ell_\g:\teich(S)\rightarrow \R_+^\ast$ the smooth function that associates to a point $[m]\in\teich(S)$ the length $\ell_\g(m)$ of the unique $m$-geodesic in $\g$. Similarly, an isotopy class $\a$ of essential arcs on $S$ defines a smooth function $\ell_\a:\teich(S)\rightarrow \R_+^\ast$ that associates to $[m]\in\teich(S)$ the length of the unique $m$-geodesic arc in $\a$ orthogonal to $\partial S$.\par
   We recall that an \emph{arc} is a smooth embedding $(I,\partial I)\rightarrow (S,\partial S)$ transverse to $\partial S$, and an arc is \emph{essential} if it can not be deformed into $\partial S$. In the sequel we only consider isotopy classes of essential arcs.\par
    
\subsection{Shearing coordinates and Hessian of length functions}%%%%%%%%%%%%%%%%%%%%%%
Let $\l$ be a maximal geodesic lamination on $S$, it necessarily contains the boundary components of $S$. For any hyperbolic metric $m$ on $S$ the complement $\tilde S-\tilde \l$ is a disjoint union of ideal triangles. The way the triangles are glued together is encoded by a transverse H\"older distribution for $\l$, and this defines a smooth embedding of $\teich(S)$ into the linear space $\Hold$ of transverse H\"older distribution for $\l$. The image of this embedding is an open convex cone. We call shearing coordinates for $\teich(S)$ any linear system of coordinates for $\Hold$.\par
 
 In this article we will not manipulate directly transverse H\"older distributions. So we refer to the original works \cite{bonahon-topology,bonahon-toulouse} of Bonahon for a complete treatment of the subject. For our purpose, the short exposition we made in \cite[\textsection 2]{gendulphe-shearing} would be largely sufficient.\par
  We need the shearing coordinates in order to establish convexity properties of the length functions of geodesics or arcs. As explained in the introduction these properties are crucial in our study. In \cite{gendulphe-shearing} we proved the following theorem:
 
\begin{theorem}\label{thm:shearing}
Let $\g$ be an isotopy class of closed curves that intersect every leaf of $\l$. The Hessian of $\ell_\g$ in the shearing coordinates is positive-definite over $\teich(S)$.
\end{theorem}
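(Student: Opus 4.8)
The plan is to make $\ell_\gamma$ an explicit function of the shearing coordinates through the holonomy representation, and then to reduce positive-definiteness of its Hessian to a convexity statement that can be tested along each line. Fix the maximal lamination $\lambda$ and the decomposition of $(\tilde S,\tilde\lambda)$ into ideal triangles. Because $\gamma$ meets every leaf of $\lambda$, a lift of $\gamma$ to the universal cover crosses a bi-infinite, $\langle\gamma\rangle$-periodic chain of ideal triangles, so the holonomy $\rho_m(\gamma)\in\PSL(2,\R)$ is an ordered product of two kinds of factors: a shear $E(t)=\exp(tX)$ across each leaf that $\gamma$ crosses (here $X$ is the fixed infinitesimal shear and $t$ the corresponding shearing coordinate), and a fixed ``turning'' matrix recording how $\gamma$ traverses each triangle. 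From $2\cosh(\ell_\gamma/2)=|\trace\rho_m(\gamma)|$ one reads off $\ell_\gamma$, and hence its Hessian, as an explicit expression in the coordinates.

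First I would compute the first variation. Differentiating the product in a coordinate direction $v$ inserts the conjugated generators $\mathrm{Ad}(A_k)X$ (with $A_k$ the partial product up to the $k$-th crossing) into $\rho_m(\gamma)$; taking traces expresses $\diff\ell_\gamma(v)$ as a sum over the crossings of $\gamma$ with $\lambda$, weighted by $\cos\theta$ at a crossing of angle $\theta$. Geometrically this is the Kerckhoff--Wolpert cosine formula for the derivative of length under the infinitesimal earthquake determined by $v$. Differentiating once more gives the Hessian as diagonal contributions from each crossing plus cross-terms between pairs of crossings. Organising the trace as $\trace\rho_m(\gamma)=\sum_j N_j\,\exp(L_j/2)$ with $L_j$ linear forms in the coordinates and, by positivity of Fuchsian holonomies, $N_j>0$, one finds that $u:=\cosh(\ell_\gamma/2)$ is a positive combination of exponentials of linear forms, hence convex; but this convexity alone is \emph{not} what is needed.

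The heart of the matter is the passage from convexity of $u$ to positive-definiteness of the Hessian of $\ell_\gamma=2\,\mathrm{arccosh}(u)$, which is strictly stronger. Since $u>1$, for a direction $v$ one has $\mathrm{Hess}\,\ell_\gamma(v,v)>0$ if and only if $(u^2-1)\,\mathrm{Hess}\,u(v,v)>u\,(\diff u(v))^2$. Writing $u=\sum_j w_j$ with $w_j=\tfrac12 N_j\exp(L_j/2)>0$ and $p_j:=L_j(v)$, a direct computation turns this into the requirement that the quadratic form $v\mapsto \big(\sum_j w_j\big)\sum_{j<k} w_j w_k (p_j-p_k)^2-\sum_j w_j p_j^2$ be positive. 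I expect this to be the main obstacle, and the point where the hypothesis on $\gamma$ enters decisively: if some $v\neq0$ made all $p_j$ equal, the first term would vanish while the second stays positive, so the form would fail to be definite. The transversality assumption, that $\gamma$ crosses every leaf of $\lambda$, is exactly what forces the differences $L_j-L_k$ to span the dual space and, more than that, to be spread out enough that the above form is strictly positive; this is where the explicit geometric computation pays off, since grouping the second-variation terms along $\gamma$ so that the cross-terms telescope should exhibit the Hessian as a sum of squares with strictly positive coefficients, making positive-definiteness manifest and pinning down that the isotropic cone is trivial.

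Finally I would reduce a general, possibly irrational, maximal lamination to this combinatorial picture. The first- and second-variation formulas are continuous and linear in the transverse H\"older distribution, so the definiteness established on the subcone of $\Hold$ coming from finite ideal triangulations propagates by continuity; and the constraint of fixed boundary lengths merely restricts the whole discussion to an affine subspace of $\Hold$, on which the quadratic form above remains positive-definite.
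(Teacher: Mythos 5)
Your reduction is clean up to the point where the real work begins, and then it stops. Writing $u=\cosh(\ell_\g/2)=\sum_j w_j$ with $w_j=\tfrac12 N_j\exp(L_j/2)$ and $p_j=L_j(v)$, you correctly observe that positive-definiteness of $\hess\ell_\g$ is equivalent to
$$\Big(\sum_j w_j\Big)\sum_{j<k}w_jw_k(p_j-p_k)^2\;>\;\sum_j w_jp_j^2\qquad(v\neq 0),$$
and you correctly note that this is strictly stronger than convexity of $u$ and fails for arbitrary tuples $(p_j)$ (e.g.\ all $p_j$ equal and nonzero makes the left side vanish). But you then assert that the hypothesis that $\g$ meets every leaf forces the exponents $L_j$ to be ``spread out enough'', and that a telescoping regrouping ``should'' exhibit a sum of squares with positive coefficients. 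That assertion \emph{is} the theorem, and no argument is offered for it: nothing in your proposal controls the coefficients $N_j$ or the geometry of the $L_j$ well enough to compare the two sides of the inequality, and spanning of the differences $L_j-L_k$ alone does not suffice since the form is a difference of two semidefinite forms whose relative size depends on the point of $\teich(S)$. The paper's route (carried out for the more general function $\Dcal$ in Part~3, and in the cited earlier work for $\ell_\g$) sidesteps this entirely: it computes the second variation geometrically, shear by shear, obtaining $\partial^2_{[m]}\Dcal(\a^2)=\frac{1}{\sinh \ell_{pq}}\iint\cosh\ell_{p\{l,h\}}\cosh\ell_{\{l,h\}q}\sin\theta_l\sin\theta_h\,\diff\a(l)\,\diff\a(h)$, whose matrix on Dirac approximations has a strictly dominant diagonal and is therefore positive-definite by an elementary linear-algebra lemma.

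A second, independent problem is your final limiting step. Positive-definiteness is an open condition, not a closed one, so it does \emph{not} ``propagate by continuity'' from finite collections of leaves to a general transverse H\"older distribution: a limit of positive-definite quadratic forms need only be positive-semidefinite. To pass to an irrational maximal lamination you need a lower bound on the Hessian that is \emph{uniform} along the approximating sequence of Dirac combinations; this is exactly what the paper arranges by splitting its matrix as $H=H'+D$, with $H'$ diagonally dominant (hence semidefinite) and $D$ diagonal with entries bounded below by quantities depending only on $p$, $q$ and $\tilde\l$, not on the approximation. Without such a uniform estimate your last paragraph does not close the argument even if the finite-leaf case were established.
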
 

 To simplify the presentation we proved this theorem only in the case of closed orientable surfaces, but the proofs work for all the cases we considered. Alternatively one can consider the double of a surface which has nonempty boundary of is orientable.

The length functions of the boundary components are linear functions in the shearing coordinates (see \cite{bonahon-toulouse}). Thus, in the shearing coordinates, the Teichm\"uller $\teich(S,B)$ is the intersection of $\teich(S)$ with a linear subspace. We deduce that the theorem above is still true for this kind of Teichm\"uller spaces.

%%%%%%%%%%%%%%%%%%%%%%
\section{Geometric Vorono\"i theory}
%%%%%%%%%%%%%%%%%%%%%%

In this section we recall the main features of the \emph{geometric Vorono\"i theory} developed by Bavard in \cite{bavard-crelle,bavard-smf}.
The aim of this theory is to study the variational properties of the \emph{generalized systoles} defined below.\par

Without entering into details, let us say that the classical example of a generalized systole is the \emph{Hermite invariant} that defines a $\SL(n,\Z)$-invariant function over the symmetric space $\SL(n,\R)/\SO(n,\R)$. We refer to the work of Bavard \cite{bavard-crelle} for a detailed description of the analogy between the Hermite invariant as a function over $\SL(n,\R)/\SO(n,\R)$ and the systole as a function over $\teich(S)$.

 \subsection{Generalized systoles}%%%%%%%%%%%%%%%%%
Let $V$ be a manifold, $C$ be a set, and $\{f_s:V\rightarrow \R~;~s\in C\}$ be a family of functions of class $\mc C^1$ indexed by $C$. We say that $(f_s)_{s\in C}$ is a \emph{system of length functions} if it satisfies the following condition of local finiteness: \emph{for any $p\in V$ and any $L>0$ there exists a neighborhood $U$ of $p$ such that the set $\{s\in C~;~f_s(p)\leq L\}$ is finite}. Given a system of length functions $(f_s)_{s\in C}$ we are interesting in its infimum 
\begin{eqnarray*}
\mu & = & \inf_{s\in C} f_s.
\end{eqnarray*}
Such a function is called a \emph{generalized systole}. The local finiteness condition implies that $\mu$ is continuous, however it is not differentiable in general.\par

 Let us give some examples of generalized systoles over Teichm\"uller spaces. The first one is of course the systole itself which defines a function $\sys:\teich(S)\rightarrow\R_+^\ast$ that can be written as the minimum of the length functions of geodesics:
\begin{eqnarray*}
\sys & = & \min_{\g} \ell_\g,
\end{eqnarray*}
where $\g$ runs over the set of non trivial isotopy classes of closed curves on $S$. This function is clearly $\Mod(S)$-invariant. When $S$ has empty boundary, the induced function on the moduli space is proper (Mumford compactness theorem), in particular admits a global maximum. \par

 Now let us assume that $S$ has nonempty boundary. The systole of arcs is the minimum of the length functions of the isotopy classes of essential arcs:
\begin{eqnarray*}
\sys_\partial & = & \min_{\a} \ell_\a,
\end{eqnarray*}  
where $\a$ runs over the set of essential isotopy classes of arcs on $S$. The restriction of $\sys_\partial$ to the Teichm\"uller space $\teich(S,\partial S)$ of hyperbolic metrics on $S$ with fixed total boundary length is bounded, and actually admits a global maximum (this is not difficult to see). The systole of arcs is $\Mod(S)$-invariant but not proper (except in the case of the one-holed torus).\par

 We end this paragraph with a function which is not a generalized systole. Let us assume that $S$ is closed. A \emph{pants decomposition} of $S$ is a maximal family $\g=\{\g_1,\ldots,\g_k\}$ of disjoint isotopy classes of essential simple closed curves. We define the length function of $\g$ as the length of its longest component: 
 \begin{eqnarray*}
 \ell_\g & = & \max_{i=1,\ldots,k} \ell_{\g_i}.
 \end{eqnarray*} 
The family of length functions of pants decomposition of $S$ satisfies the local finiteness condition, but its infimum is not a generalized systole. Indeed, the length function of a pants decomposition is not everywhere differentiable. Still, the infimum of length functions of pants decomposition shares many properties with generalized systoles (see \cite{schmutz, gendulphe-annalen}), its global maximum is called the Bers' constant of $S$.

\subsection{Caracterisation of extreme points}%%%%%%%%%%%%%%%%
 We consider a generalized systole $\mu$ associated to a system of length functions $\{f_s:V\rightarrow \R~;~s\in C\}$. For any point $p\in V$ we set 
$$\Sys(p)~=~\{s\in C~;~\mu(p)=f_s(p)\}.$$
By local finiteness, any point $p\in V$ has a neighborhood $U$ on which
$$\mu ~= \min_{s\in \Sys(p)} f_s.$$
The length functions $(f_s)_{s\in \Sys(p)}$ are differentiable, so we expect the behaviour of $\mu$ in a small neighborhood of $p$ to depend on the configuration of the differentials $(\mrm df_s(p))_{s\in \Sys(p)}$ in the cotangent space $T_p^\ast V$. This is exactly the case.\par
 We first introduce the following vectorial properties:

\begin{definition}
Let $E$ be a finite dimensional real vector space, and $\mc F$ be a finite family of vectors of $E$. We denote by $K$ the convex hull of $\mc F$. We say that
\begin{itemize}
\item $\mc F$ is \emph{perfect} if it affinely spans $E$,
\item $\mc F$ is \emph{eutactic} if the affine interior of $K$ contains the origin of $E$. 
\end{itemize}
\end{definition}

Note that these properties are invariant under linear isomorphism. To these properties correspond two kinds of points that are interesting with respect to $\mu$:  

\begin{definition}
We say that a point $p\in V$ is \emph{perfect} (resp. \emph{eutactic}) if the family of differentials $(\mrm df_s(p))_{s\in \Sys(p)}$ is perfect (resp. eutactic) in the cotangent space $T_p^\ast V$
\end{definition}

 We will shortly see that perfection and eutaxy characterize the following points:

\begin{definition}
We say that a point $p\in V$ is \emph{extreme} (resp. \emph{strictly extreme}) if it realizes a local maximum of $\mu$ (resp. a strict local maximum).
\end{definition}

 We do not have enough assumptions on the $(f_s)_{s\in C}$ to obtain interesting results.
So we introduce the following \emph{condition $(H)$}: \emph{any point $p\in V$ has a neighborhood $U$ equipped with a Riemannian metric such that the Hessians of $(f_s)_{s\in \Sys(p)}$ are positive-definite on $U$.} We now state the following general theorem due to Bavard (\cite[Proposition~2.3]{bavard-crelle}): 
 
\begin{theorem}[Bavard]\label{thm:voronoi}
If the condition (H) is satisfied, then a point is extreme if and only if it is perfect and eutactic. Moreover every extreme point is strictly extreme.
\end{theorem}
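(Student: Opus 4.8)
**The plan is to prove this via a local analysis near a candidate extreme point, combining the positive-definiteness of the Hessians (condition $(H)$) with the geometry of the convex hull of the differentials.** The statement is a biconditional plus a strictness claim, so I would split it into the two implications and handle strictness alongside the harder direction. Throughout, fix $p\in V$, work in the neighborhood $U$ supplied by $(H)$, and write $\Sys(p)=\{s_1,\ldots,s_N\}$ so that $\mu=\min_i f_{s_i}$ near $p$. The key reduction is that $p$ is a local maximum of $\mu$ if and only if there is no direction $v\in T_pV$ along which all the $f_{s_i}$ increase to first order, together with a second-order check to rule out degenerate directions; the Hessian positivity is exactly what lets the first-order information control the local behavior.

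For the direction \emph{perfect and eutactic $\Rightarrow$ strictly extreme}, I would argue by contradiction. Suppose eutaxy holds, i.e. the origin lies in the affine interior of the convex hull $K$ of $\{\mathrm df_{s_i}(p)\}$, and suppose $p$ is not a strict local maximum; then there is a sequence $p_n\to p$ with $\mu(p_n)\ge\mu(p)$. Pick a limiting direction $v$ of $(p_n-p)/|p_n-p|$ (in normal coordinates for the metric). Since $\mu(p_n)=\min_i f_{s_i}(p_n)\ge\mu(p)=f_{s_i}(p)$ for the active indices, each $f_{s_i}$ is non-decreasing to first order along $v$, forcing $\mathrm df_{s_i}(p)(v)\ge 0$ for all $i$. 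But eutaxy says $0$ is a positive convex combination $\sum_i\lambda_i\,\mathrm df_{s_i}(p)=0$ with all $\lambda_i>0$ (using that $0$ is in the affine \emph{interior}); pairing with $v$ gives $\sum_i\lambda_i\,\mathrm df_{s_i}(p)(v)=0$ with each term $\ge 0$, hence $\mathrm df_{s_i}(p)(v)=0$ for every $i$. Now perfection enters: the differentials affinely span $T_p^\ast V$, which (together with $0$ being an affine combination) upgrades to their \emph{linearly} spanning $T_p^\ast V$, so $v=0$, contradicting $|v|=1$. This kills first-order escape; for the genuinely second-order part I would then use that along any $v$ with all $\mathrm df_{s_i}(p)(v)=0$ the positive-definite Hessians force each $f_{s_i}$, hence $\mu$, to \emph{strictly decrease}, completing strict extremality.

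For the converse \emph{extreme $\Rightarrow$ perfect and eutactic}, I would prove the contrapositive in two pieces. If eutaxy fails, the origin is not in the affine interior of $K$, so a separating-hyperplane argument produces a vector $v\in T_pV$ with $\mathrm df_{s_i}(p)(v)>0$ for all $i$ (strict separation away from the affine hull, or a supporting functional that is strictly positive on all the differentials); moving from $p$ in the direction $v$ increases every active $f_{s_i}$ to first order, so $\mu$ strictly increases and $p$ is not a local maximum. If instead perfection fails, the affine span of the differentials is a proper subspace, so there is a nonzero $v$ annihilated by all $\mathrm df_{s_i}(p)$ and by the affine relations; this is a direction in which $\mu$ has no first-order behavior, and I would need to exhibit a nearby point with $\mu$ strictly larger—here I would argue that the absence of a spanning (hence constraining) family lets one perturb to increase the minimum, so again $p$ is not extreme. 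The strictness clause is then automatic, since the proof of the first implication already yielded \emph{strict} local maximality.

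\textbf{The main obstacle I anticipate is the second-order step in the first implication}, namely passing cleanly from ``$\mathrm df_{s_i}(p)(v)\ge 0$ for all $i$'' to strict decrease of $\mu$. The first-order argument only rules out directions where some differential is strictly positive; one must control the degenerate cone $\{v:\mathrm df_{s_i}(p)(v)=0\ \forall i\}$, and it is precisely on this cone that the positive-definite Hessians from condition $(H)$ must be invoked to force $\mu$ to drop. Making this uniform along a whole sequence $p_n\to p$ (rather than a single direction) requires a careful Taylor expansion with the Hessian term dominating, so I expect the technical heart of the proof to be a quantitative estimate of the form $\mu(p_n)\le\mu(p)-c\,|p_n-p|^2$ on the degenerate cone, with $c>0$ coming from the minimum eigenvalue of the Hessians over $\Sys(p)$.
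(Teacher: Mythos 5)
The paper does not actually prove this statement; it is quoted from Bavard's Proposition~2.3 in \emph{Crelle}, so your attempt has to stand on its own. Your first implication is in fact complete after the first-order step: once eutaxy forces $\mathrm{d}f_{s_i}(p)(v)=0$ for every active $i$ and perfection forces $v=0$, the contradiction is already there, and no second-order analysis is needed because perfection makes the ``degenerate cone'' trivial. The trouble is that the second-order step you then append is not only superfluous but has the convexity pointing the wrong way: along a direction where the differential of $f_{s_i}$ vanishes, a positive-definite Hessian forces $f_{s_i}$ to strictly \emph{increase}, not decrease, so the estimate $\mu(p_n)\le\mu(p)-c\,|p_n-p|^2$ that you announce as the technical heart of the proof is false as stated --- strict convexity can never push a function below its value in a critical direction. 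Condition $(H)$ plays no role at all in the direction ``perfect and eutactic $\Rightarrow$ strictly extreme''.

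The genuine gap is in the converse, which is exactly where $(H)$ is indispensable and where you never invoke it. If eutaxy fails but $0$ still lies in $K$ (namely on its relative boundary), no functional can satisfy $\mathrm{d}f_{s_i}(p)(v)>0$ for \emph{all} $i$: since $0=\sum_i\lambda_i\,\mathrm{d}f_{s_i}(p)$ for some convex combination, pairing with $v$ shows the values cannot all be strictly positive. The best a supporting hyperplane gives is $\mathrm{d}f_{s_i}(p)(v)\ge 0$ for all $i$ with at least one strict inequality, and the indices with vanishing derivative are precisely the ones that could a priori make $\mu$ drop along $v$. The correct mechanism --- which also repairs your entirely vague treatment of the case where perfection fails, where \emph{all} the derivatives vanish along some $v\neq 0$ --- is this: whenever $v\neq 0$ satisfies $\mathrm{d}f_{s_i}(p)(v)\ge 0$ for every active $i$, condition $(H)$ makes each $t\mapsto f_{s_i}(\exp_p(tv))$ strictly convex with nonnegative derivative at $t=0$, hence strictly increasing for small $t>0$, so $\mu$ strictly increases and $p$ is not a local maximum. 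Since ``perfect and eutactic'' is equivalent to the nonexistence of a $v\neq 0$ with all $\mathrm{d}f_{s_i}(p)(v)\ge 0$ (equivalently, $0$ lies in the full interior of $K$), this single observation closes both halves of the contrapositive. As written, your proposal leaves the converse unproved and mislocates the role of the convexity hypothesis.
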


\begin{remark}
\begin{enumerate}
\item Many particular cases of this theorem were already known. Let us mention that Schmutz (\cite{schmutz}) established a similar theorem for the systole.
\item The theorem is true under the weaker assumption that the $(f_s)_{s\in S_p}$ are stricly convex along the geodesics of $U$ (see \cite{bavard-crelle,bavard-smf}).
\end{enumerate}
\end{remark}

 From the definition of perfection we get the following corollary which is very useful to find extreme points:

\begin{corollary}\label{cor:voronoi}
If $p\in V$ is extreme then the cardinal of $\Sys(p)$ is at least $\dim V+1$.
\end{corollary}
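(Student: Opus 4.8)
The plan is to read the conclusion off the \emph{perfection} half of Theorem~\ref{thm:voronoi} and then run an elementary affine dimension count. First I would invoke that theorem: under condition $(H)$, an extreme point $p$ is in particular perfect, so by the definition of a perfect point the finite family of differentials $(\mrm df_s(p))_{s\in\Sys(p)}$ affinely spans the cotangent space $T_p^\ast V$.

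Next I would use the following standard fact: a finite family of $k$ vectors $\xi_1,\dots,\xi_k$ in a real vector space has affine span $\xi_1+\mathrm{span}(\xi_2-\xi_1,\dots,\xi_k-\xi_1)$, whose dimension is at most $k-1$. Applying this to the $|\Sys(p)|$ differentials above, and using $\dim T_p^\ast V=\dim V$, perfection gives $|\Sys(p)|-1\geq \dim V$, that is $|\Sys(p)|\geq \dim V+1$, which is the desired bound. This is the whole of the argument; no further estimate is needed.

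The point worth emphasising is that all the genuine content is hidden in the implication ``extreme $\Rightarrow$ perfect'' supplied by Theorem~\ref{thm:voronoi}, so there is no real obstacle left in the corollary itself. It is nonetheless instructive to see the easy part of that implication by hand: if the differentials failed to affinely span $T_p^\ast V$ and their affine span avoided the origin, then the span would lie in an affine hyperplane $\{\xi : \xi(v)=c\}$ with $c\neq 0$ for some $v\in T_pV$; after replacing $v$ by $-v$ we may assume $c>0$, and then $f_s(p+tv)=f_s(p)+tc+o(t)$ increases to first order for every $s\in\Sys(p)$ simultaneously, while the local finiteness condition keeps the remaining $f_{s'}$ strictly above $\mu$ for small $t$. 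Hence $\mu(p+tv)>\mu(p)$ for small $t>0$, contradicting local maximality. The only case this first-order reasoning does not settle is when the origin lies in the (proper) affine span, and it is precisely there that the convexity input of condition $(H)$ --- the second-order analysis behind Theorem~\ref{thm:voronoi} --- is indispensable.
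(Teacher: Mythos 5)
Your argument is correct and is exactly the paper's: the corollary is stated as an immediate consequence of the definition of perfection, i.e.\ Theorem~\ref{thm:voronoi} gives that an extreme point is perfect, and a family of $k$ vectors can affinely span a space of dimension at most $k-1$, forcing $|\Sys(p)|\geq \dim T_p^\ast V+1=\dim V+1$. The extra first-order sketch you append is not needed (and the paper does not include it), but it does not affect the validity of the proof.
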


 The Theorem~\ref{thm:shearing} shows that the condition $(H)$ is satisfied in the cases of the systole and of the systole of arcs.

%%%%%%%%%%%%%%%%%%%%%%%%%
\section{Extreme points of the systole of arcs}
%%%%%%%%%%%%%%%%%%%%%%%%%

 Let us assume that $S$ has nonempty boundary, and let $B=\{b_1,\ldots, b_l\}$ be a nonempty set of boundary components of $S$. We recall that $\teich(S,B)$ is the space of isotopy classes of hyperbolic metrics on $S$ such that the sum $\ell(B)$ of the lengths of the boundary components in $B$ is fixed, and such that the lengths $\ell(b_{l+1}),\ldots,\ell(b_k)$ of the other boundary components are fixed. We allow the length of a boundary component to be zero, which means that the corresponding boundary component should be replaced by a cusp.\par
  We denote by $\arcs(B)$ the set of isotopy classes of essential arcs in $S$ whose endpoints belong to some boundary components in $B$. We now introduce the following generalized systole:
$$\sys_B ~=~ \inf_{\a\in \arcs(B)} \ell_\a.$$
 It is not difficult to show that $\sys_B$ admits a global maximum over $\teich(S,B)$. Note that $\sys_B=\sys_{\partial S}$ when $B$ is the full set of boundary components of $S$.\par

 To illustrate the theorem of the previous section, we determine all the extreme points of $\sys_B$ over the Teichm\"uller space $\teich(S,B)$:\par
 
\begin{theorem}\label{thm:max-arcs}
A point $[m]\in\teich(S,B)$ is extreme if and only if the systolic arcs of $m$ divide $S$ into right-angled hexagons and right-angled bigons with a boundary component which is not in $B$. Moreover $\sys_B(m)$ is the unique solution of the equation
$$6[-2\chi(S)-(k-l)] \mrm{arcsinh}\left(\frac{1}{2\sinh x}\right) + 2\sum_{i=l+1,\ldots,k} \mrm{arcsinh}\left(\frac{\cosh(b_i/2)}{\sinh(x/2)}\right)=\ell(B).$$ 
In particular, extreme points of $\sys_B$ are in bijection with maximal arcs system.
\end{theorem}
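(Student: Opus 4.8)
The plan is to combine Bavard's characterisation of extreme points (Theorem~\ref{thm:voronoi}) with a topological count of disjoint arcs. The length functions $\ell_\a$ have positive-definite Hessians in the shearing coordinates (Theorem~\ref{thm:shearing}), and $\teich(S,B)$ is the intersection of $\teich(S)$ with a linear subspace of the shearing coordinates, so condition $(H)$ holds for the system $(\ell_\a)_{\a\in\arcs(B)}$. Theorem~\ref{thm:voronoi} then applies: a point is extreme if and only if it is perfect and eutactic, and Corollary~\ref{cor:voronoi} shows that an extreme $[m]$ carries at least $\dim\teich(S,B)+1$ systolic arcs, i.e. $|\Sys([m])|\geq \dim\teich(S,B)+1$.

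First I would check that the systolic arcs of $[m]$ are pairwise disjoint and pairwise non-isotopic. Non-isotopy is automatic (they are distinct classes in $\arcs(B)$), and disjointness is the standard systolic surgery argument: two orthogeodesics of minimal length cannot meet transversally, since a cut-and-paste at an interior intersection would produce strictly shorter arcs still lying in $\arcs(B)$; moreover two distinct such arcs cannot share an endpoint, as both meet $\partial S$ orthogonally there and would otherwise coincide. Hence $\Sys([m])$ is a system of disjoint non-isotopic essential arcs, so its cardinality is at most the size of a maximal arc system. A short Euler-characteristic computation gives this size: a maximal system in $\arcs(B)$ cuts $S$ into right-angled hexagons (three arc-sides, three $B$-sides) and into ``bigons'' encircling the boundary components not in $B$ (one arc-side and one $B$-side each), whence the system comprises $-3\chi(S)-(k-l)=\dim\teich(S,B)+1$ arcs and $-2\chi(S)-(k-l)$ hexagons. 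Comparing with Corollary~\ref{cor:voronoi} forces $|\Sys([m])|=\dim\teich(S,B)+1$, so the systolic arcs form a maximal system and realise exactly this decomposition.

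Next I would use hyperbolic rigidity to pin down the metric. All systolic arcs share the length $x=\sys_B(m)$ and meet $\partial S$ orthogonally, so every complementary piece is a right-angled polygon whose arc-sides all equal $x$. Since a right-angled hexagon is determined by its three alternating sides, fixing those sides to $x$ forces it to be the regular right-angled hexagon of parameter $x$, and each bigon is similarly determined by $x$ and the length of the boundary component it encircles. Summing the lengths of all boundary-segments lying on $B$ — three per hexagon, of length $2\,\mrm{arcsinh}(1/(2\sinh x))$, and one per bigon, of length $2\,\mrm{arcsinh}(\cosh(b_i/2)/\sinh(x/2))$ — and setting the total equal to $\ell(B)$ yields the stated equation. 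Its left-hand side is continuous and strictly decreasing in $x$, running from $+\infty$ to $0$, so there is exactly one positive solution; this fixes $x$ and shows that the combinatorial arc system determines $[m]\in\teich(S,B)$ uniquely.

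For the converse I would reverse these steps: given a maximal system in $\arcs(B)$, glue the corresponding regular hexagons and bigons, all of parameter the unique solution $x$, to build a metric $[m]\in\teich(S,B)$ in which these arcs are orthogeodesic of length $x$, and verify that they are precisely its systolic arcs. It then remains to show that $[m]$ is perfect and eutactic, so that Theorem~\ref{thm:voronoi} yields strict extremality. Eutaxy I would read off from the identity expressing $\ell(B)$ as the sum of the $B$-perimeters of the complementary pieces: its differential has the form $\sum_i c_i\,\mrm d\ell_{\a_i}(m)$ with all $c_i>0$, and it vanishes on $\teich(S,B)$ because $\ell(B)$ is constant there, which places the origin in the affine interior of the convex hull of the $\mrm d\ell_{\a_i}(m)$. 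Perfection then amounts to showing that this is the only linear relation among the $\dim\teich(S,B)+1$ differentials, i.e. that they span $T_m^\ast\teich(S,B)$; this reduces to the statement that the arc-lengths of a maximal system, together with the lengths of the boundary components not in $B$, form a local coordinate system on $\teich(S)$. Running both directions, together with the uniqueness above, establishes the bijection between extreme points and maximal systems. I expect this control of the configuration of the differentials $\mrm d\ell_{\a_i}$ — through the hyperbolic trigonometry of the regular pieces — to be the main obstacle, the dimension count and the trigonometric identities being comparatively routine.
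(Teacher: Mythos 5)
Your proposal is correct and follows essentially the same route as the paper: condition $(H)$ via the shearing-coordinate Hessians, Corollary~\ref{cor:voronoi} to force $\dim\teich(S,B)+1$ systolic arcs, disjointness by surgery, the Euler-characteristic count identifying a maximal system, and hexagon/bigon rigidity for the equation. The converse (eutaxy and perfection via the differential of the total boundary length as a positive combination of the $\mrm d\ell_{\a_i}$) is exactly how the paper handles it, though it defers that part to the construction of critical points in Part~2.
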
 
  
\begin{remark}
\begin{enumerate}
\item We observe that there is only one local maximum. 
\item We actually prove that the extreme points are the only perfect points.
\end{enumerate}
\end{remark}  
  
  As a corollary we obtain the following optimal inequality due to Bavard (\cite{bavard-manuscripta}):
\begin{corollary}[Bavard]
Let $m$ be a hyperbolic metric on $S$, then we have
\begin{eqnarray*}
2 \sinh(\sys_a(m)/2)\sinh(-\ell(\partial S)/12\chi(S)) & \leq & 1,
\end{eqnarray*}
with equality if and only the arcs realizing $\sys_\partial$ divide the surface into right-angled hexagons.
\end{corollary}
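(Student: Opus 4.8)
The plan is to deduce the corollary directly from Theorem~\ref{thm:max-arcs} in the special case $B=\partial S$, where the systole of arcs $\sys_a$ coincides with $\sys_\partial=\sys_B$. Taking $B$ to be the full set of boundary components forces $l=k$, so that $k-l=0$ and the sum over $i=l+1,\ldots,k$ in the equation of Theorem~\ref{thm:max-arcs} is empty. That equation then specialises to
\begin{equation*}
-12\chi(S)\,\mrm{arcsinh}\!\left(\frac{1}{2\sinh(x/2)}\right)=\ell(\partial S),
\end{equation*}
whose unique positive solution I denote by $M$. Because $\sys_\partial$ attains its maximum on the Teichm\"uller space $\teich(S,\partial S)$ of metrics with the prescribed total boundary length, and because Theorem~\ref{thm:max-arcs} identifies the extreme points of $\sys_\partial$ as exactly the metrics whose systolic arcs cut $S$ into right-angled hexagons, this maximal value is $M$ and it is realised precisely at the hexagonal metrics.

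Rearranging the displayed equation and using $-12\chi(S)>0$ gives
\begin{equation*}
\mrm{arcsinh}\!\left(\frac{1}{2\sinh(M/2)}\right)=\frac{-\ell(\partial S)}{12\chi(S)},
\end{equation*}
and applying the increasing function $\sinh$ to both sides yields $2\sinh(M/2)\sinh(-\ell(\partial S)/12\chi(S))=1$, which is the equality case of the corollary. To obtain the inequality for an arbitrary hyperbolic metric $m$, I would fix the total boundary length $L=\ell(\partial S)$ of $m$ and view $m$ as a point of $\teich(S,\partial S)$ for that value of $L$, so that $\sys_\partial(m)\leq M$, where $M=M(L)$ is the maximum above. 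The function $x\mapsto\mrm{arcsinh}(1/(2\sinh(x/2)))$ is strictly decreasing on $(0,\infty)$, so $\sys_\partial(m)\leq M$ gives
\begin{equation*}
\mrm{arcsinh}\!\left(\frac{1}{2\sinh(\sys_\partial(m)/2)}\right)\geq\frac{-\ell(\partial S)}{12\chi(S)};
\end{equation*}
applying $\sinh$ and clearing the positive factor $2\sinh(\sys_\partial(m)/2)$ produces exactly $2\sinh(\sys_\partial(m)/2)\sinh(-\ell(\partial S)/12\chi(S))\leq 1$. Strict monotonicity shows that equality forces $\sys_\partial(m)=M$, i.e.\ that $m$ is extreme, which by Theorem~\ref{thm:max-arcs} means the systolic arcs divide $S$ into right-angled hexagons.

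I do not expect a serious obstacle here: the whole content is carried by Theorem~\ref{thm:max-arcs}, and the corollary is merely a rearrangement of its defining equation together with a monotonicity argument. The only point that deserves a little care is that the stated inequality ranges over \emph{all} hyperbolic metrics $m$ rather than over a single stratum of fixed total boundary length; the argument handles this by working stratum by stratum, the quantity $-\ell(\partial S)/12\chi(S)$ appearing on the right-hand side being precisely the value $\mrm{arcsinh}(1/(2\sinh(M/2)))$ that pins down the maximum $M$ on each stratum. One should also record the harmless degenerate situation $\ell(\partial S)=0$ (all boundary components replaced by cusps), where both sides vanish and the inequality is trivially strict.
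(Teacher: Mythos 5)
Your deduction is correct and is exactly the route the paper intends: specialize the equation of Theorem~\ref{thm:max-arcs} to $B=\partial S$ (so $k-l=0$ and the second sum is empty), identify the common extremal value $M$ from that equation, and conclude by the strict monotonicity of $x\mapsto\mathrm{arcsinh}\left(1/(2\sinh(x/2))\right)$ together with the existence of a global maximum on each stratum of fixed $\ell(\partial S)$. Note only that in writing $\sinh(x/2)$ you have implicitly corrected a misprint in the first term of the displayed equation of Theorem~\ref{thm:max-arcs} (the semi-regular right-angled hexagon relation is $2\sinh(\beta/2)\sinh(\sys_{\partial}(m)/2)=1$, which is what makes the specialization consistent with the corollary).
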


We start with an obvious remark:

\begin{lemma}
Let $m$ be any hyperbolic metric on $S$. Let $\a$ and $\b$ be two geodesic arcs that are of minimal length in their homotopy class (or equivalently orthogonal to $\partial S$). Let us assume that they intersect, and let $\g$ any piecewise geodesic arc made with exactly one segment of $\a$ and one segment of $\b$. Then $\g$ is essential, that is it can not be deformed into $\partial S$.
\end{lemma}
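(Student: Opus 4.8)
The statement is an incarnation of the uniqueness of the perpendicular from a point to a geodesic of $\Hyp$, read in the universal cover. The plan is to translate \emph{inessentiality} into a statement about lifts, and then to exploit the right angles that $\alpha$ and $\beta$ make with $\partial S$ to derive a contradiction. I do not expect a serious obstacle; the only point requiring care is the bookkeeping of what ``deformable into $\partial S$'' means at the level of the universal cover.

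First I would record the universal-cover criterion for essentiality. Pass to the universal cover $\tilde S\subset\Hyp$ of $(S,m)$, a convex region whose boundary is a disjoint union of complete geodesics, namely the lifts of the components of $\partial S$. An arc with endpoints on $\partial S$ is inessential, i.e. deformable into $\partial S$, if and only if one (hence any) of its lifts has its two endpoints on the same boundary geodesic of $\tilde S$. For the direction I need, suppose $\gamma$ is inessential and lift a deformation $H\colon I\times[0,1]\to S$ with $H_0=\gamma$ and $H_1\subset\partial S$, endpoints on $\partial S$ throughout. The two endpoint paths of the lifted homotopy stay on lifts of $\partial S$; since distinct boundary geodesics of $\tilde S$ are disjoint and the paths are continuous, each remains on a fixed boundary geodesic. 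As $H_1$ is a connected arc contained in a single component of $\partial S$, its lift lies on a single boundary geodesic $\tilde b$, forcing the two initial boundary geodesics to equal $\tilde b$, hence to coincide.

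Now I would apply this to $\gamma$. Choose a lift $\tilde\gamma=\tilde\alpha'\cup\tilde\beta'$, where $\tilde\alpha'$ and $\tilde\beta'$ are lifts of the chosen segments of $\alpha$ and $\beta$, meeting at a lift $\tilde x$ of the intersection point $x$; denote by $\tilde b_1$ and $\tilde b_2$ the boundary geodesics of $\tilde S$ containing the endpoints of $\tilde\gamma$. Since $\alpha$ and $\beta$ meet $\partial S$ orthogonally, $\tilde\alpha'$ meets $\tilde b_1$ at a right angle and $\tilde\beta'$ meets $\tilde b_2$ at a right angle. If $\gamma$ were inessential, then by the criterion above $\tilde b_1=\tilde b_2=:\tilde b$, so $\tilde\alpha'$ and $\tilde\beta'$ would be two perpendiculars dropped from $\tilde x$ to the single geodesic $\tilde b$. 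The foot of the perpendicular from a point to a geodesic of $\Hyp$ being unique, $\tilde\alpha'$ and $\tilde\beta'$ would then share the same tangent direction at $\tilde x$. But $\alpha$ and $\beta$ are distinct geodesics, so they cross transversally at $x$ and their lifts have distinct tangents at $\tilde x$; this is the desired contradiction. Hence $\tilde b_1\neq\tilde b_2$, and $\gamma$ is essential.

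The crux is simply that the transversality of $\alpha$ and $\beta$ at $x$ produces a genuine corner, while the two perpendicularity conditions at $\partial S$ are incompatible with the uniqueness of the perpendicular foot. Equivalently, one could argue by Gauss--Bonnet: an inessential $\gamma$ would cut off, together with a sub-arc of $\partial S$, an embedded hyperbolic disk $D$ whose three exterior angles are two right angles at $\partial S$ and one angle coming from the corner at $x$, which are not large enough to compensate the curvature, forcing $\area(D)<0$ and hence a contradiction. I prefer the universal-cover argument since it sidesteps any discussion of whether $\gamma$ is embedded.
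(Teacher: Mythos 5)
Your proof is correct. The paper offers no proof at all --- it states this as ``an obvious remark'' --- so there is nothing to compare against; your universal-cover argument (inessential $\Rightarrow$ both endpoints of a lift lie on the same boundary geodesic $\Rightarrow$ two distinct perpendiculars from $\tilde x$ to that geodesic, contradicting uniqueness of the perpendicular foot in $\Hyp$) is the standard and expected justification, and your remark that working with lifts avoids any embeddedness issue for the concatenated path $\g$ is a genuine point in its favour.
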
 

From this lemma it comes that the arcs realizing $\sys_B $ are pairwise disjoint. Then it is easy to compute their maximal number:

\begin{lemma}
Let $m$ be a hyperbolic metric on $S$.
The maximal number of disjoint geodesic arcs orthogonal to $\partial S$ whose isotopy class belong to $\arcs(B)$ is equal to $-3\chi(S)-(k-l)$, which is the dimension of $\teich(S,B)$ plus $1$. Such a family of arcs divide $S$ into right-angled hexagons and bigons, each bigon containing a boundary component that is not in $B$. The number of hexagons is $-2\chi(S)-(k-l)$ .
\end{lemma}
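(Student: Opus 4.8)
The plan is to establish the final lemma by a doubling argument combined with an Euler characteristic count. First I would reduce the statement to a purely combinatorial one: given a family of disjoint essential geodesic arcs orthogonal to $\partial S$ whose isotopy classes lie in $\arcs(B)$, I want to understand the pieces into which they cut $S$. Since the arcs are orthogonal to the boundary, I would double the surface $S$ along the components of $\partial S$ \emph{not} in $B$, or more conveniently observe that a maximal such arc family must cut $S$ into pieces that are simply connected relative to the boundary arcs. The key claim is that a \emph{maximal} family leaves no room for further essential arcs, so every complementary region, once we record its boundary structure, must be an elementary polygon: either a right-angled hexagon (three sides on $B$-boundary alternating with three arcs) or a right-angled bigon enclosing one of the boundary components $b_{l+1},\ldots,b_k$ not in $B$.

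The main computation is the Euler characteristic count, which I would organize as follows. Suppose the arc family has $A$ arcs and cuts $S$ into $F$ faces. Each arc has two endpoints on $\partial S$; I would introduce the vertices as the endpoints of arcs lying on the boundary, count them as $V$, and set up $\chi(S) = V - E + F$ for the induced CW-structure, being careful to include both the arc-edges and the boundary-edges (the segments of $\partial S$ cut out by the arc endpoints). The maximality hypothesis forces each face to be one of the two allowed polygon types; writing $H$ for the number of hexagons and $G$ for the number of bigons, and noting that each bigon absorbs exactly one boundary component not in $B$ so that $G = k-l$, I would solve the resulting linear system. Counting edge-incidences (each arc borders two faces, each boundary-edge borders one face) and vertex-incidences yields the relations that pin down $A = -3\chi(S)-(k-l)$ and $H = -2\chi(S)-(k-l)$. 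The identification with $\dim\teich(S,B)+1$ is then immediate from the codimension computation recalled earlier, since $\dim\teich(S) = -3\chi(S)$ and $\teich(S,B)$ has codimension $k+1$, so $\dim\teich(S,B)+1 = -3\chi(S) - (k+1) + 1 = -3\chi(S)-k$, and one checks this matches $-3\chi(S)-(k-l)$ after accounting for the $l$ components fused by the single length constraint $\ell(B)$.

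The hard part, I expect, will be verifying that maximality genuinely forces the complementary regions to be \emph{exactly} hexagons and bigons, rather than merely ruling out disks bounded entirely by arcs or by boundary. Here the argument must use the previous lemma: since systolic (orthogeodesic) arcs are pairwise disjoint, a maximal disjoint family triangulates $S$ in the arc-and-boundary sense, and any complementary region with more than the minimal combinatorial complexity would admit a diagonal arc essential and in $\arcs(B)$, contradicting maximality. I would argue that a region all of whose boundary arcs lie in $B$ and which is not a hexagon can be subdivided, while a region touching a non-$B$ component must, if irreducible, be a bigon around that component. This case analysis, together with the constraint that arc endpoints lie only on $B$-components, is the delicate point; once it is settled the counting is routine.
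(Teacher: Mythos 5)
Your proposal is correct and follows the same overall strategy as the paper: show that a maximal disjoint family in $\arcs(B)$ cuts $S$ into hexagons and bigons-around-non-$B$-components, then count. The only real difference is the counting device. The paper degenerates all boundary lengths to zero, so the arcs become ideal arcs between cusps, the complementary pieces become ideal triangles and ideal monogons (one per non-$B$ cusp), and the count falls out of Gauss--Bonnet: each piece has area $\pi$, the total area is $-2\pi\chi(S)$, giving $-2\chi(S)=t+(k-l)$ and $2e=3t+(k-l)$. Your direct CW Euler characteristic count on the bordered surface is an equivalent computation and buys you nothing to prove about the degeneration, but it requires two points of care that your sketch glosses over. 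First, the bigon regions are \emph{annuli}, not disks (they contain a whole boundary circle carrying no arc endpoints), so in $\chi(S)=V-E+F$ you must either cap them off or place an auxiliary vertex and edge on each non-$B$ component and count each such region with Euler characteristic $0$; done this way one gets $\chi(S)=-A+H$ and $2A=3H+(k-l)$, hence $A=-3\chi(S)-(k-l)$ and $H=-2\chi(S)-(k-l)$ as claimed. Second, your identification with $\dim\teich(S,B)+1$ goes through a detour: the codimension of $\teich(S,B)$ in $\teich(S)$ is $k-l+1$ (one condition for the sum $\ell(B)$ and $k-l$ conditions for the remaining components; the ``$k+1$'' in the paper's \S 2.1 only matches when $l=0$), so $\dim\teich(S,B)+1=-3\chi(S)-(k-l)$ directly, with no need to ``account for fused components'' afterwards. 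Finally, the step you correctly flag as delicate --- that maximality forces every complementary region to be a hexagon or a bigon, because anything more complicated admits a further essential diagonal in $\arcs(B)$ --- is asserted at essentially the same level of detail in the paper, so your treatment is no less complete than the original.
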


\begin{proof}
The only non trivial statement is on the cardinal of the maximal family of arcs.
To simplify we fix the lengths of all the boundary components equal to zero. Any isotopy class of essential arc is realized by a unique geodesic arc going from one cusp to another, and these arcs realize the intersection number of their isotopy classes.\par
 A maximal family of disjoint geodesic arcs going from one cusp to another divide the surface into ideal triangles and ideal monogons containing one cusp. An ideal mongon is obtained from an ideal triangle by identifying two sides. Note that there are $k-l$ monogons. Ideal triangle and monogons are of area $\pi$. As the area of the surface is $-2\pi\chi(S)$ we find
\begin{eqnarray*}
-2\chi(S) & = & t +(k-l), \\
 2e                & = &  3t+(k-l)  
\end{eqnarray*} 
where $t$ is the number of ideal triangles, and $e$ the number of arcs. It is then very easy to conclude.
\end{proof} 

The proof of the theorem is an application of the previous lemmas and of Theorem~\ref{thm:voronoi}. 

\begin{proof}[Proof of the Theorem~\ref{thm:max-arcs}]
Let $[m]\in\teich(S,B)$ be an extreme point of $\sys_B$. According to Corollary~\ref{cor:voronoi} the hyperbolic surface $(S,m)$ has at least $\dim\teich(S,B)+1$ arcs realizing $\sys_B(m)$. From the previous lemmas, we deduce that these arcs divide $S$ into right-angled hexagons and right-angled bigons, each bigon containing one boundary component that does not belong to $B$.\par
 Each hexagon has three sides of length $\sys_B(m)$, and three other sides of length $\b$ given by $2\sinh(\b/2)\sinh(\sys_B(m))=1$. We recall that there are $-2\chi(S)-(k-l)$ hexagons. The bigon containing the boundary component $b_i$ has a side of length $\sys_B(m)$ and the other of length $\b_i$ given by $\sinh(\b_i/2)\sinh(\sys_B(m)/2)=\cosh(\ell(b_i)/2)$. One easily concludes.
\end{proof}

 Let us note that $\sys_B$ and $\teich(S,B)$ have been chosen so that the maximal cardinality of a set of arcs realizing $\sys_B$ is exactly $\teich(S,B)+1$. This is a key point in our proof.

%%%%%%%%%%%%%%%%%%%%%%%%%%%%%%
\section{Topological singularities of generalized systoles}
%%%%%%%%%%%%%%%%%%%%%%%%%%%%%%

\subsection{Topological Morse functions}%%%%%%%%%%%%%%%%%%%%%%%%
We recall the notion of topological Morse function introduced by Morse himself under the name \emph{topologically nondegenerate functions}. We refer to the work of Morse (\cite{morse-fund,morse-janalyse}) for proofs.\par

Let $F:M\rightarrow \R$ be a continuous function over a topological manifold $M$. A point $p\in M$ is \emph{regular} if there is a topological chart $\f:(U,p)\subset M\rightarrow (\R^n,0)$ such that $F\circ \f-F(p)$ is the restriction of a nonzero linear form. A \emph{critical point} is a point which is not regular. A critical point $p$ is \emph{nondegenerate} if there is a topological chart $\f:(U,p)\subset M\rightarrow (\R^n,0)$ such that $F\circ \f-F(p)$ is the restriction of a nondegenerate quadratic form. The number of negative eigenvalues of the quadratic form is called the \emph{index} of $F$ at $p$. A \emph{topological Morse function} $F:V\rightarrow \R$ is a continuous function whose critical points are all nondegenerate.\par
 
 Note that there are topological Morse functions on $\R$ that are not differentiable, so this notion is more general than the usual one. The classical theorems of Morse theory remains true for topological Morse function. Let us now assume that $F:M\rightarrow \R_+$ is proper, namely its supremum is $+\infty$ and is reached when going at infinity in $M$ (\emph{i.e.} leaving all compact subsets). Then Morse showed that $M$ can be deformed into a compact cell complex $K\subset M$, in particular the inclusion $K\hookrightarrow M$ induced an isomorphism on homology $\hgy_\ast(K;\Z)\simeq \hgy_\ast(M;\Z)$. The topology of $K$ is completely determined by the critical points of $F$, and the homology of $K$ is isomorphic to the homology of the complex of critical points. Note that the dimension of $K$ is equal to the maximal index of a critical point. 

\subsection{Akrout's theorem}%%%%%%%%%%%%%%%%%%%%% 
 Let us now come back to generalized systoles. Following our previous notations, we consider the generalized systole $\mu$ associated to a system of length functions $\{f_s:V\rightarrow \R~;~s\in C\}$. Under the condition $(H)$, the topological singularities of the generalized systole are completely described by Akrout's theorem (\cite{akrout}):  
   
\begin{theorem}[Akrout]\label{thm:akrout}
If (H) is satisfied, then $\mu$ is a topological Morse function over $V$. Moreover its critical points of index $r$ are exactly the eutactic points of rank $r$ (the rank of an eutactic point $p$ being the vectorial rank of the differentials $(\mrm df_s(p))_{s\in \Sys(p)}$).
\end{theorem}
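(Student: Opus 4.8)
The plan is to follow Akrout's strategy: reduce the question to a local model at an arbitrary point $p\in V$ and read off the local topological type of $\mu$ from the configuration of the differentials $u_s:=\mrm d f_s(p)$, $s\in\Sys(p)$, inside $T_p^\ast V$. Throughout, write $K$ for the convex hull of the family $(u_s)_{s\in\Sys(p)}$.

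\textbf{Localization and normalization.} By the local finiteness condition there is a neighborhood $U$ of $p$ on which $\mu=\min_{s\in\Sys(p)}f_s$ is a finite minimum; after subtracting $\mu(p)$ I may assume each $g_s:=f_s-\mu(p)$ vanishes at $p$. I then invoke (H): equip $U$ with the Riemannian metric making the Hessians of the $g_s$ positive-definite and pass to geodesic normal coordinates centred at $p$. This identifies a neighborhood of $p$ with a ball in $T_pV\cong\R^n$ in which geodesics through $p$ are straight rays, the $g_s$ are strictly convex along every such ray, $g_s(0)=0$, and (using the metric to identify $T_p^\ast V$ with $T_pV$) $g_s$ has differential $u_s$ at the origin. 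Everything reduces to the local study of $h:=\min_{s\in\Sys(p)}g_s$, a minimum of finitely many strictly convex functions, and the assertion becomes: $h$ is regular at $0$ when $0\notin\mathrm{relint}(K)$, and is a nondegenerate quadratic singularity of index $r=\dim\mathrm{span}\{u_s\}$ when $0\in\mathrm{relint}(K)$.

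\textbf{The combinatorial dichotomy.} The hinge is the elementary convexity lemma: $0\in\mathrm{relint}(K)$ if and only if every $v\in T_pV$ with $\langle u_s,v\rangle\geq 0$ for all $s$ in fact satisfies $\langle u_s,v\rangle=0$ for all $s$; equivalently, $p$ fails to be eutactic exactly when there is a common weak descent direction $v$ for all the $g_s$ that is strict for at least one index. In the non-eutactic case I would take such a $v$ and show, using strict convexity in the directions transverse to $v$, that each line parallel to $v$ meets $\{h=0\}$ in exactly one point, so $\{h=0\}$ is the graph of a continuous function and $\{h\leq0\},\{h\geq0\}$ are the two closed half-balls it bounds; the ``slide along $v$'' map is then a homeomorphism carrying $h$ to a nonzero linear coordinate, proving $p$ regular.

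\textbf{The eutactic case and the index.} Suppose $0\in\mathrm{relint}(K)$, put $W=\mathrm{span}\{u_s\}$ (dimension $r$) and $W^\circ=\{v:\langle u_s,v\rangle=0\ \forall s\}$ (dimension $n-r$), so $T_pV=W^\circ\oplus U$ with $\dim U=r$. Along any $v\in W^\circ$ all first-order terms vanish and strict convexity gives $h(tv)>0$ for $t\neq0$: these are $n-r$ directions of strict local minimum. For any $v\notin W^\circ$, eutaxy forces the scalars $\langle u_s,v\rangle$ to take both signs, so some $g_s$ decreases to first order and $h(tv)<0$ for small $t\neq0$ of either sign: these are $r$ negative directions. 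It remains to upgrade this infinitesimal signature $(n-r,r)$ to a topological chart in which $h$ becomes $\sum_{i=1}^{n-r}x_i^2-\sum_{j=1}^{r}y_j^2$; as before I would build the homeomorphism from the strictly convex sublevel bodies of the $g_s$, showing that the positive subspace $W^\circ$ and the negative cone glue into the standard quadratic cone. This exhibits $p$ as a nondegenerate critical point of index $r$, the vectorial rank. Applying the dichotomy at every point of $V$ shows that all critical points are nondegenerate, i.e. $\mu$ is a topological Morse function. Note that the extreme top-rank case $r=n$ is precisely the equivalence ``extreme $\Leftrightarrow$ perfect and eutactic'' of Theorem~\ref{thm:voronoi}.

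\textbf{The main obstacle.} The directional sign analysis is routine once the convexity lemma is in hand; the real difficulty is the construction of the topological charts, i.e. passing from the correct infinitesimal picture to an actual homeomorphism. Here strict convexity—hence condition (H)—is indispensable: it forces the sublevel sets of the $g_s$ to be strictly convex bodies, so the level hypersurfaces of $h=\min_sg_s$, although in general only topological (they acquire cusps where the active index changes and are never smooth, which is exactly why one obtains a topological rather than smooth Morse function), are graphs over the relevant subspace and assemble into a product, respectively conical, structure. Making this patching along the loci where several indices are simultaneously minimal canonical and continuous is the technical heart of the argument.
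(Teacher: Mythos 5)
First, a point of reference: the paper does not prove this statement. It is quoted from Akrout's article and used as a black box (the paper's contribution is to verify hypothesis (H) via Theorem~\ref{thm:shearing}), so there is no internal proof to compare yours against. Your sketch does follow the known strategy: localize, use the metric supplied by (H) to make the active $f_s$ strictly convex, and read the local topological type of $\mu$ off the position of the origin relative to the convex hull $K$ of the differentials $u_s=\mrm df_s(p)$. The combinatorial dichotomy and the sign analysis along rays through $p$ are correct, as is the closing remark relating the top-rank case to Theorem~\ref{thm:voronoi}.

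As a proof, however, the proposal has two genuine gaps, both located at what you yourself call the technical heart. (i) In the non-eutactic case your ``slide along $v$'' argument is only justified when $0$ can be \emph{strictly} separated from $K$, so that $\langle u_s,v\rangle\geq c>0$ for all $s$ and $h$ is genuinely monotone along lines parallel to $v$. When $0$ lies on the relative boundary of $K$ the best available $v$ satisfies $\langle u_s,v\rangle\geq 0$ with equality for some indices; for those $s$ the restriction of $g_s$ to a line parallel to $v$ (offset from the origin) is strictly convex but not monotone, so $\{g_s<0\}$ is an interval that can sit anywhere on the line, $\{h<0\}=\bigcup_s\{g_s<0\}$ need not be connected, and the claim that each such line meets $\{h=0\}$ in exactly one point is unproved. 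Invoking ``strict convexity in the directions transverse to $v$'' does not address this, since the failure occurs along the direction $v$ itself. (ii) In the eutactic case, knowing the sign of $h$ on every ray through the origin does not by itself yield a topological chart in which $h$ becomes $\sum_i x_i^2-\sum_j y_j^2$: a continuous function positive on $W^\circ\setminus\{0\}$ and negative off $W^\circ$ need not be topologically conjugate to a quadratic form, and the assembly of the strictly convex sublevel bodies into a cone/product structure--which you defer--is precisely the content of Akrout's argument. So what you have is a correct road map with the two decisive constructions missing, not a proof.
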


\begin{remark}
The theorem implies that the eutactic points are isolated. In many cases it is not difficult to deduce the finiteness, eventually up to the action of some group like $\Mod(S)$.
\end{remark}

From his remarkable theorem, and from the result of Wolpert (\cite{wolpert}), Akrout deduced that the systole $\sys:\teich(S)\rightarrow\R$ is a topological Morse function. This implies that the existence of a $\Mod(S)$-invariant deformation of $\teich(S)$ into a cell complex $K\subset \teich(S)$. The codimension of $K$ is equal to the minimal rank of an eutactic point. When $S$ is a closed oriented surface of genus $g\geq 2$, the virtual codimension of $\Mod(S)$ is $4g-5$. So, the following classical question is very natural:

\begin{question}
What is the minimal rank of a critical point of $\sys$?
\end{question}

 As we already pointed out, the theorem of Wolpert does not work when we impose some conditions on the lengths of the boundary components. But our Theorem~\ref{thm:shearing} applies in that case, and shows that the systole is still a topological Morse function.
Our theorem also implies that 

\begin{theorem}
The function $\sys_B$ is a topological Morse function over $\teich(S,B)$.
\end{theorem}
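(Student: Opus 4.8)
The plan is to exhibit $\sys_B$ as a generalized systole in the sense of Section~2, to verify Bavard's condition $(H)$, and then to quote Akrout's theorem (Theorem~\ref{thm:akrout}). First I would observe that $\sys_B=\inf_{\a\in\arcs(B)}\ell_\a$ fits the framework: each $\ell_\a$ is smooth, and the family $\{\ell_\a\}_{\a\in\arcs(B)}$ satisfies the local finiteness condition, since for a fixed metric only finitely many isotopy classes of essential arcs are shorter than any given $L$, and this count is stable under small perturbations. Hence $\sys_B$ is a continuous generalized systole which, near any $[m]$, equals $\min_{\a\in\Sys([m])}\ell_\a$ over the finite set $\Sys([m])$ of systolic arcs.

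The heart of the argument is condition $(H)$: every $[m]\in\teich(S,B)$ must have a neighborhood carrying a Riemannian metric for which all the Hessians $\hess\,\ell_\a$, $\a\in\Sys([m])$, are positive-definite. I would build such a metric from shearing coordinates, reducing to Theorem~\ref{thm:shearing} by doubling. Let $\hat S$ be the double of $S$ across $\partial S$, with reflection involution $\tau$; then $\teich(S)$ embeds into $\teich(\hat S)$ as the $\tau$-fixed locus, and each orthogeodesic arc $\a$ doubles to a closed geodesic $\hat\a$ with $\ell_{\hat\a}=2\ell_\a$. Choosing a $\tau$-invariant maximal geodesic lamination $\hat\l$ such that every $\hat\a$ (for $\a\in\Sys([m])$) crosses every leaf, Theorem~\ref{thm:shearing} shows that each $\hess\,\ell_{\hat\a}$ is positive-definite on $\teich(\hat S)$ in the associated shearing coordinates. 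Since $\tau$ acts linearly on the space of transverse H\"older distributions of $\hat\l$, the fixed locus $\teich(S)$ is a linear slice, and the conditions on the boundary lengths, being linear in shearing coordinates, cut out $\teich(S,B)$ as a further affine slice. As the restriction of a positive-definite quadratic form to a subspace remains positive-definite, each $\hess\,\ell_\a$ is positive-definite on $\teich(S,B)$; and because the crossing condition is purely topological, this positivity holds on all of $\teich(S,B)$, in particular on a neighborhood of $[m]$. This establishes $(H)$.

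Once $(H)$ is verified, Theorem~\ref{thm:akrout} applies directly to $\sys_B$ over $\teich(S,B)$ and gives that it is a topological Morse function (and, as a by-product, identifies its critical points with the eutactic points, in accordance with Theorem~\ref{thm:max-arcs}). The main obstacle is the construction, at each $[m]$, of a \emph{single} lamination $\hat\l$ making all of the finitely many systolic arcs cross \emph{every} leaf at once. I would arrange this by taking the minimal part of $\hat\l$ to be filling, so that density of its leaves forces any transverse essential arc to meet each of them, and by treating the finitely many isolated completing leaves and the boundary leaves separately, using that the systolic arcs are pairwise disjoint and orthogonal to $\partial S$. This construction, together with the verification that doubling is linear in the shearing coordinates, is precisely the arc analogue of Theorem~\ref{thm:shearing} and is where the genuine work lies.
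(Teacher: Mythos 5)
Your proposal is correct and follows essentially the same route as the paper: the paper also obtains condition $(H)$ from Theorem~\ref{thm:shearing} (extended to surfaces with boundary via the double, and restricted to $\teich(S,B)$ using the linearity of the boundary length functions in shearing coordinates) and then invokes Akrout's theorem. You merely spell out the doubling of arcs into closed geodesics and the choice of a $\tau$-invariant lamination, details the paper leaves implicit.
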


\subsection{The search for critical points}%%%%%%%%%%%%%%%%%%%%%%%%%%%%%

 In order to obtain any homological information, one has still to determine the critical points.\par
  According to a general principle, points with many symmetries should interesting. In our context this is illustrated by the following lemma (\cite[Corollaire~1.3]{bavard-smf}):

\begin{lemma}[Bavard]
Let $G$ be a finite group of diffeomorphisms of $V$ that is acting by precomposition on $(f_s)_{s\in C}$. If $p\in V$ is an isolated fixed point of $G$ then $p$ is eutactic.
\end{lemma}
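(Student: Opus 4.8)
The plan is to use the symmetry to force the barycenter of the relevant differentials to be the origin of the cotangent space, and then to invoke a standard fact of convex geometry. First I would record the two elementary consequences of the hypotheses. By local finiteness applied at the level $L=\mu(p)$, the set $\Sys(p)$ is finite, and it is nonempty because the infimum defining $\mu(p)$ is attained on a finite subfamily. Since $G$ permutes the family $(f_s)$ by precomposition, say $f_{g\cdot s}=f_s\circ g^{-1}$, the function $\mu=\inf_s f_s$ is $G$-invariant; as $p$ is a $G$-fixed point, this makes $\Sys(p)$ a $G$-invariant subset of $C$: for $s\in\Sys(p)$ one has $f_{g\cdot s}(p)=f_s(g^{-1}p)=f_s(p)=\mu(p)$.

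Next I would differentiate at $p$. Using $g^{-1}(p)=p$, the chain rule gives $\mrm d f_{g\cdot s}(p)=\mrm d f_s(p)\circ \mrm d_p(g^{-1})=\rho(g)\,\mrm d f_s(p)$, where $\rho(g)\xi:=\xi\circ\mrm d_p(g^{-1})$ is the natural linear action of $G$ on $T_p^\ast V$. Thus $\rho$ permutes the finite family $\mathcal F=(\mrm d f_s(p))_{s\in\Sys(p)}$ exactly as $G$ permutes $\Sys(p)$. Consequently the barycenter $b=\frac{1}{|\Sys(p)|}\sum_{s\in\Sys(p)}\mrm d f_s(p)$ satisfies $\rho(g)b=b$ for every $g\in G$ (reindex the sum by the permutation), that is, $b\in (T_p^\ast V)^G$.

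The crux, and the only place where the word \emph{isolated} is used, is to prove $(T_p^\ast V)^G=0$; I expect this to be the main obstacle, since it genuinely involves the differentiable structure rather than just averaging. I would average an arbitrary Riemannian metric over the finite group $G$ to obtain a $G$-invariant metric. For this metric each $g\in G$ is an isometry fixing $p$, so $g\circ\exp_p=\exp_p\circ\,\mrm d_p g$ on a neighbourhood of the origin of $T_pV$; hence the germ at $p$ of the fixed-point set of $G$ equals $\exp_p\big((T_pV)^G\big)$. If $(T_pV)^G\neq 0$ this germ would be positive-dimensional, contradicting that $p$ is an isolated fixed point; therefore $(T_pV)^G=0$. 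The same invariant metric furnishes a $G$-equivariant isomorphism $T_pV\cong T_p^\ast V$, so $(T_p^\ast V)^G=0$ as well.

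Putting the pieces together, $b\in (T_p^\ast V)^G=\{0\}$, so the barycenter of $\mathcal F$ is the origin of $T_p^\ast V$. Since this barycenter is a convex combination of the vectors of $\mathcal F$ with all weights equal to $1/|\Sys(p)|>0$, and an equal-weight centroid of a finite family always lies in the affine interior of its convex hull, the origin lies in the affine interior of $K=\mathrm{conv}(\mathcal F)$. Hence $\mathcal F$ is eutactic, i.e. $p$ is a eutactic point, as claimed.
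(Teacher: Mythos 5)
Your proof is correct and follows essentially the same route as the paper's: average a Riemannian metric to make $G$ act by isometries near $p$, observe that $G$ permutes the differentials (the paper uses the gradients) of the systolic length functions so the barycenter is $G$-fixed, use the isolated-fixed-point hypothesis via the exponential map to conclude the barycenter vanishes, and note that an equal-weight centroid lies in the affine interior of the convex hull. Your justification that $(T_pV)^G=0$ is just a slightly more explicit version of the paper's remark that otherwise a geodesic through $p$ would be fixed pointwise.
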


\begin{proof}[Proof following Bavard]
The group $G$ is finite and fixes $p$, therefore it acts by isometries with respect to some Riemannian metric on some neighborhood of $p$. The family of gradients $(\nabla f_s)_{s\in S_p}$ is stabilized by $G$, because $G$ fixes $p$. So $G$ fixes the barycenter of $(\nabla f_s)_{s\in S_p}$ which is necessarily the origin of $T_pV$. Otherwise the geodesic tangent to this vector would be fixed pointwise by $G$, contradicting the assumption of the lemma. We conclude that the origin of $T_p V$ belongs to the affine interior of the convex hull of $(\nabla f_s)_{s\in S_p}$, so $p$ is eutactic.
\end{proof}

 When studying $\mu$ globally over $V$ it is interesting to decompose it into minimal classes:
\begin{definition}
Let $A$ be a subset of $C$. The \emph{minimal class} of $A$ is the set of points $p\in V$ such that $\Sys(p)=A$.
\end{definition}

The minimal classes define a partition of $V$. It is natural to look for critical inside a fixed minimal class. It appears that a critical point should verify the following necessary condition:

\begin{lemma}[Bavard]\label{lem:eutactic-minimum}
If the condition $(H)$ is satisfied, then any eutactic point realizes a strict minimum of $\mu$ in its minimal class.
\end{lemma}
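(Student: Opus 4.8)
The plan is to manufacture from the eutaxy of $p$ a single auxiliary function $g$, a strictly positive convex combination of the active length functions, which coincides with $\mu$ along the minimal class while admitting a strict local minimum at $p$ thanks to condition $(H)$. Write $A=\Sys(p)$ for the set of active indices at $p$. First I would translate eutaxy into an algebraic identity. Since $p$ is eutactic, the origin of $T_p^\ast V$ lies in the affine interior of the convex hull of the finite family $(\mathrm d f_s(p))_{s\in A}$. For a finite set of vectors the affine (relative) interior of the convex hull is precisely the set of \emph{strictly positive} convex combinations; hence there exist weights $\lambda_s>0$ with $\sum_{s\in A}\lambda_s=1$ and $\sum_{s\in A}\lambda_s\,\mathrm d f_s(p)=0$.

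Next I would set $g=\sum_{s\in A}\lambda_s f_s$ and work on the neighborhood $U$ furnished by condition $(H)$. The identity above says $\mathrm d g(p)=\sum_{s\in A}\lambda_s\,\mathrm d f_s(p)=0$, so $p$ is a critical point of $g$; and $\hess g=\sum_{s\in A}\lambda_s\,\hess f_s$ is a strictly positive combination of positive-definite bilinear forms, hence positive-definite throughout $U$. A critical point with positive-definite Hessian is a strict local minimum (restricting $g$ to any geodesic issued from $p$ yields a strictly convex function with a critical point at $p$), so $g(q)>g(p)$ for every $q\in U\setminus\{p\}$.

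It remains to identify $g$ with $\mu$ on the minimal class. After shrinking $U$ if necessary, local finiteness together with continuity of the $f_s$ guarantees that $\mu=\min_{s\in A}f_s$ on $U$ and that $f_s(q)>\mu(q)$ for every $q\in U$ and every $s\notin A$; in particular $\Sys(q)\subseteq A$ on $U$. If $q$ lies in the minimal class of $A$ then $\Sys(q)=A$, so $f_s(q)=\mu(q)$ for all $s\in A$, and therefore $g(q)=\sum_{s\in A}\lambda_s\,\mu(q)=\mu(q)$ since the weights sum to $1$; in particular $g(p)=\mu(p)$. Combining this with the previous step, for every $q\neq p$ in the minimal class of $A$ lying in $U$ we obtain $\mu(q)=g(q)>g(p)=\mu(p)$, which is precisely the assertion that $p$ is a strict local minimum of $\mu$ restricted to its minimal class.

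The analytic content here is light: everything rests on the elementary convex-geometric description of the affine interior and on the stability of positive-definiteness under strictly positive combinations. The only step requiring care is bookkeeping rather than analysis, namely shrinking $U$ so that the local representation $\mu=\min_{s\in A}f_s$ is valid and no inactive index becomes active on $U$; this is exactly what the local finiteness hypothesis provides. I note that the argument never uses that the minimal class is a submanifold, since $g$ attains a strict minimum at $p$ over the whole of $U$, \emph{a fortiori} over any subset containing $p$.
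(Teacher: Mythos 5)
Your argument is correct: the paper states this lemma without proof (deferring to Bavard), and what you write is precisely the standard argument behind it --- eutaxy yields strictly positive weights $\lambda_s$ with $\sum_{s\in A}\lambda_s\,\mathrm{d}f_s(p)=0$, the auxiliary function $g=\sum_{s\in A}\lambda_s f_s$ is geodesically strictly convex near $p$ by condition $(H)$ and has a critical point at $p$, hence a strict local minimum there, and $g$ coincides with $\mu$ on the minimal class of $\Sys(p)$. The only point worth making explicit is that the neighborhood $U$ furnished by $(H)$ should also be shrunk to a normal (geodesically star-shaped) neighborhood of $p$, so that every $q\in U$ is joined to $p$ by a geodesic along which the strict convexity of $g$ applies; with that bookkeeping in place the inequality $\mu(q)=g(q)>g(p)=\mu(p)$ holds for all $q\neq p$ in the minimal class near $p$, as you conclude.
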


\begin{lemma}\label{lem:eutactic-unicity}
If any two points in the minimal class of $A$ can be joined by a smooth curve $c:I\rightarrow V$ along which the $(f_s)_{s\in A }$ are strictly convex, then the minimal class of $A$ contains at most one eutactic point.
\end{lemma}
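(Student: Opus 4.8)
The plan is to argue by contradiction, using the strict convexity of the length functions along the given curve — but applied to the \emph{maximum} of the $f_s$ with $s\in A$, rather than to $\mu$ itself. Suppose that $p$ and $q$ are two distinct eutactic points of the minimal class of $A$, and let $c\colon[0,1]\to V$ be a smooth curve with $c(0)=p$ and $c(1)=q$ along which every $f_s$, $s\in A$, is strictly convex. Since $p$ and $q$ lie in the minimal class, $\Sys(p)=\Sys(q)=A$, and $A$ is finite by local finiteness; in particular $f_s(p)=\mu(p)$ and $f_s(q)=\mu(q)$ for every $s\in A$. I would then introduce the auxiliary function
$$h(t)~=~\max_{s\in A} f_s(c(t)),\qquad t\in[0,1],$$
which is strictly convex, being the maximum of finitely many strictly convex functions.

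The heart of the argument is the computation of the one-sided derivatives of $h$ at the two endpoints, where all the functions $f_s\circ c$ ($s\in A$) take a common value. At $t=0$, for small $t>0$ the index achieving the maximum is the one that maximizes $\mrm df_s(p)(c'(0))$, so that $h'_+(0)=\max_{s\in A}\mrm df_s(p)(c'(0))$; symmetrically, at the right endpoint $h'_-(1)=\min_{s\in A}\mrm df_s(q)(c'(1))$. Now eutaxy at $p$ in particular gives a convex combination $\sum_{s\in A}\lambda_s\,\mrm df_s(p)=0$ with $\lambda_s\ge 0$ and $\sum_{s\in A}\lambda_s=1$; evaluating this relation on the tangent vector $c'(0)$ shows that the numbers $\mrm df_s(p)(c'(0))$ cannot all be negative, whence $h'_+(0)\ge 0$. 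The same barycentric relation at $q$, evaluated on $c'(1)$, yields $h'_-(1)\le 0$.

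To finish, I would invoke strict convexity of $h$ on $[0,1]$, which forces $h'_+(0)<h'_-(1)$ (the one-sided derivative of a strictly convex function is strictly increasing on the interval). Combined with $h'_+(0)\ge 0$ and $h'_-(1)\le 0$ this gives $0\le h'_+(0)<h'_-(1)\le 0$, a contradiction; hence $p=q$, and the minimal class contains at most one eutactic point. Observe that the argument uses only the barycentric relation $0\in\mathrm{conv}\{\mrm df_s(\cdot)~;~s\in A\}$ at $p$ and at $q$, which is weaker than full eutaxy, and that it never requires the interior points $c(t)$ to remain in the minimal class.

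I expect the main obstacle to be the rigorous justification of the derivative formulas $h'_+(0)=\max_{s\in A}\mrm df_s(p)(c'(0))$ and $h'_-(1)=\min_{s\in A}\mrm df_s(q)(c'(1))$. One must check that, near an endpoint, the index realizing the maximum is governed purely by the first-order behaviour of the $f_s\circ c$, which relies precisely on all these functions agreeing at the endpoint — a consequence of being in the minimal class — and on the finiteness of $A$ so that the maximum is over finitely many smooth functions. Everything else is a short convexity computation.
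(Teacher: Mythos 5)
Your proof is correct. The paper actually states Lemma~\ref{lem:eutactic-unicity} without any proof, so there is nothing to match it against; but your argument is a valid version of the intended one, and every step you flag as delicate does go through: $A=\Sys(p)$ is finite, the max of finitely many strictly convex functions is strictly convex, the Danskin-type formulas $h'_+(0)=\max_{s\in A}\mrm df_s(p)(c'(0))$ and $h'_-(1)=\min_{s\in A}\mrm df_s(q)(c'(1))$ hold precisely because all the $f_s\circ c$ coincide at the endpoints, and strict convexity forces $h'_+(0)<h'_-(1)$. The only remark worth making is that the auxiliary function $h$ is not really needed: the barycentric relation at $p$ already produces a single index $s_0\in A$ with $\mrm df_{s_0}(p)(c'(0))\ge 0$, and strict convexity of $f_{s_0}\circ c$ alone then gives $f_{s_0}(q)>f_{s_0}(p)$, i.e.\ $\mu(q)>\mu(p)$ since both points lie in the minimal class of $A$; the symmetric inequality at $q$ yields the contradiction directly, with no one-sided-derivative bookkeeping for the maximum. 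Your observation that only the barycentric relation (not full eutaxy, and not membership of the interior points $c(t)$ in the minimal class) is used is accurate and applies equally to this shorter variant.
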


%%%%%%%%%%%%%%%%%%%%%%%%%%%%%%%%%%%%
\part{Critical points of the systole of arcs}         % PART : SYSTOLE OF ARCS
%%%%%%%%%%%%%%%%%%%%%%%%%%%%%%%%%%%%

 In this part we determine all the critical points of $\sys_B$:

\begin{theorem}
 The critical points for $\sys_B$ are in bijection with the systems of arcs of $\arcs(B)$ that decompose $S$ into polygons. More precisely, to any such arc system $\{\a_1,\ldots,\a_n\}$ corresponds the unique point $[m]\in\teich(S,B)$ such that the arcs realizing $\sys_B$ belong to $\a_1,\ldots,\a_n$ and decompose $(S,m)$ into semi-regular right-angled polygons (a polygon is semi-regular if its lengths alternate between two values).
\end{theorem}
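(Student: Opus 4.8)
The plan is to identify the critical points of $\sys_B$ with its eutactic points and then to read off both the polygonal decomposition and the semi-regularity from the eutaxy condition. Since condition $(H)$ holds for $\sys_B$ (Theorem~\ref{thm:shearing}), Akrout's Theorem~\ref{thm:akrout} tells us that the critical points are exactly the eutactic points, so throughout I work with a point $[m]$ at which the set $A=\Sys([m])$ of isotopy classes realizing $\sys_B(m)$ satisfies $0\in$ affine interior of $\mathrm{conv}\{\mrm d\ell_\a([m])~;~\a\in A\}$. By the disjointness lemma of the first part, the arcs in $A$ are represented by pairwise disjoint geodesics orthogonal to $\partial S$, so $A$ cuts $(S,m)$ into finitely many complementary regions, each bounded by arc-segments and boundary-segments meeting at right angles. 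The goal is to show that each region is a semi-regular right-angled polygon, and that this configuration is rigid.

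First I would prove that every complementary region is a disk. Suppose some region $R$ is not a polygon; then $R$ carries an essential arc or simple closed curve in its interior, giving $R$ extra geometric ``room''. I would exhibit a deformation of $m$, supported near $R$ and compatible with the linear constraint defining $\teich(S,B)$, which does not decrease the length of any arc of $A$ and strictly increases at least one of the arcs bordering $R$. Such a deformation $w$ satisfies $\mrm d\ell_\a([m])(w)\geq 0$ for all $\a\in A$ with strict inequality for some $\a$; pairing with the eutaxy relation $\sum_\a\l_\a\,\mrm d\ell_\a([m])=0$ (all $\l_\a>0$) then forces $0=\sum_\a\l_\a\,\mrm d\ell_\a([m])(w)>0$, a contradiction. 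Hence $A$ decomposes $S$ into right-angled polygons. Producing this length-nondecreasing, somewhere-increasing deformation uniformly over all the ways a complementary region can fail to be a disk (including the rigid exceptional pieces) is the step I expect to be the main obstacle.

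Once the decomposition is polygonal, I would fix its combinatorial type and embed $\teich(S,B)$ into the product of the spaces of right-angled polygons of the relevant types, as was done for the extreme points in the proof of Theorem~\ref{thm:max-arcs}; the image is cut out by the gluing conditions identifying the two lengths assigned to each shared arc, together with the linear constraint on $\ell(B)$. On this model each $\ell_\a$ becomes an explicit function of the polygon side lengths, and eutaxy reads as the existence of weights $\l_\a>0$ with $\sum_\a\l_\a\,\mrm d\ell_\a([m])=0$ in $T^\ast_{[m]}\teich(S,B)$. Using the trigonometric relations between sides and angles of right-angled polygons, I would show that this vectorial identity holds if and only if, inside each polygon, all boundary-segments share a common length (the arc-segments being automatically equal to $\sys_B(m)$), that is, the polygon is semi-regular. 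This is the arc analogue of the ``regular polygon'' condition governing $\sys_p$, and it is the second place where the explicit form of the differentials of the length functions is essential.

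Finally, for the bijection I would argue existence and uniqueness of the critical point attached to a fixed arc system $A$ decomposing $S$ into polygons. Given the combinatorial type, the semi-regular right-angled metric is obtained by solving the (monotone) system of gluing and boundary-length equations, which admits a unique solution; this yields a point $[m]$ whose systolic arcs are exactly $A$ and which is eutactic by the computation above, hence critical by Theorem~\ref{thm:akrout}. Uniqueness within the minimal class of $A$ follows from Lemma~\ref{lem:eutactic-unicity}: any two points with $\Sys=A$ are joined by a path along which the $\ell_\a$ are strictly convex, by condition $(H)$ and Theorem~\ref{thm:shearing}, so the minimal class contains at most one eutactic point; Lemma~\ref{lem:eutactic-minimum} guarantees it is the strict minimum of $\sys_B$ there. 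Combined with the forward direction, which assigns to each critical point its own systolic system, this produces the announced bijection between critical points of $\sys_B$ and systems of arcs of $\arcs(B)$ decomposing $S$ into polygons.
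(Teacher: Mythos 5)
Your overall strategy coincides with the paper's: reduce to eutactic points via Theorem~\ref{thm:akrout}, show the systolic arcs cut $S$ into polygons, embed $\teich(S,B)$ into a product of spaces of right-angled polygons, and get uniqueness in each minimal class from Lemma~\ref{lem:eutactic-unicity}. But the two steps you leave open are precisely where the paper does its work, and one of them you have made harder than it needs to be. For the polygonal-decomposition step, you try to contradict the eutaxy relation $\sum_\a \l_\a\,\mrm d\ell_\a([m])=0$ by producing a deformation that is length-nondecreasing on $\Sys([m])$ and strictly increasing somewhere; you correctly flag the construction of such a deformation as the main obstacle, and indeed it is not clear how to produce it in general. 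The paper avoids this entirely: if the systolic arcs do not fill, they miss some essential simple closed geodesic $\g$, and a small Fenchel--Nielsen twist along $\g$ moves $[m]$ inside its minimal class (and inside $\teich(S,B)$, since boundary lengths are untouched) without changing any $\ell_\a$ for $\a\in\Sys([m])$, hence without changing $\sys_B$. This contradicts Lemma~\ref{lem:eutactic-minimum}, which says an eutactic point is a \emph{strict} minimum of $\sys_B$ in its minimal class. Note that the twist gives $\mrm d\ell_\a(w)=0$ for all systolic $\a$, so it yields nothing against the differential eutaxy relation --- the strict-minimum characterization is the right tool here, not the one you chose.

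The second gap is in your existence step: after solving the gluing and boundary-length equations to produce the semi-regular right-angled metric $[m]$ attached to an arc system $A$, you assert that the systolic arcs of $[m]$ are exactly $A$. This is not automatic and cannot be skipped: the eutaxy computation (via the identity expressing $\mrm d\B$ as a negative combination of the differentials of the arc-side lengths on the semi-regular locus) only certifies eutaxy relative to $\Sys([m])$, so if some arc not in $A$, or some arc crossing the polygons, were at least as short, the whole argument would collapse. The paper proves this with explicit trigonometric estimates: in each semi-regular polygon it bounds from below the length of the common perpendicular between any two non-adjacent sides and checks it strictly exceeds the common arc-side length, so that any essential arc not isotopic into $A$ is strictly longer. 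You should either carry out these estimates or find a substitute; without them the claimed bijection has only been verified in one direction.
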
 

\begin{remark}
The unicity part of the statement is a direct application of Lemma~\ref{lem:eutactic-unicity} and Theorem~\ref{thm:shearing}.
\end{remark}

 We recall another time our assumptions and notations. The surface $S$ is compact with negative Euler characteristic. We assume that $S$ has nonempty boundary, and we consider a nonempty set $B=\{b_1,\ldots, b_l\}$ of boundary components of $S$. We recall that $\teich(S,B)$ is the Teichm\"uller space of hyperbolic metric on $S$ that satisfy two conditions: the sum $\ell(B)$ of the lengths of the boundary components in $B$ is fixed, as is the length of any other boundary component. We allow a boundary component to have length zero. An \emph{arc system} is a family of disjoint isotopy classes of essential arcs.\par

 According to Akrout theorem (Theorem~\ref{thm:akrout}) the critical points of the $\sys_B$ are exactly its eutactic points. 

\begin{lemma}
Let $[m]\in\teich(S,B)$ be an eutactic point of $\sys_B$, then the arcs realizing $\sys_B$ decompose $S$ into polygons.
\end{lemma}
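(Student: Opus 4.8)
The plan is to argue by contraposition: I will show that if the arcs realizing $\sys_B$ do not decompose $S$ into polygons, then $[m]$ cannot be eutactic. First recall from the preceding lemmas that the systolic arcs $\Sys(m)=\{\a_1,\ldots,\a_n\}$ are pairwise disjoint, so they cut $S$ into finitely many complementary regions with piecewise geodesic boundary meeting $\partial S$ orthogonally. I would use the following reformulation of eutaxy, read off directly from the definition: since eutaxy requires the origin to lie in the affine interior of the convex hull of $(\mathrm d\ell_{\a_i}(m))_i$, and a point of that relative interior is a \emph{strictly} positive combination of the $\mathrm d\ell_{\a_i}(m)$, the point $[m]$ is \emph{not} eutactic as soon as there is a tangent vector $v\in T_m\teich(S,B)$ with $\mathrm d\ell_{\a_i}(m)(v)\ge 0$ for every $i$ and strict inequality for at least one $i$. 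Indeed, pairing such a $v$ with any relation $\sum_i\lambda_i\,\mathrm d\ell_{\a_i}(m)=0$ with all $\lambda_i>0$ would yield $0>0$. Hence it suffices to produce such a $v$.

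Assume then that the $\a_i$ do not decompose $S$ into polygons. Then some complementary region $P$ is neither a disk whose sides alternate between arc-segments and boundary-segments, nor a bigon enclosing a boundary component outside $B$; consequently $P$ carries an essential simple closed geodesic $c$ disjoint from all the $\a_i$ (either non-peripheral, or peripheral to an uncut component of $B$). The deformation I would use is the Fenchel--Nielsen \emph{length} deformation $v=\partial/\partial\ell_c$ --- crucially not the twist along $c$, whose derivatives $\mathrm d\ell_{\a_i}(m)(\mathrm{tw}_c)=\sum_{\a_i\cap c}\cos\theta$ all vanish and merely witness non-perfection. Since $c$ is interior, $v$ fixes every boundary length, so $v\in T_m\teich(S,B)$. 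Arcs whose geodesic representative avoids the two pairs of pants adjacent to $c$ are unaffected, so $\mathrm d\ell_{\a_i}(m)(v)=0$; for an arc-segment crossing such a pair of pants between the two cuffs other than $c$, the pants relation $\cosh\ell=\dfrac{\cosh(\ell_c/2)+\cosh(\ell_x/2)\cosh(\ell_y/2)}{\sinh(\ell_x/2)\sinh(\ell_y/2)}$ gives $\partial\ell/\partial\ell_c>0$.

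The main obstacle is precisely to place $c$ inside $P$, adjacent to the arc system, so that \emph{every} systolic arc meets the pants neighbouring $c$ only in segments opposite to $c$ (and so that at least one does meet them); this guarantees that all the derivatives $\mathrm d\ell_{\a_i}(m)(v)$ are $\ge 0$ with at least one strictly positive. This is a geometric placement problem, to be resolved by a careful local analysis of how the hexagonal pieces bordering $c$ sit against the systolic arcs. Granting it, $v$ is the desired vector, $[m]$ is not eutactic, and by Akrout's theorem (Theorem~\ref{thm:akrout}) not critical --- which is the contrapositive. In the degenerate case where the obstruction to being a polygon is an uncut component of $B$, one replaces the interior length deformation by an admissible redistribution of the lengths within $B$, the same pair-of-pants monotonicity again yielding the sign one needs.
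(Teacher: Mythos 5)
There is a genuine gap, and you name it yourself: the ``geometric placement problem'' of situating $c$ so that every systolic arc has nonnegative derivative under $\partial/\partial\ell_c$ with at least one strictly positive is precisely the content of the argument, and it is left unresolved (``Granting it\dots''). It is also doubtful that it can be resolved in the form you state: the vector $\partial/\partial\ell_c$ depends on a choice of full Fenchel--Nielsen system; a systolic arc may traverse the pairs of pants adjacent to $c$ along segments that are not the standard cuff-to-cuff perpendicular your pants formula controls; and if $c$ can be pushed away from all the systolic arcs, every pairing is $0$ and the eutaxy relation gives no contradiction at all. So the strategy of extracting a strict sign inequality from a length deformation is both the hard part and the part that is missing.

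The paper's proof avoids this entirely, and you dismissed its mechanism too quickly. It takes exactly the twist along the disjoint simple closed geodesic $\g$ that you reject. You are right that the twist pairs to zero against every $\mrm d\ell_{\a_i}(m)$ and hence cannot contradict the eutaxy relation directly; but that is not how it is used. Since the systolic arcs are disjoint from $\g$, their geodesic representatives and lengths are unchanged along the whole twist path (not just to first order), so for small twists the point stays in the same minimal class with the same value of $\sys_B$. This contradicts Lemma~\ref{lem:eutactic-minimum}, which asserts that an eutactic point is a \emph{strict} minimum of $\sys_B$ in its minimal class. No sign analysis, no choice of coordinates, no placement problem. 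Your contrapositive framing and your identification of the disjoint curve $c$ are correct and match the paper; the missing idea is to trade the pairing inequality for the strict-minimality characterization of eutactic points.
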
 

\begin{proof}
It the arcs realizing $\sys_B(m)$ do not decompose $S$ into polygons, then they do not intersect a simple closed geodesic $\g$. A small twist along $\g$ does no change the value of $\sys_B(m)$ and its minimal class. We conclude with Lemma~\ref{lem:eutactic-minimum}.
\end{proof} 
 
So, inorder to prove the theorem, we have just to show that the points of $\teich(S,B)$ described in the theorem are indeed eutactic.
To simplify the exposition we only consider the case where $B$ is the full set of boundary components. But all our proofs extend readily to other cases.

%%%%%%%%%%%%%%%%%%%%%%%%%
\section{Spaces of right angled polygons}
%%%%%%%%%%%%%%%%%%%%%%%%%

 In the sequel \emph{polygon} means a convex polygon of the hyperbolic plane $\Hyp$.

\subsection{Spaces of right angled polygons}%%%%%%%%%%%%%%%
Given a number $n\geq 5$, we denote by $\poly(n)$ the set of isometry classes of \emph{marked} right angled polygons with $n$ sides, where a marking consists in a cyclic numbering of the sides. The cyclic group $\Z/n\Z$ is acting on $\poly(n)$ by permutation of the marking.\par

 Let $\ell_i:\poly(n)\rightarrow\R$ be the length function of the $i$-th side, the map $(\ell_1,\ldots,\ell_n)$ defines an injection from $\poly(n)$ to $\R^n$. The common perpendicular between the first side and the sides numbered from $4$ to $n-2$ decompose a polygon in $\poly(n)$ into marked right angled pentagons.
 Let $h_i$ be the length of the common perpendicular between the first side and the $i$-th side ($i\neq 2,n-2$). As the isometry class of right angled pentagon is determined by the length of any two sides, it comes that the map $(\ell_3,h_4,h_5,\ldots,h_{n-2},\ell_{n-1})$ establishes a bijection between $\poly(n)$ and $(\R_+^\ast)^{n-3}$. The lengths $\ell_i$ are easily expressed in terms of the coordinates $(\ell_3,h_4,h_5,\ldots,h_{n-2},\ell_{n-1})$, therefore the image of $\poly(n)$ by $(\ell_1,\ldots,\ell_n)$ is a smooth submanifold of codimension $3$.\par
 
 We endow $\poly(n)$ with this smooth structure. Actually, we won't distinguish $\poly(n)$ from its image in $\R^n$. The action of $\Z/n\Z$ on $\poly(n)$ is the restriction of the action of $\Z n/\Z$ on $\R^n$ by permutation of coordinates. This action is by isometries with respect to the canonical Euclidean structure of $\R^n$, so $\Z/n \Z$ acts by isometries on $\poly(n)$ with respect to the induced Riemannian metric.
  
\subsection{Tangent space}\label{sec:tangent}%%%%%%%%%%%%%%% 
We fix an index $i\in\{1,\ldots,n\}$ and a point $P$ in $\poly(n)$. We denote by $f_i:\poly(n)\rightarrow\R$ the length function of the common perpendicular to the sides $i-1$ and $i+2$. This common perpendicular divide any element of  $\poly(n)$ into two right angled marked polygons, one of them being a pentagon. A variation of the pentagon while keeping the complementary polygon identical defines a germ of curve in $\poly(n)$:
 $$\begin{array}{cccc}
c  : & (-\e,\e) & \longrightarrow & \poly(n) \\
      & t & \longmapsto & (\ell_1(t),\ldots,\ell_n(t))
\end{array}$$
satisfying $c(0)=P$. Keeping the complementary polygon unchanged is equivalent to keeping the functions  $f_i$ and $\ell_j$ with $j\notin\{i-1,i,i+1,i+2\}$ constant along $c$.
The following equalities define a germ of curve:
$$\left\{\begin{array}{rll}
(\ell_1(0),\ldots,\ell_n(0)) & = & (\ell_1(P),\ldots,\ell_n(P)),\\
\ell_j(t) & = & \ell_j(0)\ \textnormal{for}\ j\notin\{i-1,i,i+1,i+2\}, \\
\ell_i(t) & = & \ell_i(0)+t, \\
\sinh\ell_i(t)\sinh\ell_{i+1}(t) & = & \cosh f_i(P). \\
\end{array}\right. $$ 
The lengths $\ell_{i-1}$ and $\ell_{i+2}$ are expressed in terms of $\ell_i,\ell_{i+1},f_i$ using trigonometric identities:  
$$\left\{\begin{array}{rll}
\ell_{i-1}(t) & = & \a+\b(t)\quad \textnormal{with}\ \a>0\ \textnormal{constant and} \\
 \sinh\b(t) \sinh f_i(P) & = & \cosh\ell_{i+1}(t).  \\
 \ell_{i+2}(t) & = & \g+\d(t)\quad \textnormal{with}\ \g>0\ \textnormal{constant and} \\
 \sinh\d(t) \sinh f_i(P) & = & \cosh\ell_{i}(t).  
\end{array}\right.$$ 
Differentiating the above equalities we get:
$$\left\{\begin{array}{rll}
\ell_i'(t) & = & 1,\\
\coth\ell_i(t)\ell_i'(t) & = & -\coth\ell_{i+1}(t)\ell_{i+1}'(t), \\
\ell_{i-1}'(t) &= & \b'(t), \\
\cosh\b(t) \sinh f_i(P) \b'(t) & = & \sinh\ell_{i+1}(t) \ell_{i+1}'(t), \\
\ell_{i+2}'(t) &= & \d'(t), \\
\cosh\d(t) \sinh f_i(P) \d'(t) & = & \sinh\ell_i(t) \ell_i'(t).
\end{array}\right.$$
With the following simplification:
$$\b'(t) = - \frac{\sinh\ell_{i+1}(t)}{\cosh\b(t)\sinh f_i(P)} \frac{\tanh\ell_{i+1}(t)}{\tanh\ell_{i}(t)}= - \frac{1}{\sinh\d(t)\sinh f_i(P)} \frac{\tanh\ell_{i+1}(t)}{\tanh\ell_{i}(t)}=-\frac{\tanh\ell_{i+1}(t)}{\sinh\ell_i(t)},$$
we obtain:
$$c'=\frac{1}{\sinh\ell_i\cosh\ell_{i+1} }\left(0,\dots,0, -\sinh\ell_{i+1},\sinh\ell_i \cosh\ell_{i+1},-\sinh \ell_{i+1}\cosh\ell_i,\sinh\ell_i,0,\ldots,0\right).$$
The tangent vector $c'(0)$ belongs to $T_P\poly(n)$. Thus we define a vector field $u_i$ on $\poly(n)$ by setting $u_i(P)=\sinh\ell_i(P)\cosh\ell_{i+1}(P) c'(0)$ for every $P\in \poly(n)$. Clearly, any choice of $n-3$ elements in $\{u_i~;~ i=1,\ldots,n\}$ gives a trivialisation of the tangent bundle $T\poly(n)$. 

\subsection{Semi-regular polygons}%%%%%%%%%%%%%%%%%
We say a right angled polygon is \emph{semi-regular} if the lengths of its sides alternate between two values. A right angled polygon $P\in\poly(2n)$ is \emph{semi-regular} if and only if it is fixed by the diffeomorphism $\f:\poly(2n)\rightarrow\poly(2n)$ defined by  
$$\f  (\ell_1,\ell_2,\ldots,,\ell_{2n-2},\ell_{2n-1},\ell_{2n})  = (\ell_3,\ell_4,\ldots,\ell_{2n},\ell_1,\ell_{2}).$$
This diffeomorphism corresponds to the action of the cycle $i\mapsto i+2$ of $\Z/n \Z$. As the fixed point locus of an isometry, the set of semi-regular right angled polygons is a totally geodesic submanifold $\V(2n)$ of $\poly(2n)$ with respect to the metric induced by the canonical Euclidean metric of $\R^{2n}$. A semi-regular polygon is completely determined by the lengths $\ell_1$ and $\ell_2$, that are related by the identity $\sinh(\ell_1/2)\sinh(\ell_2/2)=\cos(\pi/n)$. We deduce that $\V(2n)$ is diffeomorphic to $\R$, so it is a geodesic of $\poly(2n)$.

\subsection{A relation between the differentials of the length functions}\label{sec:relation}%%%%%%%%%%%%%%%%%
Let us look at the $1$-forms $\sum_{i=0}^n \mrm d \ell_{2i+1}$ and $\sum_{i=0}^n \mrm d \ell_{2i}$ defined over $\poly(2n)$. 
These $1$-forms are invariant under the action of $\f$. At a point of $\V(2n)$, the vectors $u_i$ have a very simple expression, so that we can evaluate the $1$-forms on these vectors. We find that the following relation of proportionality
$$-\frac{1+\cosh\ell_2}{\sinh\ell_2}\left(\sum_{i=0}^n \mrm d \ell_{2i} \right) =\frac{1+\cosh\ell_1}{\sinh\ell_1} \left(\sum_{i=0}^n \mrm d \ell_{2i+1} \right)$$
is valid over $\V(2n)$. Let us stress that the $1$-forms on the two sides of the equality agree on the tangent space of $\poly(2n)$ at any point of $\V(2n)$, this is much stronger than an equality on the tangent space of $\V(2n)$.

%%%%%%%%%%%%%%%%%%%%%%%%%
\section{Construction of the critical points}
%%%%%%%%%%%%%%%%%%%%%%%%%   

In this section we show that, if the arcs realizing $\sys_{\partial S}(m)$ decompose $S$ into semi-regular right-angled polygons, then the point $[m]\in\teich(S,\partial S)$ is eutactic.

\subsection{Embedding of the Teichmüller space}%%%%%%%%%%%%%%%%%
 Let $\a\in\arcs_0(\Sigma)$ be a system of arcs that define a cell decomposition of $S$.
Let us fix a smooth realization of $\a$ transverse to $\partial S$, and call $P_1,\ldots, P_k$ the complementary regions. We denote by $2n_i$ the number of sides of the cell $P_i$, and we fix a cyclic numbering of the sides, with the convention that sides with an odd number come from the boundary $\partial S$.\par

 We introduce the map 
$$\begin{array}{cccc}
P: & \teich(S,b) & \longrightarrow & \poly(2n_1)\times\ldots\times\poly(2n_k) \\
 & [m] & \longmapsto & (P_1(m),\ldots,P_k(m))
\end{array}$$ 
where $P_i(m)$ is marked right angled polygon in $(S,m)$ corresponding to $P_i$. To be more precise, the $P_i(m)$ is obtained by cutting the geodesic arcs orthogonal to $\partial S$ that belong to the isotopy classes of $\a$. In particular, the $P_i(m)$'s depend only on the isotopy class of $m$.

\begin{notation}
To simplify, we denote by $\poly(n_1,\ldots,n_k)$ the product $\poly(2n_1)\times\ldots\times\poly(2n_k)$.
We denote by $\ell_{i,j}:\poly(n_1,\ldots,n_k)\rightarrow\R$ the length function of the $j$-th side of the $i$-th polygon, so $\ell_{i,j}(P_1,\ldots,P_k)=\ell_j(P_i)$. As usual, we do not distinguish $\poly(n_1,\ldots,n_k)$ and its image in $\R^{2n_1}\times\ldots\times \R^{2n_k}$ \emph{via} the embedding given by the side length functions.
\end{notation}

 The map $P$ is clearly smooth and injective. Its image is caracterized by the following condition~: \emph{equality of the lengths of sides coming from the same arc}. The identifications between the sides of the $P_i$'s define an involution of $\{(i,j)~;~1\leq i\leq k\textnormal{ and } 1\leq j\leq 2n_i\}$, and a linear involution $\Psi$ of $\R^{2n_1+\ldots+2n_k}$. This involution is an isometry with respect to the canonical Euclidean metric. Therefore the fixed points locus is a totally geodesic submanifold, that is $P(\teich(\Sigma))\subset\poly(n_1,\ldots,n_k)$ is a totally geodesic submanifold.\par
 
\begin{proposition}
The map $P:\teich(\Sigma)\rightarrow \poly(n_1,\ldots,n_k)$ is a smooth embedding whose image is $\fix(\psi)$.
\end{proposition}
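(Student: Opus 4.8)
The plan is to verify smoothness and injectivity, to identify the image with $\fix(\Psi)$, and finally to upgrade the resulting bijection to a diffeomorphism onto $\fix(\Psi)$. Smoothness is immediate: each coordinate $\ell_{i,j}\circ P$ is the length of a geodesic arc---either a subarc of $\partial S$ or a geodesic arc orthogonal to $\partial S$ in one of the classes of $\a$---and such length functions are smooth on $\teich(\Sigma)$. The inclusion $P(\teich(\Sigma))\subseteq\fix(\Psi)$ is equally direct: the two coordinates of $\poly(n_1,\ldots,n_k)$ corresponding to the two sides glued along a given arc $\a_s$ both equal $\ell_{\a_s}(m)$, so $P([m])$ satisfies the matching conditions that define the involution $\Psi$.

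The substantive step is the reverse inclusion $\fix(\Psi)\subseteq P(\teich(\Sigma))$, which I would obtain by an explicit geometric reconstruction. Given $Q=(Q_1,\ldots,Q_k)\in\fix(\Psi)$, each $Q_i$ is a genuine right-angled $2n_i$-gon and, by $\Psi$-invariance, the two sides attached to each arc of $\a$ have equal length; I glue the $Q_i$ along these paired sides, following the combinatorial pattern of the cell decomposition, by the unique isometry identifying the matched sides. One then checks that this produces a point of $\teich(\Sigma)$: the quotient is homeomorphic to $S$ since the gluing pattern is exactly that of $\a$, the metric is hyperbolic away from the vertices, and every vertex lies on $\partial S$ because the arc system has no interior vertex. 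At such a vertex the two incident polygon corners are right angles whose sum is $\pi$, so $\partial S$ stays geodesic and no cone point appears; hence the glued metric $m$ is an honest element of $\teich(\Sigma)$ with $P([m])=Q$. Feeding two preimages of the same $Q$ into this canonical reconstruction also reproves injectivity.

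It remains to check that $P$ is an embedding and not merely a continuous bijection. For this I would note that the reconstruction defines a map $G\colon\fix(\Psi)\to\teich(\Sigma)$ inverse to $P$, and that $G$ is smooth: a right-angled polygon depends smoothly on its side lengths, and the marked hyperbolic structure obtained by gluing depends smoothly on the gluing data, so the holonomy---and hence the point of $\teich(\Sigma)$---varies smoothly with $Q$. Since $\fix(\Psi)$ is an embedded (totally geodesic) submanifold of $\poly(n_1,\ldots,n_k)$ and $P,G$ are mutually inverse smooth maps, $P$ is a diffeomorphism onto $\fix(\Psi)$, in particular a smooth embedding. Alternatively one can check directly that $\mrm dP$ is injective, using the vector fields $u_i$ of Section~\ref{sec:tangent} as convenient frames, and conclude that $P$ is an injective immersion onto the submanifold $\fix(\Psi)$ by comparing dimensions.

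I expect the geometric reconstruction to be the only real obstacle. The delicate point is that $\Psi$ matches only the \emph{lengths} of paired sides, not a full isometric identification chosen in advance; one must therefore confirm that gluing by the induced isometries yields a smooth hyperbolic surface with geodesic boundary of the correct topological type. Once this is settled, the embedding property follows formally from the existence of the smooth inverse $G$.
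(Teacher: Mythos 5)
Your proof is correct in substance but takes a genuinely different route from the paper. The paper disposes of the proposition in three lines by a doubling trick: it passes to the closed oriented double $\hat S$ of $S$, lifts the arc system $\a$ to a curve system, extends it to a pants decomposition, and invokes the fact that Fenchel--Nielsen coordinates form a global chart of $\teich(\hat S)$; the embedding property and the identification of the image then follow from the correspondence between the polygon data and the Fenchel--Nielsen lengths and twists. You instead argue directly on $S$ by cut-and-paste: you show $P(\teich(\Sigma))\subseteq\fix(\Psi)$ from the definition of $\Psi$, and you prove the reverse inclusion by regluing the right-angled polygons along their matched even sides, checking that the right angles sum to $\pi$ at each boundary vertex so that the result is an honest hyperbolic surface with geodesic boundary marked by the combinatorics of $\a$. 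Your approach buys a concrete, self-contained description of the inverse map and avoids any appeal to the double cover or to Fenchel--Nielsen theory; the paper's approach buys brevity and outsources all the analytic content (smoothness of the inverse, the submanifold structure of the image) to a standard theorem. The one place where your argument is thinner than it should be is the final smoothness step: the assertion that the glued marked structure, hence the holonomy, depends smoothly on $Q$ deserves a real argument, and your alternative route via injectivity of $\mrm dP$ plus a dimension count presupposes that $\fix(\Psi)\cap\poly(n_1,\ldots,n_k)$ is a submanifold of the expected dimension --- note that $\Psi$ permutes coordinates \emph{across} the factors $\poly(2n_i)$, so this intersection being transverse is not automatic and is precisely what the paper's Fenchel--Nielsen argument (or your explicit inverse, once its smoothness is established) delivers. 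Neither gap is fatal, but you should close one of the two loops explicitly.
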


\begin{remark}
In particular, $P$ is proper.
\end{remark}

\begin{proof}
Let $\hat S$ be the closed oriented double cover of $S$, the arcs system $\a$ lifts to a curve system of $\hat S$. We can extend this curve system to a pants decomposition. The proposition comes immediately from the fact that Fenchel-Nielsen coordinates are coordinates (define a global chart of $\teich(\hat S)$).
\end{proof}

\subsection{The boundary length function}%%%%%%%%%%%%%%%%%
Let $\B:\poly(n_1,\ldots,n_k)\rightarrow\R$ be the function defined by:
 \begin{eqnarray*}
 \B & = & \sum_{1\leq i\leq k} \sum_{j\mathrm{odd}} \ell_{i,j}.
 \end{eqnarray*}
It satisfies the identity
$$\B\circ P=\ell_\partial$$
over $\teich(S)$, where $\ell_\partial$ is the total length of the boundary. 
We observe that $\B$ is the restriction to $\poly(n_1,\ldots,n_k)$ of a linear form over $\R^{2n_1+\dots+2n_k}$. We deduce that its restriction to $P(\teich(\Sigma))$ is  a proper submersion, therefore it is a fibration according to Ehresmann theorem.

Using the equality of \textsection~\ref{sec:relation} we express the differential $\mrm d \B$ as a linear combination of the $\mrm d \ell_{i,j}$ with $j$ even~:
\begin{eqnarray*}
\mrm d \B & = & \sum_{1\leq i\leq k} \sum_{j\mathrm{odd}}\mrm d \ell_{i,j},\\
 & = & - \sum_{1\leq i\leq k} \left(\frac{1+\cosh\ell_{i,2}}{1+\cosh\ell_{i,1}}\cdot \frac{\sinh\ell_{i,1}}{\sinh\ell_{i,2}}\right) \left(\sum_{j\mathrm{even}}\mrm d \ell_{i,j}\right)
\end{eqnarray*}
over $\cup_{\V(n_1,\ldots,n_k)} T_P\poly(n_1,\ldots,n_k)$ where $\V(n_1,\ldots,n_k)=\V(2n_1)\times\ldots\times \V(2n_k)$.

\subsection{Proof of the theorem}%%%%%%%%%%%%%%%%%

\begin{lemma}
For every $b>0$ there exists a unique point $[m]$ of $\teich(S,b)$ such that $P(m)$ belongs to $\V(n_1,\ldots,n_k)$.\end{lemma}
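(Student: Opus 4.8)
The plan is to parametrise the intersection $\fix(\Psi)\cap\V(n_1,\ldots,n_k)$ by a single real variable and then invert the boundary-length function $\B$ along it. First I would describe $\V(n_1,\ldots,n_k)$ concretely: a point of this submanifold is a $k$-tuple of semi-regular polygons, so in the $i$-th factor all odd-numbered sides share one length $a_i$ and all even-numbered sides share one length $c_i$, the two being tied by the semi-regularity relation $\sinh(a_i/2)\sinh(c_i/2)=\cos(\pi/n_i)$. Thus each factor $\V(2n_i)$ is a copy of $(0,\infty)$ with coordinate $c_i$, and $\V(n_1,\ldots,n_k)$ is parametrised by $(c_1,\ldots,c_k)$.

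Next I would intersect with $\fix(\Psi)=P(\teich(S))$. The involution $\Psi$ identifies the sides coming from the same arc, that is the even sides; imposing equality of identified lengths forces $c_i=c_{i'}$ whenever the polygons $P_i$ and $P_{i'}$ are glued along an arc of $\alpha$. Since $S$ is connected, the dual graph of the decomposition (vertices the polygons, edges the arcs) is connected, so all the $c_i$ collapse to one common value $y>0$. Hence $\fix(\Psi)\cap\V(n_1,\ldots,n_k)$ is a single arc, parametrised by $y\in(0,\infty)$, along which $a_i=a_i(y)=2\,\mathrm{arcsinh}\!\left(\cos(\pi/n_i)/\sinh(y/2)\right)$.

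Finally I would evaluate $\B$ along this arc. As $P_i$ carries $n_i$ odd sides, we get $\B=\sum_{i=1}^k n_i\,a_i(y)$. Each $a_i(y)$ is continuous and strictly decreasing in $y$, with $a_i(y)\to+\infty$ as $y\to 0^+$ and $a_i(y)\to 0$ as $y\to+\infty$; therefore $\B$ restricts to a homeomorphism of $(0,\infty)$ onto $(0,\infty)$. For a given $b>0$ there is then a unique $y$ with $\B=b$, hence a unique point of $\fix(\Psi)\cap\V(n_1,\ldots,n_k)$ of boundary length $b$. Since $P$ is a bijection onto $\fix(\Psi)$ with $\B\circ P=\ell_\partial$, this point is the image of a unique $[m]\in\teich(S,b)$ with $P(m)\in\V(n_1,\ldots,n_k)$, which is the assertion of the lemma.

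The only genuinely non-formal step is the collapse of the $c_i$ to a common value $y$: it rests on connectivity of the decomposition together with the observation that semi-regularity already forces the even sides \emph{within} each polygon to be equal, so that the relations encoded by $\Psi$ reduce to identifications \emph{between} polygons. Once this reduction is in place, everything else is a routine one-variable monotonicity argument, and I do not expect any further obstacle.
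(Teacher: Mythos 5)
Your proof is correct and follows essentially the same route as the paper: parametrise $P(\teich(S))\cap\V(n_1,\ldots,n_k)$ by the common even-side length, write $\B$ as $2\sum_i n_i\,\mathrm{arcsinh}\bigl(\cos(\pi/n_i)/\sinh(y/2)\bigr)$ along this arc, and invert by strict monotonicity. The only difference is that you spell out, via connectivity of the dual graph of the decomposition, why the even-side lengths must all coincide --- a step the paper merely asserts --- which is a welcome addition rather than a deviation.
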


\begin{proof}
First, we observe that the points in $P(\teich(\Sigma))\cap \V(n_1,\ldots,n_k)$ are exactly points in $\V(n_1,\ldots,n_k)$ that have all their even sides (the one corresponding to arcs) of the same length (independently of $P_i$). We deduce that $P(\teich(\Sigma))\cap \V(n_1,\ldots,n_k)$ is a one dimensional submanifold of $\teich(S)$. It is parametrized by the length $\ell_{1,2}$ of even sides. On $P(\teich(\Sigma))\cap \V(n_1,\ldots,n_k)$ we have~:
$$\B=2\sum_{i=1}^k n_k \mrm{arcsinh}\left(\frac{\cos(\pi/n_k)}{\sinh(\ell_{1,2}/2)} \right).$$
So $\B$ is a strictly decreasing function of $\ell_{1,2}$ which tends to $+\infty$ (resp. $0$) when $\ell_{1,2}$ tends to $0$ (resp. $+\infty$). This proves the existence and unicity of $[m]$.\par
\end{proof}

\begin{lemma}
The components of $\a$ are exactly the arcs of minimal length of $[m]$.
\end{lemma}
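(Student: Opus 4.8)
The plan is to compare, at the point $[m]$ furnished by the previous lemma, the length of every essential arc with the common length $L:=\ell_{1,2}(m)$ of the even sides, and to show that the components of $\a$ are the unique minimisers. Since $P(m)$ lies in $\V(n_1,\ldots,n_k)$, each polygon $P_i(m)$ is a semi-regular right-angled polygon, so all its even sides have the same length $L$, and each even side is a genuine side of a right-angled polygon, hence a geodesic arc meeting $\partial S$ orthogonally at both endpoints. Therefore each component $\a_i$ of $\a$ coincides with its own length-minimising geodesic representative, $\ell_{\a_i}(m)=L$, and in particular $\sys_{\partial S}(m)\le L$. It remains to prove that every essential arc $\b$ distinct from all the $\a_i$ satisfies $\ell_\b(m)>L$; this gives at once the reverse inequality and the uniqueness.

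Fix such a $\b$ and let $\g$ be its geodesic representative orthogonal to $\partial S$. Since geodesic representatives realise geometric intersection numbers, I would split the argument according to whether $\b$ meets the arc system $\cup_i\a_i$. If $\intersect(\b,\a_i)=0$ for every $i$, then $\g$ is disjoint from the $\a_i$ and hence contained in the interior of a single polygon $P_i(m)$, which is convex and isometrically embedded; there $\g$ is the common perpendicular of two odd sides of $P_i(m)$. Two odd sides adjacent to a single even side $e$ admit $e$ itself as common perpendicular, so in that case $\g=e$ and $\b$ equals the corresponding $\a_i$, which is excluded; in every other configuration the two odd sides lie farther apart in the cyclic order, and monotonicity of the length of the common perpendicular forces $\ell(\g)>L$. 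If instead $\intersect(\b,\a_i)>0$ for some $i$, then $\g$ crosses the arc system, and I would cut $\g$ at its intersection points into consecutive geodesic segments, each lying in one polygon and running between two of its sides. Bounding each segment below by the corresponding common perpendicular and using the hyperbolic trigonometry of the semi-regular polygons, one shows that the accumulated length already exceeds $L$. In both cases $\ell_\b(m)>L$, so the minimal-length arcs of $[m]$ are exactly the components of $\a$.

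The main obstacle is the trigonometric input in these two geometric cases: one must show, uniformly over the combinatorial position of $\b$ relative to the decomposition, that skipping over or crossing an even side costs strictly more than $L$. For the disjoint case this amounts to the monotonicity of common-perpendicular lengths between two sides of a fixed semi-regular right-angled polygon as they are moved apart, which can be read off from the right-angled pentagon relations already exploited in \textsection\ref{sec:tangent}. The crossing case is the more delicate one, since an individual interior segment need not by itself be longer than $L$, so the contributions of the first and last segments (which leave $\partial S$ orthogonally) must be combined with those of the crossings. A convenient alternative for this estimate would be to double $(S,m)$ across $\partial S$, turning the orthogeodesic arcs into closed geodesics and reducing the problem to a comparison of closed geodesics in the doubled surface, where the semi-regular structure is easier to exploit.
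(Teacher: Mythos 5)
Your overall strategy is the same as the paper's: realize each component of $\a$ by an even side of length $L$, then show that any other essential arc contains a segment joining two sides of some $P_i(m)$ whose common perpendicular is strictly longer than $L$ (or than $L/2$, suitably combined). The disjoint case is essentially complete and matches the paper, which computes $\cosh(\ell(\d)/2)=\cosh h_{i,1}\sin(k\pi/n_i)$ via the center of the polygon and concludes $\ell(\d)\geq \ell_{i,2}$ with equality exactly when $\d$ is an even side; your appeal to monotonicity of the common perpendicular in the cyclic distance is the same fact.

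The genuine gap is in the crossing case, and you have flagged it yourself: the sentence ``one shows that the accumulated length already exceeds $L$'' is precisely the estimate that has to be proved, and neither the segment decomposition nor the proposed doubling trick by itself produces it. The paper closes this by computing the common perpendicular between an \emph{odd} side and a non-adjacent \emph{even} side: writing it via the two segments from the center and the trirectangle relations, and using semi-regularity $\sinh(\ell_{i,1}/2)\sinh(\ell_{i,2}/2)=\cos(\pi/n_i)$, one gets
$$\cosh(\ell(\d)/2)=\cosh h_{i,1}\cosh h_{i,2}\bigl(-\cos(k\pi/n_i)+\cos(\pi/n_i)\bigr)\ \geq\ 2\cosh(\ell_{i,1}/2)\cosh(\ell_{i,2}/2),$$
which bounds each of the two end-segments of a crossing arc (from $\partial S$ to the first, resp.\ last, even side it meets) well above $L/2$, so their sum already exceeds $L$ regardless of what happens in between. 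Without this mixed odd--even estimate your argument does not conclude, since, as you note, interior even-to-even segments can be short and cannot be counted on. A secondary point you share with the paper but should still address if you write this up: one must argue that a geodesic arc orthogonal to $\partial S$ and not isotopic into $\a$ really does contain a segment joining two \emph{non-adjacent} sides (a segment cutting off a right-angled corner near a vertex can be arbitrarily short, so adjacent-side segments must be excluded or absorbed into the endpoint segments).
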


\begin{proof}
We work with the polygon $P_i(m)$. The distance between two non adjacent sides of $P_i(m)$ is equal to the length of their common perpendicular. In order to compute this length, we first determine the length between these sides and the center $C$ of the polygon $P_i(m)$.\par
 Let us consider two consecutive sides of $P_i(m)$, the segments orthogonal to these sides and emanating from $C$ bound a trirectangle $T$ in $P_i(m)$. In this trirectangle, the sides opposite to $C$ have length $\ell_{i,1}/2$ and $\ell_{i,2}/2$, and the angle at $C$ is $\pi/n$. Let $h_{i,1}$ (resp. $h_{i,2}$) be the distance between $C$ and any side with odd number (resp. even number). Using classical trigonometric formulas we find:
\begin{eqnarray*}
\cosh(h_{i,1}) & = & \frac{\cosh(\ell_{i,2}/2)}{\sin(\pi/n)}.
\end{eqnarray*}
Now we consider the common perpendicular $\d$ between two non adjacent sides $a$ and $b$ of $P_i(m)$. The segment $\d$ together with the two segments emanating from $C$ orthogonal to $a$ and $b$ bound a pentagon in $P_i(m)$ with right angles except at $C$. Our aim is to compute the length $\ell(\d)$ of the side opposite to $C$. We distinguish two cases: either the sides $a$ and $b$ have both length $\ell_{i,1}$, either one has length $\ell_{i,1}$ and the other $\ell_{i,2}$.\par

 If they both have length $\ell_{i,1}$, the the pentagon has a symmetry whose axis is the segment orthogonal to $\d$ emanating from $C$. In this trirectangle we find:
\begin{eqnarray*}
\cosh(\ell(\d)/2) & = & \cosh h_{i,1} \sin(k\pi/n), \\
 & \geq &  \cosh h_{i,1} \sin(\pi/n)~=~\cosh(\ell_{i,2}/2).
\end{eqnarray*}
So $\ell(\d)\geq \ell_{i,2}$ with equality if and only if $\delta$ is a side (with even index) of $P_i(X)$.\par

 If one side is of length $\ell_{i,1}$ and the other of length $\ell_{i,2}$, then the formula iii) in the example~2.2.7 of \cite{buser} gives
\begin{eqnarray*}
\cosh(\ell(\d)/2) & = & -\cosh h_{i,1}\cosh h_{i,2}\cos(k\pi/n_i)+ \sinh h_{i,1}\sinh h_{i,2}.
\end{eqnarray*}
In the trirectangle $T$, we find the relation
\begin{eqnarray*}
\sinh(h_{i,j}) & = & \sinh(\ell_{i,j+1}/2)\cosh h_{i,j+1} ,
\end{eqnarray*}
and we finally obtain 
\begin{eqnarray*}
 \cosh(\ell(\d)/2) & = & \cosh h_{i,1}\cosh h_{i,2} \left( -\cos(k\pi/n_i)+ \sinh(\ell_{i,1}/2)\sinh(\ell_{i,2}/2) \right),\\
  & = &  \cosh h_{i,1}\cosh h_{i,2} \left( -\cos(k\pi/n_i)+ \cos(\pi/n_i) \right), \\
  & \geq & \cosh h_{i,1}\cosh h_{i,2} \left( -\cos(3\pi/n_i)+ \cos(\pi/n_i) \right), \\
  & \geq &  \cosh( \ell_{i,1}/2) \cosh( \ell_{i,2}/2) \frac{4 \cos(\pi/n_i)(1-\cos^2(\pi/n_i))}{\sin^2(\pi/n_i)},\\
  & \geq & \cosh(\ell_{i,1}/2)\cosh(\ell_{i,2}/2) 4 \cos(\pi/3), \\
  & \geq & 2\cosh(\ell_{i,1}/2) \cosh(\ell_{i,2}/2).
\end{eqnarray*}
So $\ell(\d)>\ell_{i,2}/2$. Let $c$ be a geodesic arc that do not retract on $\partial S$. Either this arc is isotopic to an element of $\a$, either there is a segment of $c$ that joined two non adjacent sides of one of the $P_i(m)$. The conclusion comes directly from the estimates above. 
\end{proof}

\begin{lemma}
 The point $[m]$ is eutactic, and its rank is equal to the number of components of $\a$.
\end{lemma}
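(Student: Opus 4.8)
The plan is to read both conclusions off the formula for $\mrm d\B$ established just above, transported through the embedding $P$. By the preceding lemma the arcs realising $\sys_{\partial S}(m)$ are exactly the components $\alpha_1,\dots,\alpha_n$ of $\alpha$, so $\Sys([m])=\{\alpha_1,\dots,\alpha_n\}$ and the whole question reduces to analysing the configuration of the covectors $\mrm d\ell_{\alpha_1}([m]),\dots,\mrm d\ell_{\alpha_n}([m])$ inside $T^\ast_{[m]}\teich(S,\partial S)$.

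First I would prove eutaxy. Since $P([m])$ lies in $\V(n_1,\dots,n_k)$, the computation of the previous subsection gives
$$\mrm d\B=-\sum_{1\le i\le k}\kappa_i\Big(\sum_{j\ \mathrm{even}}\mrm d\ell_{i,j}\Big),\qquad \kappa_i=\frac{1+\cosh\ell_{i,2}}{1+\cosh\ell_{i,1}}\cdot\frac{\sinh\ell_{i,1}}{\sinh\ell_{i,2}}>0,$$
along $\V(n_1,\dots,n_k)$. As $\B\circ P=\ell_\partial$ is constant on $\teich(S,\partial S)$, the covector $\mrm d\B$ restricts to zero on $T_{[m]}\teich(S,\partial S)$. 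Pulling the identity back by $P$ and collecting the even sides according to the arc they descend from---each $\alpha_e$ being the common image of two sides carried by polygons $P_{i(e)}$ and $P_{i'(e)}$ (possibly with $i(e)=i'(e)$)---yields the single relation
$$\sum_{e=1}^{n}\big(\kappa_{i(e)}+\kappa_{i'(e)}\big)\,\mrm d\ell_{\alpha_e}([m])=0\qquad\text{in }T^\ast_{[m]}\teich(S,\partial S),$$
in which every coefficient is strictly positive. After normalisation this writes the origin as a convex combination of the $\mrm d\ell_{\alpha_e}([m])$ with strictly positive weights, so it lies in the affine interior of their convex hull. Hence $[m]$ is eutactic, and by Akrout's theorem (Theorem~\ref{thm:akrout}) it is a critical point of $\sys_{\partial S}$.

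It remains to determine the rank. The relation above is one dependence with all-positive coefficients, and the point is to show it is essentially the only one. I would first establish that the arc-length functions $\ell_{\alpha_1},\dots,\ell_{\alpha_n}$ are functionally independent on the ambient space $\teich(S)$, so that their differentials are linearly independent in $T^\ast_{[m]}\teich(S)$: using the embedding $P$ together with the trivialisation $\{u_i\}$ of $T\poly(2n_i)$ from \textsection\ref{sec:tangent}, whose pairing with the side-length differentials is explicit, one exhibits for each $e$ a direction along which $\ell_{\alpha_e}$ moves while every other $\ell_{\alpha_{e'}}$ is frozen. Thus in $T^\ast_{[m]}\teich(S)$ the family $(\mrm d\ell_{\alpha_e})$ has rank $n$, the number of components of $\alpha$. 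The $\mrm d\B$ formula then displays $\mrm d\ell_\partial$ as a nonzero element of their span, and cutting down to the codimension-one slice $\teich(S,\partial S)=\{\ell_\partial=\mathrm{const}\}$ imposes exactly this one relation. This pins down the vectorial rank, hence---by Akrout's theorem---the Morse index of $\sys_{\partial S}$ at $[m]$; for a maximal arc system the index attains $\dim\teich(S,\partial S)$, consistent with the extreme points of Theorem~\ref{thm:max-arcs}.

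The genuine obstacle is this last rank computation, i.e. the functional independence of the arc lengths: the positivity that yields eutaxy is immediate once the $\mrm d\B$ formula is in hand, whereas excluding hidden dependences requires real input. I would secure independence either by the explicit Jacobian of $P$ expressed through the fields $u_i$, or by passing to the orientation double $\hat S$, completing $\alpha$ to a pants decomposition (as in the proof of the embedding proposition) and invoking that Fenchel--Nielsen, equivalently shearing, parameters form a global chart---so that the lengths attached to $\alpha$ are part of a coordinate system. With independence in place, the unique boundary relation fixes the rank and completes the identification of the critical points of $\sys_{\partial S}$ with the arc systems that cut $S$ into semi-regular right-angled polygons.
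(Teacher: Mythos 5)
Your derivation of eutaxy is exactly the paper's: $\mrm d\B$ vanishes on $T_{[m]}\teich(S,\partial S)$ because $\B\circ P=\ell_\partial$ is constant there, while the formula of the preceding subsection expresses $\mrm d\B$ as a linear combination with strictly negative coefficients of the even-side differentials, which are precisely the $\mrm d\ell_{\alpha_e}$; hence the origin lies in the affine interior of their convex hull. So for the eutaxy half you and the paper coincide.

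The paper's proof stops there and says nothing about the rank, so your second half is an addition rather than a divergence, and it is worth a comment. Your computation gives vectorial rank $n-1$ after restriction to the codimension-one slice $\{\ell_\partial=\mathrm{const}\}$ (one relation, the one produced by $\mrm d\B$), and this is the value consistent with Akrout's theorem and with Theorem~\ref{thm:max-arcs}: an extreme point must have rank $\dim\teich(S,\partial S)$, which for a maximal system of $n$ arcs is $n-1$, not $n$. The lemma's literal claim that the rank equals the number of components of $\a$ is therefore off by one (or uses a shifted convention for ``rank''), and your argument quietly corrects it. The one genuinely unproved ingredient you identify --- linear independence of the $\mrm d\ell_{\alpha_e}$ on the unconstrained $\teich(S)$ --- does require justification (your route through the orientation double and a completion to a pants decomposition, or equivalently the Jacobian of $P$ against the fields $u_i$, is adequate), but the paper supplies no argument for it either.
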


\begin{proof}
By definition, the function $\B$ is constant on $P(\teich(S,b))$, thus $(\mrm d \B)_{[m]}\equiv 0$ over the tangent space $T_{[m]}(\teich(S,b))$. On the other hand, we have expressed in the previous paragraph $(\mrm d \B)_{[m]}$ as a linear combination with strictly negative coefficients of the $\mrm d \ell_{i,j}$ with $j$ even. These functions are exactly the length functions of the sides corresponding to the components of $\a$. This shows that $[m]$ is eutactic.
\end{proof}

\newpage
%%%%%%%%%%%%%%%%%%%%%%%%%%%%%%%%%%%%%%%
\part{Systole of geodesic loops}   %  PART 3: Systole geodesic loops
%%%%%%%%%%%%%%%%%%%%%%%%%%%%%%%%%%%%%%%

%%%%%%%%%%%%%%%%%%%%%%%%%
\section{Teichmüller spaces of pointed surfaces}\label{sec:teich-pointed}
%%%%%%%%%%%%%%%%%%%%%%%%%

Let $S$ be a closed oriented surface with negative Euler characteristic. Given a finite set of points $p=\{p_1,\ldots, p_n\}\subset S$, the \emph{Teichm\"uller space} of the pointed surface $(S,p)$ is the space of hyperbolic metrics on $S$ up to isotopy fixing $p$, we denote it by $\teich(S,p)$. The \emph{modular group} $\Mod(S,p)$ is the group of orientation preserving diffeomorphisms of the pair $(S,p)$ up to isotopy fixing $p$. By \emph{isotopy fixing} $p$, we mean an isotopy fixing each element of $p$ at all times.\par

\subsection{Spaces of representations}%%%%%%%%%%%%%%%%%
We consider the universal cover $\tilde S$ of $S$ as the space of isotopy classes of paths with origin $p_1$. We denote by $\tilde p_1\in \tilde S$ the point corresponding to the class of the constant path. Note that $\pi_1(S,p_1)$ acts on the left of $\tilde S$ by precomposition. Given another pointed surface $(S',p_1')$, any continuous map $f:(S,p_1)\rightarrow (S',p_1')$ admits a unique lift $\tilde f:(\tilde S,\tilde p_1)\rightarrow (\tilde S',\tilde p_1')$, and defines a unique morphism $f_\ast:\pi_1(S,p_1)\rightarrow \pi_1(S',p_1')$. Clearly the lift $\tilde f$ is equivariant with respect to $f_\ast$.\par
 
 We denote by $\D_n$ the subset of $\Hyp^n$ that consists in $n$-tuples of points that are not all pairwise distinct. We fix a lift $\tilde p_i$ of each $p_i$ with $i\geq 2$, and a point $x\in \Hyp^n-\Delta_n$. We denote by $\rep(S)$ the space of discrete and faithful representations of $\pi_1(S,p_1)$ into $\Isom(\Hyp)$. Given a representation $\rho\in\rep(S)$, we denote by $x^\rho=\{x_1^\rho,\ldots,x_n^\rho\}$ the image of $x$ in $S^\rho=\Hyp/\rho$. The identity of $S^\rho$ admits a \emph{unique} lift $(\tilde S_\rho,\tilde x_1^\rho)\rightarrow (\Hyp,x_1)$, which is equivariant with respect to a \emph{unique} isomorphism $\pi_1(S_\rho,x_1^\rho)\rightarrow \rho(\pi_1(S))$. Let $f:(S,p)\rightarrow (S_\rho, x^\rho)$ be any diffeomorphism satisfying the following conditions:
\begin{itemize} 
\item $\rho:\pi_1(S)\rightarrow \rho(\pi_1(S))$ is the composition of $f_\ast:\pi_1(S,p_1)\rightarrow (S_\rho,x_1^\rho)$ with the isomorphism $\pi_1(S_\rho,x_1^\rho)\rightarrow \rho(\pi_1(S))$ above.,
\item $\tilde f(\tilde p_i)=x_i$ for each $i=1,\ldots, n$,
\end{itemize} 
we denote by $m_\rho$ the pullback by $f$ of the hyperbolic metric on $S_\rho$. The isotopy class of  $f$ is determined by $f_\ast$ and $\tilde f(\tilde p)$, so the isotopy class of $[m_\rho]_p$ depends only on $\rho$ and $x$. So we have a well-defined map $\f_x:\rep(S)\rightarrow \teich(S,p)$.\par
 If we allow the $x$ to vary, we obtain a map
$$\begin{array}{cccc}
\f: & \rep(S)\times (\Hyp^n-\Delta_n) & \longrightarrow & \teich(S,p) \\
 & (\rho,x) & \longmapsto & \f_x(\rho)
\end{array}$$ 
which invariant under the right action of $\Isom(\Hyp)$ on $\rep(S)\times (\Hyp^n-\Delta_n)$ defined by 
$$(\rho,x)\cdot \g~=~(\g^{-1}\rho\g,\g^{-1}(x))\quad (\forall \g\in\Isom(\Hyp)).$$
We denote by $\Phi:(\rep(S)\times (\Hyp^n-\Delta_n))/\Isom(\Hyp)\rightarrow\teich(S,p)$ the induced map.

\begin{proposition}
$\Phi$ is a homeomorphism. 
\end{proposition}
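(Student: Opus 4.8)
The plan is to exhibit $\Phi$ as a continuous bijection and then conclude by invariance of domain, using that the source and target are topological manifolds of the same dimension $-3\chi(S)+2n$. It is convenient to organize the argument around the forgetful maps. Sending $(\rho,x)$ to $\rho$ is equivariant for the conjugation action of $\Isom(\Hyp)$ (the first coordinate of $(\rho,x)\cdot\g=(\g^{-1}\rho\g,\g^{-1}(x))$ is $\g^{-1}\rho\g$), so it descends to a map onto $\rep(S)/\Isom(\Hyp)$; sending $[m]_p$ to its underlying unmarked class $[m]$ is the projection $\pi_p$. These fit into a commutative square
\[
\begin{array}{ccc}
(\rep(S)\times(\Hyp^n-\Delta_n))/\Isom(\Hyp) & \xrightarrow{\ \Phi\ } & \teich(S,p)\\
\downarrow & & \downarrow\ \pi_p\\
\rep(S)/\Isom(\Hyp) & \xrightarrow{\ \sim\ } & \teich(S),
\end{array}
\]
whose bottom arrow is the classical holonomy homeomorphism for the closed surface $S$ (the Fricke--Teichm\"uller theorem), which I take as known. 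It then suffices to understand $\Phi$ fiber by fiber.

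\emph{Bijectivity.} First I would analyze the fibers. Over a class $[\rho]$ I fix a representative $\rho$; since $\rho(\pi_1(S))$ is a torsion-free cocompact lattice it is Zariski dense in $\Isom(\Hyp)$, so its centralizer is trivial and $\Isom(\Hyp)$ acts freely on the slice $\{\rho\}\times(\Hyp^n-\Delta_n)$. Hence the source fiber is canonically $\Hyp^n-\Delta_n$. On the target, the fiber of $\pi_p$ over $[m]$ is identified with the same space $\Hyp^n-\Delta_n=\Conf_n(\tilde S)$ by lifting the marked points to the universal cover through the developing map of $m$ (this is the Bers fiber space description of $\teich(S,p)$). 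Under these identifications $\Phi$ is, by construction, the identity: $\f_x(\rho)$ places the lift of the $i$-th marked point at $x_i$. A diagram chase then yields bijectivity of $\Phi$, the bottom arrow and every fiber map being bijective. Equivalently, I would write the inverse directly: given $[m]_p$, choose a representative $m$, take its developing map $\mathsf{dev}\colon(\tilde S,\tilde p_1)\to\Hyp$ with holonomy $\rho$, and set $x_i=\mathsf{dev}(\tilde p_i)$; changing $m$ within its isotopy class fixing $p$, or changing $\mathsf{dev}$, alters $(\rho,x)$ only by the $\Isom(\Hyp)$-action, so $[(\rho,x)]$ is well defined and inverse to $\Phi$.

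\emph{Continuity and conclusion.} The forward map is continuous because the developing map of a hyperbolic structure depends continuously on its holonomy, so $m_\rho$ varies continuously with $\rho$, and the marked points vary continuously with $x$; both statements are standard and compatible with passage to the $\Isom(\Hyp)$-quotient. Since $\Isom(\Hyp)$ acts freely and properly on $\rep(S)\times(\Hyp^n-\Delta_n)$, the source is a manifold of dimension $-3\chi(S)+2n$ (recall $\dim\rep(S)=-3\chi(S)+3$ and $\dim\Isom(\Hyp)=3$), equal to $\dim\teich(S,p)$. Granting continuity and bijectivity, invariance of domain shows $\Phi$ is open, hence a homeomorphism.

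The hard part will be the injectivity step, i.e.\ the rigidity hidden in the fiber identification: one must check that if $\f_x(\rho)$ and $\f_{x'}(\rho')$ are isotopic through an isotopy fixing each $p_i$ at all times, then $(\rho,x)$ and $(\rho',x')$ lie in the same $\Isom(\Hyp)$-orbit. After normalizing $\rho=\rho'$ using the classical injectivity of the bottom arrow, this reduces to showing that such an isotopy lifts to a $\rho$-equivariant isotopy of $\tilde S$ fixing the chosen lifts $\tilde p_i$, which forces $\mathsf{dev}(\tilde p_i)=\mathsf{dev}'(\tilde p_i)$ and hence $x=x'$. Getting this bookkeeping of basepoints and lifts exactly right --- precisely the data fixed in the construction of $\f$ above --- is where the argument must be carried out with care.
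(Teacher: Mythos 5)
Your proposal is correct and follows essentially the same route as the paper: establish that $\Phi$ is a continuous bijection and conclude by invariance of domain, with injectivity coming from lifting an isotopy that fixes $p$ to an equivariant isotopy of $\tilde S$ fixing the chosen lifts $\tilde p_i$, which pins down $x$ up to the $\Isom(\Hyp)$-action. The extra packaging via the commutative square over $\teich(S)$ and the fiberwise identification with $\Hyp^n-\Delta_n$ is a harmless reorganization of the same argument, and the "hard part" you flag at the end is exactly the bookkeeping the paper's proof carries out.
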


\begin{proof}
This map is clearly continuous and surjective. Let us show that it is injective, this is enough to conclude by invariance of the domain.\par
 Let $(\rho,x),(\rho',x')$ be two elements of $(\rep(S)\times (\Hyp-\Delta_n))/\Isom(\Hyp)$ with same images in $\teich(S,p)$.
We consider two diffeomorphisms $f:(S,p)\rightarrow (S_{\rho},x^{\rho})$ and $f':(S,p)\rightarrow (S_{\rho'},x'^{\rho'})$ satisfying the conditions above.\par 
We have that $\tilde f\circ(\tilde f')^{-1}$ conjugates the representation $\rho'$ and $\rho$ and sends $p'$ on $p$. By definition of $\teich(S,p)$, there exists an isotopy $t\mapsto F_t$ fixing $p$ such that $f'\circ F_1\circ f^{-1}:(S^\rho,x^\rho)\rightarrow (S^{\rho'},x'^{\rho'})$ is an isometry. 
The lift of this isometry conjugate $\rho$ and $\rho'$. Moreover, as $F$ fixes $p$, we have that its lift $t\mapsto \tilde F_t$ fixes each lift $\tilde p_i$. So finally we find that $\tilde f'\circ \tilde F_1 \circ \tilde f^{-1}(x)=\tilde f'\circ \tilde F_0 \circ \tilde f^{-1}(x)=x'$.
\end{proof}

We endow $\teich(S,p)$ with the smooth structure coming from the $\Isom(\Hyp)$-principal bundle.
 Note that any smooth section $s:\teich(S)\rightarrow \rep(S)$ gives a homeomorphism between $s(\teich(S))\times (\Hyp^n-\Delta_n)$ and $\teich(S,p)$. We can interpret this homeomorphism as follows

\subsection{Decomposition of $\teich(S,p)$ as a product}%%%%%%%%%%%% 
 The space $\Conf(S)$ of conformal structures on $S$ is the total space of a $\Diff_0(S)$-bundle over the base $\teich(S)$, where $\Diff_0(S)$ is the group of diffeomorphisms of $S$ isotopic to the identity. As $\teich(S)$ is contractible, this bundle is trivial, and there exists a section $s:\teich(S)\rightarrow \Conf(S)$.\par

  We assume that $s$ is fixed. To any hyperbolic metric $m$ on $S$ corresponds a unique diffeomorphism $f_m\in\Diff_0(S)$ such that $m=(f_m)_\ast s([m])$. The existence is obvious as $m$ and $s([m])$ project on the same isotopy class $[m]\in\teich(S)$. The unicity comes from the fact that two homotopic isometries of a compact hyperbolic surface are equal, because their lifts coincide on the visual boundary of the universal cover.
Let $t\mapsto F_t$ be any isotopy from $id_S$ to $f_m$, we denote by $\push(m)$ the collection of homotopy classes of the paths $t\mapsto F_t(p_i)$ on $S$. We fix a lift $\tilde p_i\in \tilde S$ of each $p_i$, and we look at $\push(m)$ as a $n$-tuple of distinct points in $\tilde S$. The situation is summarized in the following diagram, where we denote by $[m]_p$ the class of $m$ in $\teich(S,p)$, and by $[m]$ its class in $\teich(S)$. 

\begin{tikzcd}
 & { s([m])\in \Conf(S) }   \arrow[]{r}{}[swap]{} & f_m\in\Diff_0(S)  \arrow[]{r}{}[swap]{}  &\push(m)\in(\tilde S^n-\Delta) \\
 m\in\Conf(S) \arrow[]{r}{}[swap]{}  &    {[m]_p}\in  \teich(S,p)   \arrow[]{u}{}[swap]{}  \arrow[]{d}{}[swap]{}   \arrow[]{rru}{}[swap]{}  &  \\
 &  {[m]\in \teich(S)} 
\end{tikzcd}

The following lemma shows that the application $\push:\Conf(S)\rightarrow (\tilde S^n-\Delta)$ is well-defined.

\begin{lemma}
The homotopy class $\push(m)$ does not depend on the choice of the isotopy $F$.
\end{lemma}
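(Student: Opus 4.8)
The plan is to pass to the universal cover $\tilde S$ and show that any two choices of isotopy produce the \emph{same} point of $\tilde S^n-\Delta$, from which the independence of the rel-endpoint homotopy classes follows at once. First I would observe that, with the lift $\tilde p_i$ fixed, the path $t\mapsto F_t(p_i)$ (which runs from $p_i$ to $f_m(p_i)$, since $F_0=\mathrm{id}_S$ and $F_1=f_m$) admits a unique lift to $\tilde S$ starting at $\tilde p_i$, namely $t\mapsto \tilde F_t(\tilde p_i)$, where $\tilde F_t$ is the lift of the isotopy normalized by $\tilde F_0=\mathrm{id}_{\tilde S}$. Hence the $n$-tuple recorded by $\push(m)$ through $F$ is exactly $(\tilde F_1(\tilde p_1),\ldots,\tilde F_1(\tilde p_n))$. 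Two isotopies $F,G$ from $\mathrm{id}_S$ to $f_m$ give paths that are homotopic rel endpoints precisely when these tuples coincide, so the lemma reduces to proving $\tilde F_1(\tilde p_i)=\tilde G_1(\tilde p_i)$ for every $i$; in fact I will show the stronger equality $\tilde F_1=\tilde G_1$.

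Next I would exploit that $F_t$ is an isotopy \emph{from the identity}. For each deck transformation $\delta\in\pi_1(S)$ the assignment $t\mapsto \tilde F_t\,\delta\,\tilde F_t^{-1}$ takes values in the deck group (one checks directly that it covers the identity), is continuous in $t$, and equals $\delta$ at $t=0$; since the deck group is discrete it is constant, so $\tilde F_1$ commutes with every deck transformation, and likewise $\tilde G_1$. As $\tilde F_1$ and $\tilde G_1$ both cover $f_m$, their composite $\gamma=\tilde F_1\,\tilde G_1^{-1}$ covers the identity and is therefore itself a deck transformation. Using that $\tilde F_1$ and $\tilde G_1$ commute with all of $\pi_1(S)$, one computes $\gamma\delta=\delta\gamma$ for every $\delta$, so $\gamma$ lies in the center of $\pi_1(S)$.

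The crux is then the following: since $S$ is a closed surface of negative Euler characteristic, $\pi_1(S)$ is a surface group of genus $\geq 2$ and so has trivial center. Consequently $\gamma=\mathrm{id}$, giving $\tilde F_1=\tilde G_1$ and in particular $\tilde F_1(\tilde p_i)=\tilde G_1(\tilde p_i)$ for each $i$; projecting back down, the paths $t\mapsto F_t(p_i)$ and $t\mapsto G_t(p_i)$ are homotopic rel endpoints, so $\push(m)$ does not depend on the chosen isotopy. The only genuinely delicate point is this appeal to the triviality of the center of $\pi_1(S)$; everything else is formal lifting theory. (An alternative route is to note that $F$ and $G$ differ by a loop in $\Diff_0(S)$ based at $\mathrm{id}_S$ and to invoke the contractibility of $\Diff_0(S)$ for genus $\geq 2$ due to Earle--Eells, but the universal-cover argument above is more elementary and self-contained.)
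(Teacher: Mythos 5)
Your proof is correct, but it follows a genuinely different route from the paper's. The paper's argument is the one you relegate to a parenthesis at the end: the concatenation of $F$ and the reversed $G$ is a loop in $\Diff_0(S)$ based at $\mathrm{id}_S$, which is null-homotopic because $\Diff_0(S)$ is contractible (Earle--Eells); applying the evaluation map at $p_i$ then kills the loop $t\mapsto F_t(p_i)$ followed by $t\mapsto G_{1-t}(p_i)$ in $\pi_1(S,p_i)$. Your main argument instead lifts everything to the universal cover and shows that the terminal lift $\tilde F_1$ is the \emph{unique} lift of $f_m$ commuting with the deck group, using that such a lift differs from any other candidate by a central deck transformation and that $\pi_1(S)$ is centerless for a closed surface of negative Euler characteristic. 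Each step of your lifting argument is sound (the continuity-plus-discreteness argument for $t\mapsto \tilde F_t\delta\tilde F_t^{-1}$ is standard, and homotopy rel endpoints of paths is indeed detected by coincidence of endpoints of lifts to the universal cover), and you in fact prove the stronger statement $\tilde F_1=\tilde G_1$ as maps, not merely at the marked points. The trade-off: the paper's proof is a two-line deduction from a deep theorem that it uses elsewhere anyway (contractibility of $\Diff_0(S)$ underlies the existence of the section $s$ and the whole product decomposition of $\teich(S,p)$), whereas yours is elementary and self-contained, needing only covering-space theory plus the algebraic fact that higher-genus surface groups have trivial center --- and it would transfer to any aspherical manifold with centerless fundamental group without invoking anything about the topology of its diffeomorphism group.
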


\begin{proof}
Let $t\mapsto G_t$ be another isotopy from $id_S$ to $f_m$, that gives a of path $(t\mapsto G_t(p_i)$ for each $1\leq i\leq n$.
The concatenation of $t\mapsto F_t$ and $t\mapsto G_{1-t}$ gives a loop in $\Diff_0(S)$, which is homotopically trivial for $\Diff_0(S)$ is  contractible. This implies that the concatenation of $t\mapsto F_t(p_i)$ and $t\mapsto G_{1-t}(p_i)$ is a loop which is trivial in $\pi_1(S,p_i)$. It follows that the paths $t\mapsto F_t(p_i)$ and $t\mapsto G_t(p_i)$ are homotopic.
\end{proof} 

The following lemma shows that $\push:\Conf(S)\rightarrow (\tilde S^n-\Delta_n)$ factorizes through $\teich(S,p)$.
 
\begin{lemma}
If two hyperbolic metrics  $m$ and $m'$ on $S$ satisfy $[m]_p=[m']_p$, then $\push(m)=\push(m')$.
\end{lemma}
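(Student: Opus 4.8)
The plan is to use the freedom granted by the preceding lemma: since $\push(m')$ is independent of the isotopy chosen to compute it, it is enough to exhibit \emph{one} convenient isotopy from $\mathrm{id}_S$ to $f_{m'}$ whose associated paths $t\mapsto G_t(p_i)$ agree with the paths $t\mapsto F_t(p_i)$ defining $\push(m)$. First I would unwind the hypothesis: because $[m]_p=[m']_p$, there is an isotopy $t\mapsto H_t$ of $S$ with $H_0=\mathrm{id}_S$, fixing each $p_i$ at all times, whose time-one map $h:=H_1$ relates $m$ and $m'$. Forgetting the marked points, $h$ is isotopic to the identity, so $[m]=[m']$ in $\teich(S)$ and consequently $s([m])=s([m'])$.

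With this common conformal structure fixed, the defining equalities $m=(f_m)_\ast s([m])$ and $m'=(f_{m'})_\ast s([m])$, combined with the way $h$ relates $m$ and $m'$, force (by the uniqueness of $f_m,f_{m'}\in\Diff_0(S)$) a relation of the form $f_{m'}=f_m\circ h$. Granting this, I would choose any isotopy $t\mapsto F_t$ from $\mathrm{id}_S$ to $f_m$ and set $G_t:=F_t\circ H_t$. Then $G_0=\mathrm{id}_S$ and $G_1=f_m\circ h=f_{m'}$, so $G$ is an isotopy from $\mathrm{id}_S$ to $f_{m'}$; and since $H_t(p_i)=p_i$ for all $t$ and $i$, we get $G_t(p_i)=F_t\big(H_t(p_i)\big)=F_t(p_i)$. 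Thus the path traced by each $p_i$ under $G$ is literally the path traced under $F$; computing $\push(m')$ with the isotopy $G$, which is legitimate by the preceding lemma, therefore returns exactly the homotopy classes defining $\push(m)$, whence $\push(m')=\push(m)$.

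The main obstacle is the algebraic relation $f_{m'}=f_m\circ h$, and above all the fact that $h$ multiplies $f_m$ on the \emph{right}. This ordering is precisely what lets the fixed-point property $H_t(p_i)=p_i$ do its work: were $h$ on the left, the marked point would trace $t\mapsto H_t(F_t(p_i))$, ending at $h(f_m(p_i))\neq f_m(p_i)$, and, since this endpoint is then forced, no choice of isotopy could restore the equality. The right-hand placement reflects the convention under which $\Diff_0(S)$ trivializes the bundle $\Conf(S)\to\teich(S)$, so that replacing $m$ by an $m'$ isotopic to it relative to $p$ alters $f_m$ only by postcomposition with a diffeomorphism fixing the $p_i$; equivalently, the image $f_m(p_i)$ of the marked point under the isometry $(S,m)\to(S,s([m]))$ is a genuine invariant of $[m]_p$, and it is exactly the endpoint of the path recorded by $\push$. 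Establishing $f_{m'}=f_m\circ h$ rigorously amounts to tracking the section $s$ through the two defining equations while being careful with the ordering conventions, and this is the only step requiring real care.
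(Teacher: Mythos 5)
Your argument is correct and is essentially the paper's own proof: both reduce, via the preceding lemma, to exhibiting a single isotopy from $\mathrm{id}_S$ to $f_{m'}$ along which each $p_i$ traces the same path as under $F$ --- the paper by concatenating $F$ with a $p$-stationary isotopy from $f_m$ to $f_{m'}$, you by forming $G_t=F_t\circ H_t$. The composition-order point you isolate (that one needs $f_{m'}=f_m\circ h$ with $h$ on the right, which hinges on reading the marking convention as a right action) is indeed the only delicate step, and the paper passes over it just as quickly as you do.
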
 
 
 \begin{proof}
 Let $F$ be an isotopy from $id_S$ to $f_m$.
 As $[m]_p=[m']_p$, there is an isotopy $G$ from $f_m$ to $f_{m'}$ that fixes $p$.
The concatenation of $G$ and $F$ gives an isotopy $F'$ between $id_S$ and $f_{m'}$. As $G$ fixes $p_i$, the paths defined by $F'$ and $F$ are the same up to the parametrization, so they define the same homotopy class.
 \end{proof}
 
 We now consider the map 
 $$\begin{array}{lrlc}
 \Psi: & \teich(S,p) & \longrightarrow &  \teich(S)\times (\tilde S^n-\Delta_n) \\
  & [m]_p & \longmapsto & ([m],\push(m))
\end{array}.$$
 
\begin{proposition}
The map $\Psi$ is a diffeomorphism between $\teich(S,p)$ and $\teich(S)\times (\tilde S^n-\Delta_n)$. In particular, $\push$ is injective on each isotopy class $[m]$.
\end{proposition}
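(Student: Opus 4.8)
The plan is to factor $\Psi=(\pi_p,\push)$ and to show it is a smooth bijection with smooth inverse; since both source and target are smooth manifolds this yields a diffeomorphism. I would organise everything around the projection $\pi_p:\teich(S,p)\rightarrow\teich(S)$, which is the bundle projection attached to the $\Isom(\Hyp)$-principal bundle of the previous subsection and is therefore a smooth fibration. It then suffices to exhibit $\Psi$ as a smooth bundle map over $\teich(S)$ that restricts on each fibre to a diffeomorphism $\pi_p^{-1}([m])\xrightarrow{\sim}\{[m]\}\times(\tilde S^n-\Delta_n)$; a bundle map over the same base that is a fibrewise diffeomorphism and smooth in both directions is a diffeomorphism.

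First I would fix a smooth section $s:\teich(S)\rightarrow\rep(S)$ of the quotient $\rep(S)\rightarrow\teich(S)$, which exists because $\teich(S)$ is contractible. By the proposition on $\Phi$ and the remark following it, $s$ trivialises $\teich(S,p)$: the $\Isom(\Hyp)$-action on $\rep(S)\times(\Hyp^n-\Delta_n)$ is free along the slice $s(\teich(S))\times(\Hyp^n-\Delta_n)$, since the centraliser in $\Isom(\Hyp)$ of a discrete faithful non-elementary representation is trivial. This gives $\teich(S,p)\cong\teich(S)\times(\Hyp^n-\Delta_n)$, the second coordinate recording the position of the marked points in the developed image of $s([m])$. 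For each $[m]$ the developing map of the metric $s([m])$ is a diffeomorphism $\tilde S\xrightarrow{\sim}\Hyp$ depending smoothly (indeed real-analytically) on $[m]$, and these assemble into a smooth bundle isomorphism $\teich(S)\times(\Hyp^n-\Delta_n)\xrightarrow{\sim}\teich(S)\times(\tilde S^n-\Delta_n)$ that is fibrewise a diffeomorphism.

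The key step is to check that, under these two identifications, the resulting composite is $\Psi$; equivalently, that for $m=(f_m)_\ast s([m])$ the configuration $\push(m)\in\tilde S^n-\Delta_n$ is the preimage under the developing map of the marked-point configuration attached to $[m]_p$ by $\Phi$. This is a definition chase: one lifts an isotopy $F$ from $\mathrm{id}_S$ to $f_m$ to the universal cover, uses the equivariance of the developing map, and compares the endpoint of the lifted path $t\mapsto F_t(p_i)$ issued from $\tilde p_i$ with the point $x_i=\tilde f(\tilde p_i)$ entering the construction of $\Phi$. Surjectivity and fibrewise injectivity of $\push$ can be read off here as well: any configuration in $\tilde S^n-\Delta_n$ is realised by point-pushing some $f_m\in\Diff_0(S)$ along suitable paths, because the evaluation map $\Diff_0(S)\rightarrow S^n-\Delta$ is a fibration and $\Diff_0(S)$ is contractible by Earle--Eells; and two metrics in the same fibre with equal $\push$ differ by a diffeomorphism isotopic to the identity rel $p$, hence define the same point of $\teich(S,p)$. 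This last implication rests, exactly as in the proof of the proposition on $\Phi$, on the fact that two homotopic isometries of a closed hyperbolic surface coincide.

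I expect the main obstacle to be precisely this compatibility between the two parametrisations of the fibre: the dynamic, point-pushing description of $\push$, built from an arbitrary isotopy to $f_m$, versus the static description of the marked points through the developing map and $\Phi$. Making the identification $\tilde S\cong\Hyp$ canonical and verifying that it carries $\push(m)$ onto the configuration recorded by $\Phi$ requires careful bookkeeping of base points, of the chosen lifts $\tilde p_i$, and of the equivariance of the developing map. Once this is settled, smoothness in both directions follows from smoothness of the section $s$ and of the family of developing maps, the diffeomorphism statement is immediate, and the final clause that $\push$ is injective on each isotopy class $[m]$ is just the fibrewise injectivity obtained along the way.
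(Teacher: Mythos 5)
Your proof is correct, but it is organised quite differently from the paper's. The paper's argument is terse and direct: it asserts that $\Psi$ is a proper submersion (hence open with closed image, hence surjective onto the connected target once injectivity is known), and then proves injectivity by taking an isotopy $F$ from $\mathrm{id}_S$ to an isometry between $m$ and $m'$, observing that $\push(m)=\push(m')$ forces each path $t\mapsto F_t(p_i)$ to be homotopically trivial, and modifying $F$ near each $p_i$ so that it fixes $p$. You instead build an explicit inverse: you trivialise $\teich(S,p)$ over $\teich(S)$ by a section $s$ of $\rep(S)\rightarrow\teich(S)$, identify the fibre $\Hyp^n-\Delta_n$ with $\tilde S^n-\Delta_n$ through the developing maps of $s([m])$, and then verify that this trivialisation agrees with $\Psi$; surjectivity comes from path-lifting in the evaluation fibration $\Diff_0(S)\rightarrow S^n-\Delta$ together with Earle--Eells, and fibrewise injectivity reduces to the same isotopy-modification argument as the paper's. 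Your route buys an explicit description of $\Psi^{-1}$ and makes the smooth structure transparent, at the cost of the compatibility check between the dynamic ($\push$) and static ($\Phi$) descriptions of the marked points, which you correctly flag as the delicate bookkeeping step but only sketch; the paper avoids this entirely by never routing through $\Phi$. Note also that the key uniqueness input is the same in both proofs, namely that two homotopic isometries of a closed hyperbolic surface coincide, which the paper has already used to make $f_m$, and hence $\push$, well defined.
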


\begin{proof}
One easily proves that $\Psi$ is a proper submersion. So it remains to show that $\Psi$ is injective.
Let $[m]_p$ and $[m']_p$ two points in $\teich(S,p)$ with same image $\Psi(m)=\Psi(m')$. From $[m]=[m']$, there exists an isotopy $f\mapsto F$ from $id_S$ to a diffeomorphism $F_1$ that realizes an isometry between $m$ and $m'$. As $\push(m)=\push(m')$, the homotopy class of each path $t\mapsto F_t(p_i)$ is trivial. So we modify $F$ in a contractible neighborhood of each $p_i$ so that it fixes $p_i$. We conclude that $[m]_p=[m']_p$.
\end{proof}

\subsection{Shearing coordinates}%%%%%%%%%%%%%%%%%%%%%%%%%%%%%%%%%%%%
The identification of $\teich(S,p)$ with $\teich(S)\times(\tilde S^n-\D_n)$ is constructed from a section $s:\teich(S)\rightarrow \Conf(S)$. We would like to make some explicit computations on $d_{\tilde m}(a,b)$ as $m$ varies in $s(\teich(S))$ and $a,b$ varies in $\tilde S$. For instance, given $\g\in\pi_1(S,p_1)$ we would like to compute the derivatives of $d_{\tilde m}(a,\g\cdot a)$, which is the length of the geodesic loop in the homotopy class $\g$ based at the projection of $a$. The aim of this paragraph is to explain how we can perform such computations using the shearing coordinates.\par

 Let us fix a hyperbolic metric $m$ and an isometry $\Theta:(\tilde S,\tilde m)\rightarrow \Hyp$. This isometry defines a representation $\rho:\pi_1(S)\rightarrow \Isom(\Hyp)$. We still denote by $\tilde\l$ the image $\Theta(\tilde \l)$.
Given a maximal geodesic lamination $\l$ on $S$, we have seen that there is an embedding $\Sigma:\teich(S)\rightarrow \Hold$, where $\Hold$ is the linear space of transverse H\"older distributions for $\l$. Let us recall few facts about it following the exposition of Bonahon and S\"ozen (\cite[\textsection 3]{bonahon-ergodic}).\par

Let $U$ be a sufficiently small neighborhood of the origin in $\Hold$. Fro any $\a\in U$ Bonahon has constructed a map $E^\a:\Hyp-\tilde \l\rightarrow \Hyp$ which is an isometry on each component of $\Hyp-\tilde\l$. This map conjugates the representation $\rho$ to another discrete and faithful representation $\rho_\a:\pi_1(S)\rightarrow \Isom(\Hyp)$. The quotient $\Hyp/\rho_\a$ is a hyperbolic surface diffeomorphic to $S$. Let $m_\a$ be the pull-back of the hyperbolic metric on $\Hyp/\rho_\a$ by any diffeomorphism $f:S\rightarrow \Hyp/\rho_\a$ whose induced homomorphism between fundamental groups is $f_\ast=\rho_\a$. The point $[m_\a]\in\teich(S)$ depends only on $\a$ and satisfies $\Sigma(m_\a)=\Sigma(m)+\a$.\par
 
 For any $\a\in U$ we set $\Theta^\a=E^\a\circ \Theta:(\tilde S-\tilde \l,\tilde m) \rightarrow \Hyp$, and we denote by $F^\a:(\tilde S-\tilde \l,\tilde m^\a)\rightarrow (\tilde S-\tilde \l,\tilde m)$ the unique isometry that extends to the identity on the visual boundary $\tilde S_\infty$.
 The map 
$$\begin{array}{cccc}
\G: \bigcup_{\a\in U} \{\a\}\times(\tilde S-\tilde \l,\tilde m_\a)\subset U\times \tilde S  & \longrightarrow & U\times \Hyp \\
(\a, x) & \longmapsto & (\a , \Theta^\a\circ F^\a(x))
 \end{array}$$
 is smooth and its restriction to each slice $ \{\a\}\times(\tilde S-\tilde \l,\tilde m_\a)$ is an isometry. Moreover this isometry is equivariant with respect to the actions of $\pi_1(S,p_1)$ on $\tilde S$ and on $\Hyp$ though $\rho_\a$.\par

%%%%%%%%%%%%%%%%%%%%%%%%%%%%%%%%%%%%%%%%%
\section{The hessian of the length function of a geodesic loop}
%%%%%%%%%%%%%%%%%%%%%%%%%%%%%%%%%%%%%%%%%

Let $S$ be a closed oriented surface with negative Euler characteristic, and $\l$ be a maximal geodesic lamination on $S$. We consider the following function:
$$\begin{array}{lcll}
\Dcal : & \teich(S)\times (\tilde S-\tilde \l)^2 & \longrightarrow & \R_+^\ast \\
           & ([m], p,q) & \longmapsto & d_{\tilde m}(p,q).
\end{array}$$
This function is smooth at any point $([m], p,q)$ with $p\neq q$. The aim of this section is to compute the hessian of $\Dcal$ with respect to the shearing coordinates on $\teich(S)$ and the hyperbolic metric on $\tilde S-\tilde \l$. Note that the function $\Dcal$

 We identify the tangent space $T_{[m]}\teich(S)$ with $\Hold$.\par
 Let us fix some notations. Given a leaf $l$ of $\tilde\l$ that intersects $[p,q]$, we orient $l$ so that it points to the left when one goes from $p$ to $q$, we denote by $\theta_l\in (0,\pi)$ the angle at $l\cap [p,q]$ between the vector that points in the direction of $q$ and $l$, we set $\ell_{pl}=d_{\tilde m}(p,l\cap [p,q])$ and $\ell_{lq}=d_{\tilde m}(l\cap [p,q],q)$. Given two leaves $l$ and $h$ of $\tilde \l$, we set $\ell_{p\{l,h\}}=\min(\ell_{pl},\ell_{ph})$ and $\ell_{\{l,h\}q}=\min(\ell_{lq},\ell_{hq})$.

\begin{theorem}\label{thm:hessian}
For any $\a\in \Hold$, any $u\in T_p\tilde S$, and any $v\in T_q\tilde S$, we have
\begin{eqnarray*}
\partial_{[m]}\Dcal_{([m],p,q)} (\a,) & = & \int_{[p,q]} \cos\theta_l\ \mrm d \a(l), \\
\partial_{(p,q)}\Dcal_{([m],p,q)} (u,v) & = & \|u\| \cos\psi_u + \|v\| \cos\psi_v,\\
\partial^2_{[m]} \Dcal_{([m],p,q)} (\a^2) & =& \frac{1}{\sinh \ell_{pq}}\ \int \int_{[p,q]^2} \cosh \ell_{p\{l,h\}} \cosh\ell_{\{l,h\}q} \sin\theta_l \sin\theta_h \mrm d \a(l) \mrm d\a(h) \\
\partial_{[m]}\partial_{(p,q)} \Dcal_{([m],p,q)} (\a,u,v) & = &  \frac{\|u\| \sin \psi_u}{\sinh\ell_{pq}} \int_{[p,q]} \cosh\ell_{lq} \sin\theta_l \mrm d \a(l) + \frac{\|v\| \sin \psi_v}{\sinh\ell_{pq}} \int_{[p,q]} \cosh\ell_{pl} \sin\theta_l \mrm d \a(l) \\
\partial^2_{(p,q)} \Dcal_{([m],p,q)} (u,v)^2 & = & \coth\ell_{pq}\ \left( \|u\|^2\ \sin^2\psi_u +\|v\|^2 \ \sin^2\psi_v \right)-2\frac{\|u\| \|v\|\sin\psi_u \sin\psi_v}{\sinh\ell_{pq}}, \\
 & = & \frac{1}{\sinh\ell_{pq}} \left(\|u\|\sin\psi_u-\|v\|\sin\psi_v \right)^2 + \tanh(\ell_{pq}/2) \left( \|u\|^2\sin^2\psi_u+\|v\|^2\sin^2\psi_v \right)
\end{eqnarray*}
where $\psi_u$ (resp. $\psi_v$) is the oriented angle at $p$ (resp. at $q$) between the vector pointing in the direction of $q$ (resp. $p$) and $u$ (resp. $v$).
\end{theorem}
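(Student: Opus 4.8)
The plan is to reduce every entry of the Hessian to an explicit computation in the hyperboloid model $\Hyp\subset\R^{2,1}$, where $\cosh d(x,y)=-\langle x,y\rangle$ for the Minkowski form $\langle\cdot,\cdot\rangle$ of signature $(2,1)$. Using the developing construction recalled above (\cite{bonahon-ergodic}), I would fix an isometry $(\tilde S,\tilde m)\to\Hyp$ and develop the geodesic segment $[p,q]$: it crosses the leaves $l$ of $\tilde\l$ at the points $l\cap[p,q]$, and the sheared metric $m_\a$ sends the endpoints to $\tilde p$ and $g_\a\,\tilde q$, where $g_\a\in\Isom^+(\Hyp)$ is the ordered product along $[p,q]$ of the hyperbolic translations $\exp(\sigma_l(\a)\,X_l)$, with $X_l$ the unit translation Killing field along the oriented leaf $l$ and $\sigma_l(\a)$ linear in $\a$ and governed by the transverse H\"older distribution. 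For a genuine lamination this ordered product is Bonahon's product integral; I would first run the entire computation for finitely many leaves and pass to the H\"older-distribution limit only at the end, exactly as in \cite{gendulphe-shearing}. The endpoint variations $u\in T_p\tilde S$, $v\in T_q\tilde S$ move $\tilde p,\tilde q$ on the hyperboloid, so that every derivative of $\Dcal$ follows by differentiating
$$\cosh\Dcal=-\langle\tilde p,\,g_\a\,\tilde q\rangle$$
and using the chain rule for $\cosh$, i.e. dividing by $\sinh\Dcal$ after subtracting the correction produced by the nonlinearity of $\cosh$.

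First I would treat the first derivatives. Expanding $g_\a=\mathrm{Id}+\sum_l\sigma_l X_l+\cdots$, the shear derivative is $\partial_{[m]}\cosh\Dcal(\a)=-\sum_l\sigma_l\langle\tilde p,X_l\tilde q\rangle$; since $X_l$ is a Killing field, $\langle X_l,\dot\gamma\rangle$ is constant along $\gamma=[p,q]$ and equals $\cos\theta_l$ at the crossing point, giving $-\langle\tilde p,X_l\tilde q\rangle=\sinh\ell_{pq}\cos\theta_l$ and hence $\partial_{[m]}\Dcal(\a)=\int_{[p,q]}\cos\theta_l\,\mrm d\a(l)$. The endpoint derivative is the classical first variation of arc length, which with the orientation conventions of the statement gives $\partial_{(p,q)}\Dcal(u,v)=\|u\|\cos\psi_u+\|v\|\cos\psi_v$.

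The geometric input controlling all second-order terms is the behaviour of $X_l$ along $\gamma$: being the restriction of a Killing field to a geodesic, $X_l|_\gamma$ is a Jacobi field, and since $\nabla X_l$ vanishes at the crossing point $l\cap\gamma$, its normal component has magnitude $\sin\theta_l\cosh(\text{distance to }l)$ along $\gamma$; in particular it equals $\sin\theta_l\cosh\ell_{pl}$ at $p$ and $\sin\theta_l\cosh\ell_{lq}$ at $q$, which is the source of the weights $\cosh\ell_{pl},\cosh\ell_{lq}$. With this dictionary the mixed block comes from $\partial_{[m]}\partial_{(p,q)}\cosh\Dcal=-\sum_l\sigma_l\bigl(\langle u,X_l\tilde q\rangle+\langle\tilde p,X_l v\rangle\bigr)$: the first inner product selects the normal part of $u$ times the weight of $X_l$ read at $q$, the second selects the normal part of $v$ times the weight read at $p$, which explains the cross-pairing of $u$ with $\cosh\ell_{lq}$ and of $v$ with $\cosh\ell_{pl}$; dividing by $\sinh\ell_{pq}$ and removing the $\cosh\Dcal\,(\partial_{[m]}\Dcal)(\partial_{(p,q)}\Dcal)$ correction yields the stated formula. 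The pure endpoint block $\partial^2_{(p,q)}\Dcal$ is then just the Hessian of hyperbolic distance, i.e. the index form of $\gamma$ in curvature $-1$: the normal Jacobi field matching $u^\perp,v^\perp$ produces $\coth\ell_{pq}(\|u\|^2\sin^2\psi_u+\|v\|^2\sin^2\psi_v)-2\|u\|\|v\|\sin\psi_u\sin\psi_v/\sinh\ell_{pq}$, and the second displayed form follows from the identity $\coth L=1/\sinh L+\tanh(L/2)$.

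The main obstacle is the pure shear block $\partial^2_{[m]}\Dcal(\a^2)$, for two reasons. First, the translations do not commute, so the quadratic part of the ordered product is $\tfrac12\sum_l\sigma_l^2X_l^2+\sum_{l\prec h}\sigma_l\sigma_hX_lX_h$ with $\prec$ the order of the crossings; one must show that $\langle\tilde p,X_lX_h\tilde q\rangle$ splits into a $\cos\theta_l\cos\theta_h$ part and a $\sin\theta_l\sin\theta_h$ part, the former cancelling exactly against the correction $\cosh\ell_{pq}\bigl(\int\cos\theta\,\mrm d\a\bigr)^2$, the latter producing the kernel $\cosh\ell_{p\{l,h\}}\cosh\ell_{\{l,h\}q}\sin\theta_l\sin\theta_h$. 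The ordering is precisely what turns the two distances into the minima $\ell_{p\{l,h\}},\ell_{\{l,h\}q}$: the generator nearer $q$ acts first on $\tilde q$, so its weight is read at $q$ while its partner's is read at $p$, which is the same Green's-function structure as the index form. Second, I must justify passing from the finite product to the product integral against the H\"older distribution, so that the double sum becomes $\int\!\!\int_{[p,q]^2}$; I expect this to rely on the continuity and convergence estimates of Bonahon's shearing theory already used in \cite{gendulphe-shearing}. The only remaining delicate point is bookkeeping of the orientation conventions (leaves oriented to the left, the oriented angles $\psi_u,\psi_v$, and the signs of the boost generators) so that all signs match the statement.
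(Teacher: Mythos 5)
Your proposal is correct in structure, and the identities it rests on do check out (for instance, for a single perpendicular leaf one gets $-\langle\tilde p,X_l^2\tilde q\rangle=\cosh\ell_{pl}\cosh\ell_{lq}\sin^2\theta_l+\cosh\ell_{pq}\cos^2\theta_l$, so the $\cos^2$ part cancels exactly against the $\cosh\ell_{pq}(\partial_{[m]}\Dcal)^2$ correction, and the ordered product $X_lX_h\tilde q$ does read the weight of the leaf nearer $q$ at $q$ and of the leaf nearer $p$ at $p$, producing the minima $\ell_{p\{l,h\}},\ell_{\{l,h\}q}$). But it takes a genuinely different route from the paper. The paper never linearizes in the Minkowski model: after obtaining the first variation $\partial_{[m]}\Dcal(\a)=\int\cos\theta_l\,\mrm d\a(l)$ via the constancy of $\langle V(l),\g'\rangle$ for the Killing field $V(l)$, it differentiates the \emph{integrand} directly --- it computes how the angle $\theta_l$ and the angles $\psi_u,\psi_v$ vary under a shear along a second leaf $h$ or under motion of the endpoints, using two elementary geometric lemmas (the displacement of the intersection point of a curve with a rotating geodesic, and the rotation rate $\rho'(0)=\cosh\ell_{hq}\sin\theta_h/\sinh\ell_{pq}$ of $[p,q]$ under a shear, proved by trigonometry in a trirectangle). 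This keeps every step intrinsic, avoids the noncommutative ordered-product expansion and the $\cosh$-correction bookkeeping entirely, and produces intermediate formulas (notably Lemma~\ref{lem:angle}) that are reused later in the paper for the differentials of lengths and angles of polygons. Your route, by contrast, derives all five formulas from the single identity $\cosh\Dcal=-\langle\tilde p,g_\a\tilde q\rangle$, which is more mechanical once set up and makes the Green's-function/index-form structure of the kernel (hence positive semidefiniteness) transparent; the price is exactly the two points you flag --- the splitting and cancellation in the quadratic term of the ordered product, and the sign and orientation bookkeeping --- which are where the real work of your version would lie. The passage from finitely many Dirac masses to a general transverse H\"older distribution is handled identically in both approaches, by the approximation scheme of \cite{gendulphe-shearing}.
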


The last formula above gives an explicit expression of the hessian of the distance function $d_\Hyp:\Hyp\times \Hyp\rightarrow \R$. As we were not able to find such a formula in the literature (see for instance \cite[Theorem~2.5.8]{Thurston}), we think that it deserves a specific statement:

\begin{corollary}
Let $p$ and $q$ be two distinct points in $\Hyp$, and let us denote by $U$ (resp. $V$) the tangent vector at $p$ (resp. $q$) obtained by rotating by an angle $\pi/2$ the vector pointing in the direction of $q$ (resp. of $p$). Then the Hessian of the distance function $d_\Hyp(\cdot,\cdot)$ at $(p,q)$ is given by $Q(\langle u, U\rangle, \langle v,V \rangle)$ for any $u\in T_p\Hyp$ and $v\in T_q\Hyp$, where $Q:\R^2\rightarrow\R$ is the positive definite quadratic form whose matrix in the canonical basis is
$$\frac{1}{\sinh d_\Hyp(p,q)} \begin{pmatrix} \cosh d_\Hyp(p,q) & -1 \\ -1 & \cosh d_\Hyp(p,q)  \end{pmatrix}.$$
In particular, the Hessian of $d_\Hyp$ is positive semidefinite and its isotropic cone is 
$$\{  (u,v)\in T_p\Hyp\times T_q\Hyp~;~u,v\ are\ tangent\ to\ the\ geodesic\  (p,q)  \}.$$
\end{corollary}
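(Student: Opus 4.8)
The plan is to read the corollary directly off the last displayed line of Theorem~\ref{thm:hessian}. The mixed second derivative $\partial^2_{(p,q)}\Dcal_{([m],p,q)}$ involves neither the shearing parameter $\a$ nor the lamination $\tilde\l$: it depends only on $\ell_{pq}=d_\Hyp(p,q)$ and on the angles $\psi_u,\psi_v$. Since $\Dcal([m],p,q)=d_{\tilde m}(p,q)$ and the metric is held fixed in this partial derivative, $\partial^2_{(p,q)}\Dcal_{([m],p,q)}$ is precisely the Hessian of the hyperbolic distance function $d_\Hyp$ at $(p,q)$, computed with the Levi-Civita connection of $\Hyp\times\Hyp$. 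Thus the corollary is essentially a repackaging of that formula into intrinsic terms.

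First I would rewrite the quantity $\|u\|\sin\psi_u$ invariantly. Fix an orientation of $\Hyp$ and let $U\in T_p\Hyp$ be the unit vector obtained by rotating the unit vector pointing toward $q$ by $\pi/2$; define $V\in T_q\Hyp$ symmetrically. Because $\psi_u$ is the oriented angle between the direction of $q$ and $u$, decomposing $u$ in the orthonormal frame consisting of the direction of $q$ and $U$ gives $\langle u,U\rangle=\|u\|\sin\psi_u$, and likewise $\langle v,V\rangle=\|v\|\sin\psi_v$. Setting $a=\langle u,U\rangle$ and $b=\langle v,V\rangle$ and writing $d=d_\Hyp(p,q)$, the last formula of Theorem~\ref{thm:hessian} becomes
\begin{equation*}
\partial^2_{(p,q)}\Dcal_{([m],p,q)}(u,v)^2=\coth(d)\,(a^2+b^2)-\frac{2ab}{\sinh d}=\frac{1}{\sinh d}\bigl(\cosh(d)\,a^2-2ab+\cosh(d)\,b^2\bigr),
\end{equation*}
which is exactly $Q(a,b)$ for the stated matrix.

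Then the positivity and the isotropic cone follow from elementary linear algebra. The symmetric matrix $\begin{pmatrix}\cosh d&-1\\-1&\cosh d\end{pmatrix}$ has trace $2\cosh d>0$ and determinant $\cosh^2 d-1=\sinh^2 d>0$ (eigenvalues $\cosh d\pm 1$), so $Q$ is positive definite after division by $\sinh d>0$. Consequently the pulled-back form $(u,v)\mapsto Q(\langle u,U\rangle,\langle v,V\rangle)$ vanishes exactly when $a=b=0$, i.e.\ when $u\perp U$ and $v\perp V$; since $U$ and $V$ are orthogonal to the geodesic direction, this means precisely that $u$ and $v$ are tangent to the geodesic $(p,q)$, which is the claimed isotropic cone. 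The only point demanding care is the sign bookkeeping: one must check that the orientation conventions defining $\psi_u,\psi_v$ and those defining the $\pi/2$-rotations $U,V$ are mutually consistent, so that the cross term appears as $-2ab$ rather than $+2ab$. Since the diagonal terms enter through squares they are insensitive to this choice, and fixing one global orientation of $\Hyp$ makes the product term correct as well. I expect this convention check to be the only potential obstacle, and it is a matter of bookkeeping rather than of substance.
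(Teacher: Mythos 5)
Your proposal is correct and follows exactly the route the paper intends: the corollary is stated as an immediate consequence of the last formula of Theorem~\ref{thm:hessian}, and your derivation simply spells out the identification $\|u\|\sin\psi_u=\langle u,U\rangle$, $\|v\|\sin\psi_v=\langle v,V\rangle$, the algebraic rewriting into the stated matrix form, and the eigenvalue computation $\cosh d\pm 1>0$ giving positive definiteness and the isotropic cone. Your remark about checking the sign conventions for the oriented angles against the $\pi/2$-rotations is the right point of care, and it is indeed only bookkeeping.
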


 As the Hessian of $d_\Hyp$ is positive-semidefinite, the Hessian of $\Dcal$ can not be positive-definite. Actually the only obstruction comes from $d_\Hyp$:

\begin{corollary}
If the projection of $[p,q]$ on $(S,m)$ intersects every leaf of $\l$, then the Hessian of $\Dcal$ at $([m],p,q)$ is positive-semidefinite, and its isotropic cone is 
$$0_{\Hold}\times \{  (u,v)\in T_p\Hyp\times T_q\Hyp~;~u,v\ are\ tangent\ to\ the\ geodesic\  (p,q)  \}.$$
\end{corollary}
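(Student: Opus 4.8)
The plan is to assemble the five blocks of Theorem~\ref{thm:hessian} into the full Hessian quadratic form and to exhibit it as the energy of a one-dimensional index form along the segment $[p,q]$. Writing $\ell_{pq}$ for the length and parametrising $[p,q]$ by arclength $s\in[0,\ell_{pq}]$ from $p$, I first push the data onto the interval: a leaf $l$ meeting $[p,q]$ at parameter $s$ has $\ell_{pl}=s$ and $\ell_{lq}=\ell_{pq}-s$, so I set $\mrm d\nu=\sin\theta_l\,\mrm d\a$ (a signed measure on $[0,\ell_{pq}]$, well defined since $\theta_l\in(0,\pi)$ forces $\sin\theta_l>0$) and $a=\|u\|\sin\psi_u$, $b=\|v\|\sin\psi_v$ (the perpendicular components of $u,v$). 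In these terms the Hessian is the quadratic form $H=A+2B+C$, where $A=\partial^2_{[m]}\Dcal(\a^2)$, $B=\partial_{[m]}\partial_{(p,q)}\Dcal(\a,u,v)$ and $C=\partial^2_{(p,q)}\Dcal((u,v)^2)$ are read off from the theorem.

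The key observation is that the kernel of $A$ is a Green's function. Since $\ell_{p\{l,h\}}=\min(s,t)$ and $\ell_{\{l,h\}q}=\ell_{pq}-\max(s,t)$, one has $\cosh\ell_{p\{l,h\}}\cosh\ell_{\{l,h\}q}=\sinh(\ell_{pq})\,G(s,t)$ with $G(s,t)=\cosh(\min(s,t))\cosh(\ell_{pq}-\max(s,t))/\sinh\ell_{pq}$, which is exactly the Green's function of the operator $\mathsf{L}=-\mrm d^2/\mrm ds^2+1$ on $[0,\ell_{pq}]$ with Neumann boundary conditions. Setting $\Phi(s)=\int_0^{\ell_{pq}}G(s,t)\,\mrm d\nu(t)$, so that $\mathsf{L}\Phi=\nu$ and $\Phi'(0)=\Phi'(\ell_{pq})=0$, an integration by parts gives $A=\int_0^{\ell_{pq}}\Phi\,\mrm d\nu=\langle\Phi,\Phi\rangle$, where $\langle f,g\rangle=\int_0^{\ell_{pq}}(f'g'+fg)\,\mrm ds$. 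Evaluating $G$ at the endpoints yields $\Phi(0)=\frac{1}{\sinh\ell_{pq}}\int\cosh(\ell_{pq}-s)\,\mrm d\nu$ and $\Phi(\ell_{pq})=\frac{1}{\sinh\ell_{pq}}\int\cosh(s)\,\mrm d\nu$, so the mixed block takes the form $B=a\,\Phi(0)\pm b\,\Phi(\ell_{pq})$. Moreover the boundary functions $g_0(s)=G(s,0)$ and $g_1(s)=G(s,\ell_{pq})$ reproduce point evaluation, $\langle\Phi,g_0\rangle=\Phi(0)$ and $\langle\Phi,g_1\rangle=\Phi(\ell_{pq})$, with Gram data $\langle g_0,g_0\rangle=\langle g_1,g_1\rangle=\coth\ell_{pq}$ and $\langle g_0,g_1\rangle=1/\sinh\ell_{pq}$.

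I then introduce the transverse displacement field $f=\Phi+a\,g_0-b\,g_1$, the unique solution of $\mathsf{L}f=\nu$ with $f'(0)=-a$ and $f'(\ell_{pq})=-b$; it is the Jacobi-type field describing how the geodesic $[p,q]$ moves under the combined variation. Expanding $\langle f,f\rangle$ with the Gram data above, all cross terms recombine so that $\langle f,f\rangle=A+C+2\big(a\,\Phi(0)-b\,\Phi(\ell_{pq})\big)$, which matches $H$ once the orientation conventions are fixed. Thus $H=\langle f,f\rangle=\int_0^{\ell_{pq}}\big((f')^2+f^2\big)\,\mrm ds\geq 0$, i.e. $H$ is the classical index form of arclength in curvature $-1$, manifestly positive-semidefinite. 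The main obstacle is precisely this sign bookkeeping: the angle $\psi_u$ is measured from the direction of $q$ at $p$ while $\psi_v$ is measured from the direction of $p$ at $q$, and these inward directions are opposite, so the perpendicular components $a$ and $b$ enter $B$ and $C$ with a relative sign; making this consistent (together with the convention that leaves point to the left) is what forces the cross terms in $\langle f,f\rangle$ to cancel exactly against those of $C$. A minor technical point is that $\nu$ is only a distribution, so $f\in H^1$ acquires corners at the atoms of $\nu$ and the energy identity must be read in the weak integration-by-parts sense.

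Finally I read off the isotropic cone: $H=\langle f,f\rangle=0$ forces $f\equiv 0$, hence $\nu=\mathsf{L}f=0$ together with $a=-f'(0)=0$ and $b=-f'(\ell_{pq})=0$. Since every leaf crosses $[p,q]$ transversally with $\sin\theta_l>0$, and since by hypothesis the projection of $[p,q]$ on $(S,m)$ meets every leaf of $\l$, the assignment $\a\mapsto\nu$ is injective on $\Hold$; therefore $\nu=0$ gives $\a=0$. The conditions $a=b=0$ read $\|u\|\sin\psi_u=\|v\|\sin\psi_v=0$, i.e. $u$ and $v$ are tangent to the geodesic $(p,q)$. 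This identifies the isotropic cone with $0_{\Hold}\times\{(u,v)\in T_p\Hyp\times T_q\Hyp~;~u,v\text{ tangent to }(p,q)\}$, as claimed; geometrically these are exactly the directions in which sliding the endpoints along the geodesic varies $d_{\tilde m}(p,q)$ affinely, so that its second variation vanishes.
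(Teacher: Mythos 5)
Your argument is correct in outline but takes a genuinely different route from the paper. The paper discretizes: it restricts to the subspace $E$ of finite linear combinations of Dirac masses on leaves $l_1,\dots,l_N$, writes the Hessian as an explicit symmetric matrix $H$ in the variables $\sin\theta_i a_i,\langle u,U\rangle,\langle v,V\rangle$, observes that $H$ has strictly dominant diagonal (hence is positive definite), and obtains the isotropic cone by splitting $H=H'+D$ with $H'$ diagonally dominant and $D$ a positive diagonal matrix whose entries are bounded below independently of the approximation; the general transverse H\"older distribution is then handled by passing to the limit along the approximating sequence $\a_n$ with this uniform lower bound. You instead recognize the kernel $\cosh\ell_{p\{l,h\}}\cosh\ell_{\{l,h\}q}/\sinh\ell_{pq}$ as the Neumann Green's function of $-\mrm d^2/\mrm ds^2+1$ on $[0,\ell_{pq}]$ and exhibit the whole Hessian as the index form $\int((f')^2+f^2)\,\mrm ds$ of a single transverse displacement field $f$, so that positivity is a sum of squares and the isotropic cone is read off from $f\equiv 0$. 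This is more conceptual --- it identifies the Hessian with the second variation of arclength and explains \emph{why} the paper's diagonal-dominance works --- while the paper's version is more robust for distributional $\a$.

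Two points must be settled before your proof stands. First, the sign bookkeeping you defer is not cosmetic: with the formulas of Theorem~\ref{thm:hessian} read literally ($B=a\Phi(0)+b\Phi(\ell_{pq})$ with both signs positive, and cross term $-2ab/\sinh\ell_{pq}$ in $C$), no choice $f=\Phi+\epsilon_1 a g_0+\epsilon_2 b g_1$ reproduces the form, since matching the $\Phi$-cross-terms forces $\epsilon_1=\epsilon_2=+1$ while matching the $g_0g_1$ term forces $\epsilon_1\epsilon_2=-1$. (Taken at face value those formulas even fail to be positive semidefinite for a single Dirac mass at the midpoint with $a=b$ and $\ell_{pq}$ small, so one stated sign must be converted to the consistent-normal convention; your $f=\Phi+ag_0-bg_1$ is the right object, but you need to fix the convention explicitly and verify the cross terms rather than assert that they ``recombine''.) Second, your energy identity is clean when $\nu$ is a measure, but a transverse H\"older distribution need not be one; to make $\Phi$, $f$ and $\langle f,f\rangle$ meaningful for general $\a\in\Hold$, and to get the isotropic cone in the limit, you still need the approximation by Dirac combinations together with a lower bound uniform in the approximation --- exactly the role played by the paper's matrices $H'$ and $D$. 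Your treatment of the kernel itself (injectivity of $\a\mapsto\nu$ because $[p,q]$ meets every leaf with $\sin\theta_l>0$, and $a=b=0$ meaning $u,v$ tangent to $(p,q)$) is correct.
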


To prove this corollary we follow the ideas of \cite[\textsection 6]{gendulphe-shearing}, and we provide an explicit lower bound on the Hessian $(\hess\Dcal)_{([m],p,q)}(\a,u,v)$.

\begin{proof}
 We fix a finite number of leaves $l_1,\ldots l_N$ of $\tilde \l$ that intersect $[p,q]$ enumerated from $p$ to $q$. We denote by $E$ the space of linear combinations of the Dirac measures $\d_{l_1},\ldots, \d_{l_N}$. This is a subspace of the space of transverse H\"older distributions on $[p,q]$ with support contained in $[p,q]\cap \tilde \l$.\par
 We look at $\Dcal$ as a smooth function over $E\times \Hyp^2$. To any element $\sum_i a_i \d_{l_i}$ of $E$ corresponds the deformation of $\Hyp$ obtained by shearing of an amount $a_i$ along $l_i$ for each $i=1,\ldots, n$. The formulas of the theorem~\ref{thm:hessian} work also in this context, because they were first established for a shear along one leaf.\par
 The Hessian of $\Dcal$ (as a function over $E\times \Hyp^2$) can be written as a quadratic form in the variables $\sin\theta_1 a_1,\ldots,\sin\theta_n a_n,\langle u,U \rangle, \langle v,V \rangle $ whose matrix $H$ is given by 
$$H=
 \begin{pmatrix}
   \cosh\ell_{pl_1}\cosh\ell_{l_1 q}  & \ldots &  \cosh\ell_{pl_1}\cosh\ell_{l_n q} & &\cosh\ell_{pl_1} & \cosh\ell_{l_1 q} \\
   \vdots                                         &   \ddots          & \vdots  & &\vdots & \vdots \\
   \cosh\ell_{pl_1}\cosh\ell_{l_n q} &  \ldots & \cosh\ell_{pl_n}\cosh\ell_{l_n q} & & \cosh\ell_{pl_n} & \cosh\ell_{l_n q} \\ \\
  \cosh\ell_{pl_1} & \ldots &  \cosh\ell_{pl_n}  & & \cosh\ell_{pq} & -1 \\
  \cosh\ell_{l_1q} & \ldots &  \cosh\ell_{l_nq}  & & -1 & \cosh\ell_{pq}  \\   
 \end{pmatrix}.$$ 
The diagonal is strictly dominant ($a_{ii}>|a_{ij}|$ for any $i,j$), therefore $H$ is positive-definite (see \cite[Lemma 6.1]{gendulphe-shearing}). This implies immediately that the Hessian of $\Dcal$ over $E\times \Hyp^2$ is positive-semidefinite, and that its isotropic cone is $$0_E\times \{  (u,v)\in T_p\Hyp\times T_q\Hyp~;~u,v\ \textnormal{are\ tangent\ to\ the\ geodesic}\  (p,q)  \}.$$
Actually we can be more precise. For each leaf $l_i$ we denote by $\e_i$ the minimal distance on $[p,q]$ between $l_i$ and any other leaf of $\tilde \l$. We note $\e_p$ (resp. $\e_q$) the minimal distance on $[p,q]$ between $p$ (resp. $q$) and any leaf of $\tilde \l$. Finally, we denote by $H'$ the matrix which has same entries as $H$ outside the diagonal, and diagonal entries given by
$$\begin{array}{ccll}
 (H')_{i,i} & = & \cosh\ell_{pl_i}\cosh(\ell_{l_iq}-\e_i) & \textnormal{for } i=1,\ldots,n; \\
 (H')_{n+1,n+1} & = &\cosh(\ell_{pq}-\e_p) & \\
 (H')_{n+2,n+2} & = &\cosh(\ell_{pq}-\e_q) & \\
 \end{array}$$
By construction, $H'$ has a dominant diagonal ($|a_{ii}|\geq a_{ij}$ for any $i,j$), and consequently is positive-semidefinite.
The difference $D=H-H'$ is a diagonal matrix whose entries can be bounded from below as follows: 
\begin{eqnarray*}
\cosh\ell_{pl_i} (\cosh(\ell_{l_i q})-\cosh(\ell_{l_i q}-\e_i)) & \geq & \cosh\ell_{pl_i} \sinh(\ell_{l_i q}-\e_i) \e_i ,\\
\cosh\ell_{pq} -\cosh(\ell_{l_i q}-\e_p) & \geq & \sinh(\ell_{pq}-\e_p) \e_p.
\end{eqnarray*}
Note that the lower bounds depend only on $p,q$ and $\tilde \l$. This gives a lower bound for the Hessian of $(\hess\Dcal)_{([m],p,q)}$ evaluated at $(\sum a_i \d_{l_i},u,v)$.\par
 Now, let us fix $\a\in\Hold-\{0\}$, and vectors $u\in T_p\Hyp,v\in T_q\Hyp$. The transverse H\"older distribution $\a$ is the limit of a sequence $(\a_n)_n$ of linear combination of Dirac measures (see \cite[Lemma~3.1]{gendulphe-shearing}), associated to  an increasing sequence $(\mc P_n)_n$ of subsets of $\mc P_{pq}$. Let $l_1,\ldots,l_k$ be a sequence of isolated leaves of $\tilde \l$ that intersects $[p,q]$. For any $n$ big enough, $\mc P_n$ contains the components of $\tilde S-\tilde \l$ adjacent to the $l_i$'s, and the hessian of $(\hess\Dcal)_{([m],p,q)}$ evaluated at $(\a_n,u,v)$ is bounded from below by 
$$\left(\sum_{i=1,\ldots,k} \cosh\ell_{pl_i}\sinh(\ell_{l_iq}-\e_i)\e_i \sin^2\theta_{l_i}\right)+\sinh(\ell_{pq}-\e_p)\e_p \langle u,U\rangle^2+\sinh(\ell_{pq}-\e_q)\e_q \langle v,V\rangle^2 .$$
 This bound does not depend on $n$, and thus implies the corollary as $\sin\theta_l$ is positive for any leaf $l$.
\end{proof}

We will see applications of these result to the study of topological singularities of $\sys_p$ (\textsection ), and to the study of variations of various functions (\textsection ).

%%%%%%%%%%%%%%%%%%%%%%%%%%%%%%%%%%%%%%%%%
\section{Proofs of the formulas}
%%%%%%%%%%%%%%%%%%%%%%%%%%%%%%%%%%%%%%%%%

To prove these formulas we follow the same line of arguments as in \cite{gendulphe-shearing}: we compute explicitly the derivatives for a shear along one geodesic

\subsection{Shearing coordinates}
We have explained in how to make effective computations in the shearing coordinates. The method is due to Bonahon, and proceed by an approximation
We identity $(\tilde S,\tilde m)$ with $\Hyp$ and we use the notations of , that are essentially the notations introduced by Bonahon.
In particular, we consider the point $p$ fixed, whereas $q$ is moved by the isometry $\f$.

Notation: we perform a shearing along $l$, or $h$ but in both cases these geodesics are pointwise fixed. We denote by $g(t)$ the geodesic passing through $p$ and $q(t)$, given a leaf $l$ we denote by $l(t)$ the intersection point, we denote by $f'_l$ the function such that the velocity vector of $l\cap g(t)$ is $f'_l l'$ where $l'$ is the unit tangent vector field along $l$.

 We fix two points $p,q$ that belong to two distinct components $P,Q$ of $\tilde S - \tilde \l$. We denote by $\mc P_{P,Q}$ the set of components of $\tilde S -\tilde \l$ that separate $P$ from $Q$.

%%%%%%%%%%%%%%%%%%%%%%%%%%%%%%%%%%%%%
\subsection{First derivative}
%%%%%%%%%%%%%%%%%%%%%%%%%%%%%%%%%%%%%

Given a leaf $l$ that separate $p$ from $q$, we denote by $V(l)$ the vector field on $\tilde S$ defined by:
 \begin{eqnarray*}
V_z(l)& = & \left[\frac{\mrm d}{\mrm d t} T_l^t(z)\right]_{t=0},
 \end{eqnarray*}
 where $T_l^t$ is the isometry of $(\tilde S,\tilde m)$ that translates by a length $t$ along $l$ in the positive direction. We denote by $Z$ the unit vector field on $\tilde S-p$ pointing in the direction of $p$, note that the gradient of $z\mapsto d_{\tilde m}(p,z)$ is equal to $-Z$. The approximation method used in gives
 \begin{eqnarray*}
\partial\Dcal_{([m],p,q)}(\a) =- \int_{\g} \langle Z_q, V_q(l) \rangle\  \mrm d \a(l).
 \end{eqnarray*}
By construction $V(l)$ is Killing, which means that the associated one parameter subgroup of diffeomorphisms is a groups of isometries. We recall the following elementary property of Killing vector fields:

\begin{lemmanonumber}
Let $V$ be a Killing field vector field, and $\g$ be a geodesic. The scalar product $\langle V, \g' \rangle$ is constant.
\end{lemmanonumber}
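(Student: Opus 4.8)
The plan is to prove this through the Levi-Civita connection $\nabla$ of the hyperbolic metric, using the algebraic characterization of Killing fields rather than working directly with the flow. The first step is to recall that a field $V$ whose local flow consists of isometries satisfies the \emph{Killing equation} $\langle \nabla_X V, Y\rangle + \langle \nabla_Y V, X\rangle = 0$ for all $X$ and $Y$; equivalently, the endomorphism $X \mapsto \nabla_X V$ is skew-symmetric with respect to the metric. I would obtain this by differentiating at $s = 0$ the relation expressing that the time-$s$ map of $V$ preserves $\langle\cdot,\cdot\rangle$, which is exactly the hypothesis that $V$ is Killing.

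Granting this, let $\g$ be a geodesic and set $f(t) = \langle V, \g'\rangle$ along $\g$. Differentiating with the metric compatibility of $\nabla$ gives
\[
f'(t) = \langle \nabla_{\g'} V, \g'\rangle + \langle V, \nabla_{\g'}\g'\rangle.
\]
The second term vanishes because $\g$ is a geodesic, so $\nabla_{\g'}\g' = 0$. The first term vanishes by the Killing equation applied with $X = Y = \g'$, since a skew-symmetric bilinear form is zero on the diagonal. Hence $f' \equiv 0$, so $\langle V, \g'\rangle$ is constant along $\g$, as claimed.

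There is no genuine obstacle here, as the statement is classical; the only point requiring care is which definition of \emph{Killing} is in force. Since the excerpt introduces Killing fields through the associated one-parameter group of isometries, I would make the derivation of the skew-symmetry of $\nabla V$ explicit, or, equivalently, run the flow-based argument: the isometries generated by $V$ carry $\g$ to a family of geodesics of equal speed, so that $\langle V, \g'\rangle$ is recognized as a first integral of the geodesic flow. Either route reduces the lemma to the one-line connection computation above.
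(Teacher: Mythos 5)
Your proof is correct and complete: the skew-symmetry of $X\mapsto\nabla_X V$ coming from the Killing equation, combined with $\nabla_{\g'}\g'=0$, gives $\frac{\diff}{\diff t}\langle V,\g'\rangle=0$ directly. The paper itself offers no proof — it merely recalls the lemma as an elementary classical fact — so there is nothing to compare against; your connection-based computation is the standard argument and fills that gap cleanly.
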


As a consequence,  we have $\langle Z_q,V_q(l)  \rangle=\langle Z_{q'},V_{q'}(h) \rangle$ where $q'$ is the intersection point of $l$ with $[p,q]$.
As the restriction of $V(l)$ to $l$ is the unit tangent vector field, we find $-\langle Z_{q'},V_{q'}(h) \rangle=\cos\theta_l$ and we obtain 
\begin{eqnarray*}
\partial \Dcal_{([m],p,q)}(\a) = \int_{\g} \cos\theta_l \ \mrm d \a(l).
\end{eqnarray*}
We obviously have $\partial \Dcal_{([m],p,q)}(u,v)=\|u\| \cos\psi_u+\|v\|\cos\psi_v$.

%%%%%%%%%%%%%%%%%%%%%%%%%%%%%%%%%%%%%
\subsection{Second derivative in $\a$}\label{sec:derivative-alpha}
%%%%%%%%%%%%%%%%%%%%%%%%%%%%%%%%%%%%%

Let $h,l$ be two distinct leaves of $\tilde \l$ that separate $p$ from $q$. We want to compute the derivative of $\cos\theta_l$ when shearing along $h$. For symmetry reasons, we assume that $h$ separate $l$ from $q$. This is convenient because the geodesic $l$ is not moved by the shear along $h$.\par

 We denote by $q(t)$ the point $T^t_h(q)$, and by $g(t)$ the half-geodesic starting at $p$ and passing through $q(t)$. The geodesic $g(t)$ is obtained by rotating $g$ by an angle $\rho(t)$ at $p$. We will use the following obvious lemma:

\begin{lemma}\label{lem:angle}
Let $C$ be a smooth curve that intersects $g$ transversely in one point. We denote by $c(t)$ the intersection point $C\cap g(t)$ for $t$ sufficiently small. We have 
\begin{eqnarray*}
\|c'(0)\| & = & \frac{\rho'(0) \sinh d(p,c(0)) }{\sin\theta},
\end{eqnarray*}
where $\theta$ is the angle at $c(0)$ from $g$ to $c$. 
\end{lemma}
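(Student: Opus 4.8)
The plan is to set up local coordinates along $g$ and track how the intersection point $c(t) = C \cap g(t)$ moves as the half-geodesic rotates about $p$. Fix the center of rotation $p$ and let $g(t)$ denote the image of $g$ after rotating by the angle $\rho(t)$ at $p$, with $\rho(0)=0$ and $c(0)$ the initial intersection point at distance $d(p,c(0))$ along $g$. The natural strategy is to decompose the velocity $c'(0)$ into its components along $g$ and transverse to $g$, and then to use the transversality of $C$ with $g$ to convert the transverse displacement into a displacement along $C$ itself.

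First I would compute the instantaneous velocity at $c(0)$ of the rotating geodesic $g(t)$, viewed as a point that is constrained to lie on $g(t)$ at fixed arclength-type parameter. An infinitesimal rotation by $\rho'(0)$ about $p$ is generated by a Killing field vanishing at $p$; its value at a point $x$ has magnitude $\rho'(0)\sinh d(p,x)$ and points orthogonally to the geodesic $[p,x]$. Thus the point of $g$ at distance $d(p,c(0))$ is carried with transverse speed $\rho'(0)\sinh d(p,c(0))$, directed perpendicular to $g$ at $c(0)$. This is exactly the hyperbolic analog of the Euclidean formula (speed $=$ radius $\times$ angular velocity), with $\sinh$ replacing the radius.

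Next I would relate this transverse displacement to the motion of $c(t)$ along the curve $C$. Since $c(t)$ must remain on $C$, and $C$ meets $g$ transversely at angle $\theta$, the component of the rotation velocity perpendicular to $g$ must be accounted for by sliding along $C$. Projecting the constraint onto the direction normal to $g$, the normal component of $c'(0)$ equals the full transverse rotation speed $\rho'(0)\sinh d(p,c(0))$, while $c'(0)$ is tangent to $C$. Writing $\|c'(0)\|\sin\theta$ for the normal component of the $C$-velocity (where $\theta$ is the angle from $g$ to $C$ at $c(0)$) and equating gives
\begin{eqnarray*}
\|c'(0)\|\sin\theta & = & \rho'(0)\sinh d(p,c(0)),
\end{eqnarray*}
which rearranges to the claimed identity. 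The key point is that only the normal component of the displacement is fixed by the rotation; the tangential component is free, so one divides by $\sin\theta$ to recover the speed along $C$.

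The main obstacle I anticipate is making the decomposition rigorous to first order: one must verify that $c(t)$ is well-defined and differentiable for small $t$ (guaranteed by transversality via the implicit function theorem, since $\sin\theta\neq 0$), and that the velocity of the rotating geodesic at the relevant point really is the Killing-field value with magnitude $\rho'(0)\sinh d(p,c(0))$ rather than some parametrization-dependent quantity. A clean way to handle this is to observe that the rotation about $p$ is an isometric flow whose generating vector field $X$ satisfies $\|X_x\| = \rho'(0)\sinh d(p,x)$ and $X_x \perp [p,x]$, so that at $c(0)$ the field $X$ is exactly the normal-to-$g$ velocity; then the transversality of $C$ supplies the factor $\sin\theta$ upon projecting. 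Everything else is a routine first-order expansion, so the statement follows.
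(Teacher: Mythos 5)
Your proof is correct. The paper states this as an ``obvious lemma'' and gives no proof at all, so there is nothing to compare against; your argument --- writing $c(t)=\gamma_t(s(t))$ so that $c'(0)$ splits into the Killing-field term $X_{c(0)}$ (perpendicular to $g$, of norm $\rho'(0)\sinh d(p,c(0))$, since circles of radius $r$ about $p$ have circumference $2\pi\sinh r$) plus a term tangent to $g$, and then extracting the normal component $\|c'(0)\|\sin\theta$ of the $C$-tangent vector $c'(0)$ --- is exactly the computation the author is implicitly relying on, and the transversality/implicit-function-theorem remark correctly justifies that $c(t)$ is well defined and smooth.
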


\begin{lemma}\label{lem:angle-shearing}
We have
\begin{eqnarray*}
\rho'(0) & = & \frac{\cosh\ell_{hq}\sin\theta_h }{\sinh\ell_{pq}}.
\end{eqnarray*}
\end{lemma}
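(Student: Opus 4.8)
The plan is to feed the orbit of $q$ under the shearing flow into Lemma~\ref{lem:angle}. Take $C$ to be the curve $t\mapsto T^t_h(q)$. Since $g(t)$ is by definition the half-geodesic from $p$ through $q(t)=T^t_h(q)$, the intersection point $C\cap g(t)$ is exactly $q(t)$, so in the notation of Lemma~\ref{lem:angle} we have $c(t)=q(t)$, $c(0)=q$, and $d(p,c(0))=\ell_{pq}$. Its velocity is $c'(0)=\bigl[\tfrac{\mathrm d}{\mathrm dt}T^t_h(q)\bigr]_{t=0}=V_q(h)$, and the relevant angle $\theta$ is the angle at $q$ between $g$ and $V_q(h)$. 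Solving the formula of Lemma~\ref{lem:angle} for $\rho'(0)$ gives
$$\rho'(0)=\frac{\|V_q(h)\|\,\sin\theta}{\sinh\ell_{pq}},$$
so everything reduces to computing the \emph{component of $V_q(h)$ orthogonal to $g$}, namely $\|V_q(h)\|\sin\theta$.

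To evaluate this I would compute the tangential and total contributions separately. For the tangential part, recall that $V(h)$ is Killing, so by the elementary Killing lemma recalled above the scalar product $\langle V(h),g'\rangle$ is constant along $g$; evaluating it at the intersection point $q'=h\cap[p,q]$, where $V_{q'}(h)$ is the unit tangent to $h$ making angle $\theta_h$ with $g$, shows that this constant equals (up to sign) $\cos\theta_h$. Hence $\|V_q(h)\|\cos\theta=\cos\theta_h$. For the norm, I use that the Killing field generating the translation $T^t_h$ has unit speed on the axis $h$ and speed $\cosh r$ at signed distance $r$ from $h$ (one sees this from the Fermi-coordinate metric $\mathrm dr^2+\cosh^2 r\,\mathrm ds^2$), so $\|V_q(h)\|=\cosh r_q$ where $r_q=d(q,h)$.

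Combining these, $\|V_q(h)\|^2\sin^2\theta=\cosh^2 r_q-\cos^2\theta_h$. It remains a purely trigonometric simplification. In the right triangle with vertices $q$, $q'$ and the foot of the perpendicular from $q$ to $h$, the hypotenuse is $\ell_{hq}$ and the angle at $q'$ is $\theta_h$, so the right-triangle relation gives $\sinh r_q=\sinh\ell_{hq}\sin\theta_h$. Substituting,
$$\cosh^2 r_q-\cos^2\theta_h=\sinh^2 r_q+\sin^2\theta_h=(\cosh^2\ell_{hq}-1)\sin^2\theta_h+\sin^2\theta_h=\cosh^2\ell_{hq}\sin^2\theta_h,$$
and since $\theta_h\in(0,\pi)$ has $\sin\theta_h>0$ we get $\|V_q(h)\|\sin\theta=\cosh\ell_{hq}\sin\theta_h$, which inserted above yields the stated formula. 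The only genuinely delicate point is the first step: recognizing that taking $C$ to be the shearing orbit of $q$ makes $c(t)=q(t)$ automatic, and that the $\theta$ appearing in Lemma~\ref{lem:angle} is precisely the angle between $g$ and the Killing direction $V_q(h)$; once that identification is secured, the remaining computation is routine hyperbolic trigonometry and the conventions (orientations, signs in $\cos\theta_h$) are harmless because only $\cos^2$ and $\sin^2$ enter.
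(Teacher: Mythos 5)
Your proof is correct and follows the same skeleton as the paper's: both apply Lemma~\ref{lem:angle} to the orbit $t\mapsto T^t_h(q)$, both use the Fermi-coordinate fact $\|q'(0)\|=\cosh r$ with $r=d(q,h)$, and both close with the right-triangle relation $\sinh r=\sinh\ell_{hq}\sin\theta_h$. The one step you handle differently is the extraction of the transverse component of $q'(0)$: the paper identifies the angle between $g$ and the trajectory as $\pi/2-\psi$ in the right triangle $q$, $s$, $h\cap g$ and then invokes the relation $\cos\psi=\tanh r/\tanh\ell_{hq}$, whereas you bypass that triangle entirely by using the Killing-field invariance of $\langle V(h),g'\rangle$ along $g$ to get the tangential part $\cos\theta_h$ and then Pythagoras to get $\|V_q(h)\|^2\sin^2\theta=\cosh^2 r-\cos^2\theta_h=\cosh^2\ell_{hq}\sin^2\theta_h$. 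Your route is arguably tidier in that it reuses the same Killing-field lemma the paper already deployed for the first derivative, at the small cost of only determining $\rho'(0)$ up to sign; since the paper's own statement and proof do not track the orientation of $\rho$ carefully either, this is not a genuine gap, but if you want the signed formula you should add one sentence fixing the direction in which $g(t)$ rotates.
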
 
 
\begin{proof}
An elementary computation (with Fermi coordinates for instance) shows that 
$$\| q'(t)\|=\cosh r,$$
where $r$ is the distance from $q$ to $h$.
Let us denote by $s$ the orthogonal projection of $q$ on the geodesic $h$, so that $r=d_{\tilde m}(s,q)$.
In the right-angled triangle with vertices $q$, $s$ and $h\cap g$, we call $\psi$ the angle at $q$. The segment $[s,q]$ is orthogonal to the trajectory $t\mapsto q(t)$, thus the angle between $g$ and the trajectory $t\mapsto q(t)$ is equal to $\pi/2-\psi$. The above lemma gives
 $$\cosh r=\|q'(0)\| = \frac{\rho'(0) \sinh\ell_{pq}}{\cos\psi}. $$
In the right-angled triangle we have the relation $\cos\psi=\tanh r /\tanh \ell_{hq}$, that implies 
$$\rho'(0)= \frac{\sinh r}{\tanh \ell_{hq} \sinh\ell_{pq}}.$$
We conclude with the relation $\sinh r=\sinh \ell_{hq} \sin\theta_h$.
\end{proof} 
 
 Let us fix an origin on $l$ and denote by $f_l(t)$ the signed distance between this origin and $l \cap g(t)$. From the two previous lemmas we obtain
 $$f'_l=\frac{\cosh \ell_{hq}\sinh \ell_{pl} \sin\theta_h}{\sinh\ell_{pq}\sin\theta_l}.$$
Then the last equality in gives
\begin{eqnarray*}
\left[\frac{\mrm d }{\mrm dt} \cos\theta_l \right]_{t=0} & = & -f'_l \sin^2\theta_l \coth(\ell_{pl}), \\
 &= & \frac{\cosh\ell_{pl}\cosh\ell_{hq}}{\sinh\ell_{pq}} \sin\theta_l  \sin\theta_h.
\end{eqnarray*}

\subsection{Second derivative in $(u,v)$}%%%%%%%%%%%%%%%%%
We have to differentiate the function $(p,q)\mapsto \cos \psi_u$.
The last equality in \cite{gendulphe} gives immediately 
\begin{eqnarray*}
\partial_p (\cos\psi_u)(u) &=& \|u\| \sin^2\psi_u \coth\ell_{pq} .
\end{eqnarray*}
Using the Lemma~\ref{lem:angle} above we find
\begin{eqnarray*}
\partial_q(\psi_u)(v) & = &  \frac{ \|v\|\sin\psi_v}{\sinh\ell_{pq}},
\end{eqnarray*}
so that
\begin{eqnarray*}
\partial_q (\cos\psi_u)(v)  & = & -\sin\psi_u \ \partial_q(\psi_u)(v),\\
 & = & -\frac{\|v\| \sin\psi_u \sin\psi_v}{\sinh\ell_{pq}}. 
\end{eqnarray*}

\subsection{Second derivative in $\a$ and $(u,v)$}%%%%%%%%%%%%%%%%%
We have to differentiate $\cos\psi_u$ as $q$ moves according to the deformation defined by $\a$. The Lemma~\ref{lem:angle-shearing} gives 
\begin{eqnarray*}
\partial_{[m]}( \psi_u)(\a) & = &- \frac{1}{\sinh\ell_{pq}}  \int_{[p,q]} \cosh\ell_{lq}\sin\theta_l\ \mrm d \a(l).
\end{eqnarray*}
So we find
\begin{eqnarray*}
\partial_{[m]}(\cos\psi_u)(\a) & = &- \sin\psi_u\  \partial_{[m]}( \psi_u)(\a),\\
 &= & \frac{\sin\psi_u}{\sinh\ell_{pq}}  \int_{[p,q]} \cosh\ell_{lq}\sin\theta_l\ \mrm d \a(l).
\end{eqnarray*}
Similalry we have
\begin{eqnarray*}
\partial_{[m]}(\cos\psi_v)(\a) & = & \frac{\sin\psi_v}{\sinh\ell_{pq}}  \int_{[p,q]} \cosh\ell_{pl}\sin\theta_l\ \mrm d \a(l).
\end{eqnarray*}

%%%%%%%%%%%%%%%%%%%%%%%%%%%%%%%%%%%%%%%%%
\section{Extreme points of $\sys_p$}
%%%%%%%%%%%%%%%%%%%%%%%%%%%%%%%%%%%%%%%%%

\begin{theorem}
Let $S$ be a compact surface with $k\geq 0$ boundary components $b_1,\ldots, b_k$. A point $[m]\in\teich(S,p)$ is extreme for $\sys_p$ if and only if its systolic loops at $p$ divide $S$ into equilateral triangles and one holed monogons. In that case $\sys_p(m)$ is the unique positive solution of the equation
$$6(-2\chi(S)+2-k) \mathrm{arcsin}\left(\frac{1}{2\cosh(x/2)} \right)+ 2\sum_{i=1}^k \mrm{arcsin}\left(\frac{\cosh(\ell(b_i)/2)}{\cosh(x/2)} \right)~ =~2\pi. $$
\end{theorem}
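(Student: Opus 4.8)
The plan is to carry over, almost verbatim, the strategy used for Theorem~\ref{thm:max-arcs}, with geodesic loops based at $p$ playing the role of arcs. First I would record that condition $(H)$ holds on the relevant open set. By the second corollary to Theorem~\ref{thm:hessian}, the Hessian of $\Dcal$ is positive-semidefinite with one-dimensional isotropic cone, namely the direction in which $p$ and $q$ slide jointly along the geodesic $(p,q)$. For a systolic loop $\g$ one has $\ell_\g([m]_p)=\Dcal([m],\tilde p,\g\cdot\tilde p)$, and the constraint $q=\g\cdot\tilde p$ kills the isotropic direction unless the three points $\tilde p,\g\tilde p,\g^2\tilde p$ are collinear, i.e. unless $\g$ is a closed geodesic. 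Thus on $\teich(S,p)-\short(S,p)$ the Hessians of the systolic $\ell_\g$ are positive-definite and Theorem~\ref{thm:voronoi} with its Corollary~\ref{cor:voronoi} applies; a short separate argument (pushing $p$ transversally to a systolic closed geodesic) excludes local maxima on $\short(S,p)$, so every extreme point lies in the regular locus.

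The forward direction then mirrors the arcs case. If $[m]$ is extreme, Corollary~\ref{cor:voronoi} gives $|\Sys([m])|\ge\dim\teich(S,p)+1$, while a standard shortening argument shows that two systolic loops cannot cross off $p$, so they are simple and pairwise disjoint except at $p$. I would then prove the counting lemma: such a family has at most $-3\chi(S)-k+3=\dim\teich(S,p)+1$ members, with equality exactly when the loops cut $S$ into triangles and one-holed monogons. The count follows by capping each boundary component to form a closed $\hat S$ and applying $V-E+F=\chi(\hat S)=\chi(S)+k$ with $V=1$, $E=L$ loops, $F=t+k$ disks, together with the incidence relation $2L=3t+k$; this yields $t=-2\chi(S)+2-k$ triangles. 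Hence the systolic loops realize the maximum, and since they all have the common length $\sys_p(m)$ the triangles are equilateral.

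For the equation I would use that the corners at $p$ fill out $2\pi$. An equilateral triangle of side $x=\sys_p(m)$ has vertex angle $\alpha$ with $\cos\alpha=\cosh x/(\cosh x+1)$; the identity $\cosh x+1=2\cosh^2(x/2)$ then gives the clean form $\alpha=2\,\mathrm{arcsin}\!\bigl(1/(2\cosh(x/2))\bigr)$. For the monogon around $b_i$ I would lift to $\Hyp$: the loop lifts to a geodesic segment of length $x$ joining $\tilde p$ to its image under the length-$\ell(b_i)$ translation along the lift of $b_i$, and the common perpendiculars produce a Lambert quadrilateral with sides $\ell(b_i)/2$ and $x/2$ and acute angle $\beta_i/2$ at $\tilde p$, whence $\sin(\beta_i/2)=\cosh(\ell(b_i)/2)/\cosh(x/2)$. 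Summing $3t\,\alpha+\sum_i\beta_i=2\pi$ and substituting $t=-2\chi(S)+2-k$ produces the displayed equation; its left-hand side is strictly decreasing in $x$ on its domain, which gives uniqueness of the positive solution, and the value $\ell(b_i)=0$ is the parabolic degeneration giving the cusp term.

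For the converse I would show, as in Part~2, that a metric whose systolic loops cut $S$ into equilateral triangles and one-holed monogons is perfect and eutactic, hence extreme by Theorem~\ref{thm:voronoi}: perfection is the statement that the $\dim\teich(S,p)+1$ differentials $\mrm d\ell_\g$ affinely span $T^\ast_{[m]}\teich(S,p)$, and eutaxy amounts to placing the origin in the affine interior of their convex hull. I expect the main obstacle to be exactly this last point: unlike the constant boundary-length identity of \textsection\ref{sec:relation} that furnished eutaxy for $\sys_B$, here one must extract the analogous linear relation among the $\mrm d\ell_\g$ from the fact that the total angle at $p$ equals $2\pi$, reading it off the first-derivative formula of Theorem~\ref{thm:hessian}. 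By comparison the Lambert-quadrilateral evaluation of $\beta_i$ and the Euler-characteristic count are routine, and the argument is uniform over orientable and nonorientable $S$ (passing to the orientation double when needed).
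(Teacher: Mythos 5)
Your proposal is correct and follows essentially the same route as the paper: Corollary~\ref{cor:voronoi} forces at least $\dim\teich(S,p)+1$ systolic loops, the Euler-characteristic count with $2L=3t+k$ identifies the triangle-and-monogon decomposition, the angle sum $3t\alpha+\sum_i\beta_i=2\pi$ yields the displayed equation, and the converse rests on the eutaxy/perfection argument via the total-angle relation, exactly as in the paper's later sections on the critical points of $\sys_p$. Your extra care about condition $(H)$ failing on $\short(S,p)$ and about excluding extreme points from that locus also matches what the paper does in its section on topological singularities.
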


\begin{proof}
The existence of a global maximum comes from standard arguments. Let $[m]_p$ be a metric with $\sys(m)$ very small, then no $m$-systolic loop at $p$ intersect the geodesics realizing the systole. So we can cut this geodesic, we increase the length of the corresponding boundary components, and we glue them back. This gives another hyperbolic surfaces $(S',m')$ for which there is a point $p'$ with $\sys_{p'}(m')>\sys_p(m)$. We refer to \cite{gendulphe-schwartz} for more details.\par
Let $[m]$ be an extreme point, then $[m]$ has at least $\dim\teich(S,\partial S)+1=-3\chi(S)+3-k$ systolic loops at $p$ (Theorem~\ref{thm:voronoi}). By minimality of their length, these systolic loops do not intersect outside $p$. 
But, an easy computation of Euler characteristic shows that the minimal cardinality of a set of non homotopic loops that intersect only at $p$ is also $-3\chi(S)+3-k$. So we deduce that the $m$-systolic loop at $p$ decompose $S$ into triangles and monogons. 
The number of triangles is $-2\chi(S)+2-k$, and the number of monogons is $k$. This shows one implication, the other implication is trivial.
\end{proof}

%%%%%%%%%%%%%%%%%%%%%%%%%%%%%%%%%%%%%%%%%
\section{Topological singularities of $\sys_p$}
%%%%%%%%%%%%%%%%%%%%%%%%%%%%%%%%%%%%%%%%%

 Any homotopy class $\g\in \pi_1(S,p)$ defines a smooth length function
$$\ell_\g:\teich(S,p)\rightarrow \R.$$

Let $\l$ be a maximal lamination of $S$ such that intersects transversely $\g$.

\begin{lemma}
Let $m$ be a hyperbolic metric on $S$ such that $p$ does not belong to the $m$-realization of $\l$. Then Hessian $(\hess\ell_\g)_{[m]_p}$ in the shearing coordinates with respect to $\l$ is positive-semidefinite, and positive-definite if and only if $\sys_p(m)$ is not realized by a closed $m$-geodesic passing through $p$.  
\end{lemma}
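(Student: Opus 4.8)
The plan is to recognize $\ell_\g$ as a pullback of the distance function $\Dcal$ and to read off its Hessian from the corollary on the Hessian of $\Dcal$. Fixing a lift $\tilde p$ of $p$, the geodesic loop based at $p$ in the class $\g$ lifts to the segment $[\tilde p,\g\tilde p]$, so that $\ell_\g([m]_p)=d_{\tilde m}(\tilde p,\g\tilde p)=\Dcal([m],\tilde p,\g\tilde p)$. The hypothesis that $p$ avoids the $m$-realization of $\l$ gives $\tilde p\notin\tilde\l$, and since $\tilde\l$ is $\rho(\g)$-invariant we also have $\g\tilde p=\rho(\g)\tilde p\notin\tilde\l$; hence $([m],\tilde p,\g\tilde p)$ lies in the smooth locus of $\Dcal$ and the shearing coordinates of $\teich(S,p)$ are valid at $[m]_p$. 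In these coordinates the tangent space splits as $\Hold\oplus T_p\tilde S$, the first summand being the shearing (i.e. $\teich(S)$) directions and the second the directions moving the marked point.

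Next I would assemble $(\hess\ell_\g)_{[m]_p}$ from the three blocks of $(\hess\Dcal)_{([m],\tilde p,\g\tilde p)}$ relative to the splitting $\Hold\oplus T_p\tilde S$. The pure shearing block is the second shearing-derivative of $\ell_\g$; since the formulas of Theorem~\ref{thm:hessian} were derived by holding $\tilde p$ fixed and dragging the endpoint along the shear, they compute exactly this block, namely $\partial^2_{[m]}\Dcal$. Moving the marked point by $w\in T_p\tilde S$ (with $\a=0$) displaces $\tilde p$ by $u=w$ and $\g\tilde p=\rho(\g)\tilde p$ by $v=\rho(\g)_\ast w$; because $z\mapsto(z,\rho(\g)z)$ is the graph of an isometry of $\Hyp$, hence a totally geodesic embedding of $\Hyp$ into $\Hyp\times\Hyp$, the pure fiber block is simply $\partial^2_{(p,q)}\Dcal$ evaluated at $(u,v)=(w,\rho(\g)_\ast w)$, with no second fundamental form correction. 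The mixed block is $\partial_{[m]}\partial_{(p,q)}\Dcal$. Altogether $(\hess\ell_\g)_{[m]_p}(\a,w)=(\hess\Dcal)_{([m],\tilde p,\g\tilde p)}(\a,\,w,\,\rho(\g)_\ast w)$, so that $(\hess\ell_\g)_{[m]_p}$ is the restriction of $(\hess\Dcal)$ to the subspace $W=\{(\a,w,\rho(\g)_\ast w)~;~\a\in\Hold,\ w\in T_p\tilde S\}$.

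It then remains to invoke the corollary on the Hessian of $\Dcal$. Since $\l$ is chosen so that $\g$ crosses every leaf, the projection of $[\tilde p,\g\tilde p]$ meets every leaf of $\l$, the hypothesis of that corollary holds, and $(\hess\Dcal)_{([m],\tilde p,\g\tilde p)}$ is positive-semidefinite with isotropic cone $0_{\Hold}\times\{(u,v)~;~u,v\ \text{tangent to the geodesic}\ (\tilde p,\g\tilde p)\}$. The restriction of a positive-semidefinite form to $W$ is positive-semidefinite, which proves the first assertion. For definiteness, $(\hess\ell_\g)_{[m]_p}$ is degenerate if and only if $W$ meets the isotropic cone nontrivially, i.e. there is $w\neq 0$ with $\a=0$, $w$ tangent to $g:=(\tilde p,\g\tilde p)$ at $\tilde p$, and $\rho(\g)_\ast w$ tangent to $g$ at $\g\tilde p$. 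As $\rho(\g)_\ast$ sends the tangent line of $g$ at $\tilde p$ to that of $\rho(\g)g$ at $\g\tilde p$, this forces $\rho(\g)g=g$, that is $\tilde p$ lies on the axis of $\rho(\g)$; equivalently the geodesic loop $\g$ is smooth at $p$, i.e. a closed $m$-geodesic through $p$. When $\g$ is a systolic loop this is precisely the condition that $\sys_p(m)$ be realized by a closed geodesic through $p$, giving the stated equivalence.

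The main obstacle is the second step: justifying that substituting $q=\rho_\a(\g)p$ introduces no extra curvature contribution, so that $(\hess\ell_\g)_{[m]_p}$ is genuinely the restriction of $(\hess\Dcal)$ to $W$. This rests on the linearity of the shearing coordinates on $\teich(S)$ and the total geodesy of the graph of the isometry $\rho(\g)$, together with the fact that Theorem~\ref{thm:hessian} computes the shearing derivatives by dragging the endpoint. One must also keep track of the requirement that $\l$ be chosen with $\g$ meeting every leaf, for otherwise shearings supported off $[\tilde p,\g\tilde p]$ would lie in the kernel of $\partial^2_{[m]}\Dcal$ and destroy definiteness independently of the closed-geodesic condition.
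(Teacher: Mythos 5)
Your proposal is correct and follows essentially the same route as the paper: both identify $(\hess\ell_\g)_{[m]_p}(\a,w)$ with $(\hess\Dcal)_{([m],\tilde p,\g\tilde p)}(\a,w,\g\cdot w)$, invoke the positive-semidefiniteness of $\hess\Dcal$ and its isotropic cone, and conclude that degeneracy forces $\rho(\g)$ to preserve the geodesic $(\tilde p,\g\tilde p)$, i.e.\ $\tilde p$ lies on the axis of $\rho(\g)$. Your write-up is in fact somewhat more careful than the paper's on two points it leaves implicit: the justification of the restriction identity (no second-fundamental-form correction, via linearity of the shearing coordinates and total geodesy of the graph of $\rho(\g)$) and the requirement that $\g$ meet every leaf of $\l$ so that the corollary on $\hess\Dcal$ applies.
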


\begin{proof}
We have $\ell_\g(m)= d_{\tilde S}(\tilde p,\g\tilde p)$, which gives the equality
$$(\hess \ell_\g)_{[m]_p}(\a,u)= (\hess\Dcal)_{([m],\tilde p,\g\tilde p )}(\a,u,\g\cdot u) $$
So $(\hess \ell_\g)_{[m]_p}(\a,u)=0$ if and only if $\a=0$ and $u,\g\cdot u$ are tangent to the geodesic $(\tilde p, \g(\tilde p))$. Let $f$ be a non elliptic isometry of $\Hyp$, and $x$ be apoint in $\Hyp$. Given $u\in T_p\Hyp$ the isometry $f$ sends the geodesic tangent to $u$ at $x$ to the geodesic tangent to $(\mrm df)_x(u)$ at $f(x)$. So if $u$ and $f\cdot u$ are tangent to $(x,\g(x))$ then this geodesic is glovally preserved by $f$, which implies that $f$ is hyperbolic and $u$ is tangent to its axis.
\end{proof}

\begin{definition}
We say that a point $[m]_p\in\teich(S,p)$ is a \emph{singularity associated to a short geodesic} if there is a closed $m$-geodesic passing through $p$ that realizes $\sys_p(m)$.
\end{definition}

In particular, if a systolic loop of $m$ is passing through $p$, then $[m]_p$ is a singularity associated to a short geodesic. More generally the set of such singularity in a fiber $[m]$ is a $1$-dimensional closed subset.

\begin{itemize}
\item if $p$ belongs to a geodesic realizing $\sys(m)$ then it is a short geodesic singularity,\item a point which realizes a local maximum of $\sys_p$ on a fiber $[m]$ is not a short geodesic siingularity,  in particular any extreme point of $\sys_p$ is not a short geodesic singularity.
\end{itemize}
 
 As a consequence we get:
 
\begin{theorem}
The restriction of $\sys_p:\teich(S,p)\rightarrow\R_+^\ast$ to $\teich(S,p)-$ is a topological Morse function whose critical points of rank $k$ are the eutactic points of rank $k$.   
\end{theorem}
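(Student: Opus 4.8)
The plan is to read the statement as a direct application of Akrout's theorem (Theorem~\ref{thm:akrout}) to the generalized systole $\mu=\sys_p=\inf_{\g\in\pi_1(S,p)}\ell_\g$ over the set $V=\teich(S,p)-\short(S,p)$. The local finiteness of the system $(\ell_\g)_{\g\in\pi_1(S,p)}$ is clear, so $\sys_p$ is a generalized systole in the sense of Bavard. Granting condition $(H)$ on $V$, Akrout's theorem at once yields that $\sys_p|_V$ is a topological Morse function whose critical points of index $k$ are precisely the eutactic points of vectorial rank $k$, which is exactly the assertion. Thus the entire content of the theorem is the verification of $(H)$ on $V$. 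Before that I would record that $V$ is an open submanifold: if $[m_n]_p\to[m]_p$ with every $m_n$ carrying a systolic closed geodesic through $p$, then local finiteness and continuity of the $\ell_\g$ force the same for the limit, so $\short(S,p)$ is closed.

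To verify $(H)$ I fix $q_0=[m]_p\in V$ and set $\Sys(q_0)=\{\g_1,\dots,\g_N\}$, a finite set by local finiteness, none of whose classes is realized by a closed geodesic through $p$ (this is precisely the hypothesis $q_0\notin\short(S,p)$). I would then choose a maximal geodesic lamination $\l$ with $p\notin\l$ such that each $\g_i$ crosses every leaf of $\l$. In the associated shearing coordinates the preceding Lemma applies to each $\g_i$: using the identity $(\hess\ell_{\g_i})_{q_0}(\a,u)=(\hess\Dcal)_{([m],\tilde p,\g_i\tilde p)}(\a,u,\g_i\cdot u)$ and the computation of the isotropic cone of $\hess\Dcal$, the Hessian $(\hess\ell_{\g_i})_{q_0}$ is positive-semidefinite, and it is positive-definite exactly because no $\g_i$ is a closed geodesic through $p$. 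Since positive-definiteness is an open condition and $\Sys(q_0)$ is finite, the $N$ Hessians stay positive-definite on a common neighborhood $U$ of $q_0$; the Euclidean metric of the shearing chart on $U$ then provides the data demanded by $(H)$. Carrying this out at every $q_0\in V$ gives $(H)$ on $V$, and Akrout's theorem concludes.

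The hard part will be producing, at each $q_0$, a single lamination controlling all the systolic loops simultaneously. The Lemma is phrased for one class $\g$ and one adapted lamination, whereas $(H)$ requires one Riemannian metric — hence essentially one lamination $\l$ — making $\hess\ell_{\g_1},\dots,\hess\ell_{\g_N}$ positive-definite at once; the danger is a nonzero shearing direction lying in $\bigcap_i\ker(\hess\ell_{\g_i})$, i.e.\ a leaf of $\l$ crossed by no $\g_i$. The real work is therefore to build a maximal lamination avoiding $p$, every leaf of which is crossed by each systolic loop, and here the geometry of the minimal-length loops (they are pairwise disjoint off $p$, and at the relevant points they cut the surface into polygons) must be exploited. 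By contrast, the remaining, tangential, degeneracy of the Hessians — the one inherited from the positive-semidefiniteness of the Hessian of $d_\Hyp$ — is disposed of for free by the removal of $\short(S,p)$.
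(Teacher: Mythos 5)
Your proposal follows exactly the paper's route: the theorem is presented there as an immediate consequence of Akrout's theorem once condition $(H)$ is verified on $\teich(S,p)-\short(S,p)$, and that verification rests on the same Lemma identifying the isotropic cone of $(\hess\ell_\g)_{[m]_p}$ through the relation $(\hess\ell_\g)_{[m]_p}(\a,u)=(\hess\Dcal)_{([m],\tilde p,\g\tilde p)}(\a,u,\g\cdot u)$, so that removing $\short(S,p)$ kills precisely the residual degeneracy coming from $d_\Hyp$. The point you flag as ``the real work'' --- one maximal lamination avoiding $p$ whose every leaf is crossed by each systolic loop --- is left just as implicit in the paper (which fixes a single $\l$ adapted to a single $\g$ and then passes directly to ``as a consequence we get''); it is fillable, e.g.\ by taking $\l$ to contain a minimal filling sublamination disjoint from $p$, since every essential loop must then cross every leaf.
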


In particular, we find that the extreme points of $\sys_p$ are characterized

\begin{corollary}
A point $[m]_p\in\teich(S,p)$ is extreme if and only if the systolic loops at $p$ divide $(S,m)$ into isometric equilateral triangles whose sides have length $\sys_p(m)$.
\end{corollary}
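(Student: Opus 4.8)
The plan is to translate the statement into the variational dictionary furnished by Theorem~\ref{thm:voronoi} and Theorem~\ref{thm:akrout}; in fact the corollary is just the closed-surface case ($\partial S=\emptyset$) of the extreme-point theorem already proved above, so the quickest route is to specialise that statement, but I describe the self-contained argument. An extreme point $[m]_p$ is by definition a local maximum of $\sys_p$, hence a critical point of maximal index; in particular it is not a singularity associated to a short geodesic, since a local maximum on a fibre of $\pi_p$ cannot sit on a short closed geodesic through $p$. By the preceding lemma the Hessians of the length functions $\ell_\g$ of the systolic loops are then positive-definite near $[m]_p$, so condition $(H)$ holds and Theorem~\ref{thm:voronoi} applies: $[m]_p$ is extreme if and only if the family $(\mrm d\ell_\g)_{\g\in\Sys([m]_p)}$ is perfect and eutactic in $T^\ast_{[m]_p}\teich(S,p)$.

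For the forward implication I would repeat the argument used for $\sys_B$. Perfection together with Corollary~\ref{cor:voronoi} forces at least $\dim\teich(S,p)+1=-3\chi(S)+3$ systolic loops at $p$. A minimality argument shows that two systolic loops can meet only at $p$, since a transverse interior crossing would let one assemble a strictly shorter essential loop based at $p$. An Euler-characteristic count then shows that $-3\chi(S)+3$ is exactly the maximal number of pairwise non-homotopic loops based at $p$ and otherwise disjoint, and that any such maximal family cuts the closed surface into triangles --- no monogons occur because $\partial S=\emptyset$. Hence the systolic loops triangulate $(S,m)$; every triangle has its three sides among the systolic loops, all of length $\sys_p(m)$, so each triangle is equilateral of side $\sys_p(m)$ and all of them are mutually isometric.

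For the converse I would verify that an equilateral triangulation yields a perfect and eutactic point. Such a configuration already realizes the maximal number $e=-3\chi(S)+3$ of loops, and a direct estimate inside each equilateral triangle (computing, as in the polygon lemmas, the distance between non-adjacent sides) confirms that these loops are the shortest essential loops based at $p$, so they constitute $\Sys([m]_p)$. The crucial point is eutaxy. Since every corner angle at the single vertex $p$ equals $\pi/e$, the $2e$ edge-germs emanate from $p$ in equally spaced directions. By the first-variation formula $\partial_{(p,q)}\Dcal(u,v)=\|u\|\cos\psi_u+\|v\|\cos\psi_v$ of Theorem~\ref{thm:hessian}, the restriction of each $\mrm d\ell_\g$ to the push directions $T_pS$ is governed by the two edge-germs of $\g$, so summing over all loops amounts to summing $\cos$ over the $2e$ equally spaced directions, which vanishes; thus $\sum_\g\mrm d\ell_\g=0$ on $T_pS$. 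Together with the analogous vanishing in the shearing directions (the delicate point, addressed below), this exhibits the origin of $T^\ast_{[m]_p}\teich(S,p)$ as the barycentre, with equal positive weights, of the $\mrm d\ell_\g$. As there are exactly $\dim\teich(S,p)+1$ of them, their convex hull is a simplex containing the origin in its interior, which is simultaneously eutaxy and perfection; Theorem~\ref{thm:voronoi} then yields that $[m]_p$ is strictly extreme.

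The step I expect to be the main obstacle is precisely this eutaxy in the converse: one must establish $\sum_\g\mrm d\ell_\g=0$ not only on the two push directions, where the equal spacing of the edge-germs makes the cancellation transparent, but on the whole cotangent space, including the shearing directions of $\teich(S)$. Making the cancellation precise there requires combining the integral formula $\partial_{[m]}\Dcal(\a)=\int_{[p,q]}\cos\theta_l\,\mrm d\a(l)$ of Theorem~\ref{thm:hessian} with the local symmetry of the equilateral picture around $p$. When the triangulation carries a genuine finite group of isometries fixing $[m]_p$ as an isolated point, Bavard's fixed-point lemma delivers eutaxy immediately and bypasses the computation; in general one must fall back on the explicit first-variation formulas.
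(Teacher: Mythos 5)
Your forward implication is sound and is essentially the paper's own argument: an extreme point is perfect, hence by Corollary~\ref{cor:voronoi} carries at least $\dim\teich(S,p)+1=-3\chi(S)+3$ systolic loops at $p$; minimality forces them to meet only at $p$; and the Euler-characteristic count shows this is the maximal cardinality of such a family and that it triangulates the closed surface, so every triangle is equilateral of side $\sys_p(m)$. The reduction to perfection and eutaxy via Theorems~\ref{thm:voronoi} and~\ref{thm:akrout}, after excluding short-geodesic singularities at a local maximum, is also exactly the paper's setting.

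The converse, however, is left genuinely open in your write-up, and you say so yourself: you establish the cancellation $\sum_\g \mrm d\ell_\g=0$ only on the two ``push'' directions of $T_pS$ (via the equally spaced edge-germs and the first-variation formula), and you defer the shearing directions to either an unspecified symmetry argument or ``explicit first-variation formulas.'' Bavard's fixed-point lemma does not rescue you in general, since a generic equilateral triangulation need not admit an isometry group fixing $[m]_p$ as an isolated point. The device you are missing is the one the paper uses in the section on critical points of $\sys_p$: for a \emph{regular} polygon the computation with the $U_i^\perp+V_i^\perp=-\tan(\theta_i/2)(U_i+V_i)$ identity yields
$$\sum_i (\mrm d\theta_i)_x \;=\; -\coth\Bigl(\frac{\ell_1}{2}\Bigr)\sum_i \tan\Bigl(\frac{\theta_i}{2}\Bigr)(\mrm d\ell_i)_x$$
as an identity of $1$-forms on the full space of polygons, not merely on the push directions. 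Summing this over all the equilateral triangles of the decomposition, the left-hand side becomes the differential of the total angle at $p$, which is identically $2\pi$ on $\teich(S,p)$ and therefore has vanishing differential in \emph{every} direction, shearing directions included. This exhibits the origin as a strictly positive combination of the $\mrm d\ell_\g$ (each loop contributing two sides), which is eutaxy; perfection then follows from the count of $\dim\teich(S,p)+1$ loops together with the linear independence of the side-length differentials. Without this angle-sum relation your converse does not close.
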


\begin{proof}
As we have seen, an extreme point $[m]_p$ has at least $\dim\teich(S,p)+1$ systolic loops at $p$. An easy computation shows that this is the maximal cardinality of a set of geodesic loops based at $p$, and that such a set divide the surface into equilateral triangles and monogons whose sides have length $\sys_p(m)$
\end{proof}

%%%%%%%%%%%%%%%%%%%%%%%%%%%%%%%%%%%%%%%%%
\section{Critical points of $\sys_p$}
%%%%%%%%%%%%%%%%%%%%%%%%%%%%%%%%%%%%%%%%%

\subsection{Three kinds of polygons}
We fix a hyperbolic metric $m$ on $S$. The lifts of the systolic loops at $p$ form a family of geodesics that divide $\tilde S$ into \emph{convex} $\tilde m$-polygons. These polygons fall into a finite number of $\pi_1(S,p)$-orbits, and they are of three kinds: 
\begin{itemize} 
\item \emph{compact polygons}, the interior of such a polygon projects isometrically onto a component of $(S,m)-\Sys(m)$ that do not contain a boundary component nor a cusp.
\item \emph{polygons that contain a boundary component of $\partial \tilde S$}, they have an infinite number of sides, but are invariant under the action of the cyclic subgroup of $\pi_1(S,p)$ preserving the boundary component. Such a polygon projects onto a component of $(S,m)-\Sys(m)$ that contains a boundary component.
\item \emph{polygons that contain an horoball}, they have an infinite number of sides, but are invariant under the action of the cyclic subgroup of $\pi_1(S,p)$ preserving the horoball. Such a polygon projects onto a component of $(S,m)-\Sys(m)$ that contains a cusp.
\end{itemize}

\subsection{Differentials of lengths and angles}%%%%%%%%%%%%%%%%
Let $x=(x_1,\ldots ,x_l)\in\Hyp^l-\D_l$ be a polygon. For each $i$, we denote by $\ell_i$ the length of the segment $[x_{i}, x_{i+1}]$, by $U_i$ (resp. $V_i$) the unit tangent vector at $x_i$ pointing in the direction opposite to $x_{i-1}$ (resp. $x_i$), and by
$\theta_i$ the oriented angle $\widehat{x_{i-1}x_i x_{i+1}}$, which is also the oriented angle from $U_i$ to $V_i$.\par
 From the expression of the differential of $d_\Hyp$, we find for any $u_i\in T_{x_i}$:
\begin{eqnarray*}
(\mrm d\ell_{i-1})_x(u_i) & = &  \langle u_i,U_i \rangle,\\
(\mrm d\ell_{i})_x(u_i) & = &  \langle u_i,V_i \rangle.
\end{eqnarray*}
In particular for $u=(u_1,\ldots, u_l)\in T_x\Hyp$ we have
\begin{eqnarray*}
(\mrm d\ell_{i})_x(u) & = &  \langle u_i,V_i \rangle+ \langle u_{i+1},U_{i+1}\rangle, \\
\sum_{i=1,\ldots,l} (\mrm d\ell_i)_x(u) & = & \sum_{i=1,\ldots,l}  \langle u_i,U_i+V_i \rangle,
\end{eqnarray*}
and we easily observe that:

\begin{lemma}
If each $\theta_i$ is different from $\pi$ and $0$, then the differentials $(\mrm d\ell_1)_x,\ldots,(\mrm d\ell_l)_x  $ are linearly independent in $T^\ast_x\Hyp^l$. In particular, the set of length-regular polygons with angles different from $0$ and $\pi$ is a submanifold $V(l)$ of dimension $l+1$ of $\Hyp^l$.
\end{lemma}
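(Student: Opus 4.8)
The plan is to read the statement off directly from the differential formulas $(\mrm d\ell_i)_x(u)=\langle u_i,V_i\rangle+\langle u_{i+1},U_{i+1}\rangle$ just established, by exploiting the block decomposition $T^\ast_x\Hyp^l=\bigoplus_{j=1}^l T^\ast_{x_j}\Hyp$. In this decomposition each covector $(\mrm d\ell_i)_x$ is supported on exactly two slots: on $T^\ast_{x_i}\Hyp$ it is $u_i\mapsto\langle u_i,V_i\rangle$, on $T^\ast_{x_{i+1}}\Hyp$ it is $u_{i+1}\mapsto\langle u_{i+1},U_{i+1}\rangle$, and it vanishes on the remaining slots (indices read mod $l$). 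Consequently any linear dependence among the $(\mrm d\ell_i)_x$ decouples completely across the slots, and the whole first assertion reduces to a two-dimensional computation in each tangent plane $T_{x_j}\Hyp$.

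Concretely, I would suppose $\sum_{i=1}^l c_i\,(\mrm d\ell_i)_x=0$ and isolate the component in the $j$-th slot. This slot receives $u_j\mapsto c_j\langle u_j,V_j\rangle$ from $(\mrm d\ell_j)_x$ and $u_j\mapsto c_{j-1}\langle u_j,U_j\rangle$ from $(\mrm d\ell_{j-1})_x$, so its vanishing means $\langle u_j,\,c_jV_j+c_{j-1}U_j\rangle=0$ for all $u_j$, i.e. $c_jV_j+c_{j-1}U_j=0$ in $T_{x_j}\Hyp$. Now $U_j$ and $V_j$ are unit vectors making the angle $\theta_j$, and by hypothesis $\theta_j\neq 0,\pi$, so they are linearly independent in the two-dimensional space $T_{x_j}\Hyp$. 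Hence $c_j=c_{j-1}=0$ for every $j$, all coefficients vanish, and $(\mrm d\ell_1)_x,\ldots,(\mrm d\ell_l)_x$ are linearly independent.

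For the second assertion, let $W\subset\Hyp^l-\D_l$ be the set of configurations all of whose angles differ from $0$ and $\pi$; since the $\theta_j$ vary continuously on the locus of pairwise distinct points, $W$ is open. The linear independence just proved says precisely that $(\ell_1,\ldots,\ell_l)\colon W\to\R^l$ is a submersion, so a fortiori the $l-1$ differentials $\mrm d(\ell_1-\ell_2),\ldots,\mrm d(\ell_{l-1}-\ell_l)$ are everywhere independent on $W$. The length-regular polygons are cut out inside $W$ by the equalities $\ell_1=\cdots=\ell_l$, that is, as the zero locus of the submersion $(\ell_1-\ell_2,\ldots,\ell_{l-1}-\ell_l)$. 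By the regular value theorem this zero locus is a submanifold of codimension $l-1$ in $\Hyp^l$, hence of dimension $2l-(l-1)=l+1$, which is the claimed $\V(l)$.

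The argument is essentially bookkeeping once the differential formulas are available; the sole genuine input is the nondegeneracy of the angles, which is exactly what makes $U_j$ and $V_j$ independent inside each two-dimensional tangent plane. The only points demanding care are the cyclic indexing mod $l$ and the verification that the two contributions at the vertex $x_j$ really come from the consecutive length functions $\ell_{j-1}$ and $\ell_j$.
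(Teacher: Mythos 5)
Your proof is correct and is exactly the argument the paper intends: the paper presents the lemma as an immediate observation from the displayed formula $(\mrm d\ell_i)_x(u)=\langle u_i,V_i\rangle+\langle u_{i+1},U_{i+1}\rangle$, and your slot-by-slot decoupling (giving $c_jV_j+c_{j-1}U_j=0$ at each vertex, whence $c_j=c_{j-1}=0$ since $\theta_j\neq 0,\pi$) together with the regular value theorem for the codimension-$(l-1)$ locus $\ell_1=\cdots=\ell_l$ is the natural way to spell it out. No gaps.
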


From Lemma~\ref{lem:angle} we deduce 
\begin{eqnarray*}
(\mrm d\theta_{i-1})_x(u_i) & = & \frac{\langle u_i, U_i^\perp \rangle}{\sinh\ell_{i-1}}, \\
(\mrm d\theta_{i+1})_x(u_i) & = & \frac{\langle u_i, V_i^\perp \rangle}{\sinh\ell_{i}}, 
\end{eqnarray*}
where $U_i^\perp$ (resp. $V_i^\perp$) is obtained by rotating $U_i$ (resp. $V_i$) by an angle $\pi/2$ (resp. $-\pi/2$). The formula at the end of \textsection\ref{sec:derivative-alpha} gives
\begin{eqnarray*}
(\mrm d \theta_i)_x(u_i) & = & \coth\ell_{i-1} \langle u_i, U_i^\perp \rangle + \coth\ell_{i}  \langle u_i, V_i^\perp \rangle,
\end{eqnarray*}
so that
\begin{eqnarray*}
(\mrm d \theta_i)_x(u) & = & \frac{\langle u_{i-1}, V_{i-1}^\perp \rangle}{\sinh\ell_{i-1}}+ \coth\ell_{i-1} \langle u_i, U_i^\perp \rangle + \coth\ell_{i}  \langle u_i, V_i^\perp \rangle + \frac{\langle u_{i+1}, U_{i+1}^\perp \rangle}{\sinh\ell_{i}}.
\end{eqnarray*}
In particular
\begin{eqnarray*}
\sum_{i=1,\ldots,l} (\mrm d\theta_i)_x(u) & = & \sum_{i=1,\ldots,l} \frac{1+\cosh\ell_{i-1}}{\sinh\ell_{i-1}}   \langle u_i,U_i^\perp \rangle + \frac{1+\cosh\ell_{i}}{\sinh\ell_{i}} \langle u_i,V_i^\perp \rangle  \\
& = & \sum_{i=1,\ldots,l} \coth\frac{\ell_{i-1}}{2}   \langle u_i,U_i^\perp \rangle +  \coth\frac{\ell_{i}}{2}  \langle u_i,V_i^\perp \rangle.
\end{eqnarray*}
If $x$ is length-regular, then we find that 
\begin{eqnarray*}
\sum_{i=1,\ldots,l} (\mrm d\theta_i)_x(u) & = & \coth\frac{\ell_{1}}{2} \sum_{i=1,\ldots,l}   \langle u_i,U_i^\perp+V_i^\perp \rangle \\
& = & -\coth\frac{\ell_{1}}{2}  \sum_{i=1,\ldots,l} \tan(\theta_i/2) (\mrm d\ell_i)_x(u)
\end{eqnarray*}
because $U_i^\perp+V_i^\perp=-\tan(\theta_i/2)(U_i+V_i)$.

\subsection{Spaces of polygons}%%%%%%%%%%%%%

Given $l\geq 3$, the space of marked polygons of $\Hyp$ with $l$ vertices is $\Hyp^l-\D_l$. We denote by $\poly(l)$ the space of isometry classes of marked polygons of $\Hyp$ with $l$ vertices, which is the quotient of $\Hyp^l-\D_l$ by the diagonal action of $\Isom^+(\Hyp)$. This is a manifold of dimension $l-3$.\par
 Similarly there is a space $\poly(l,b)$ of isometry classes of marked polygons with $l$ vertices and one hold with boundary length $b$. This is the of points on $\{z\in\Hyp;x<0\}^l/\Z$

 Let $\{\g_1,\ldots ,\g_k\}\subset \pi_1(S,p)$ be homotopy classes of simple loops intersecting only at $p$ that divide $S$ in polygons $P_1,\ldots,P_m$, eventually with on hole or cusp. This defines a smooth map from $\Phi:\teich(S,p)\rightarrow \poly_1\times\ldots\times \poly_m$, where $\poly_i$ is the space of isometry classes of marked polygons corresponding to $P_i$.
 
The image of $\Phi$ is the submanifold of codimension $1+k$ defined by the conditions: the total sum of the angles is equal to $2\pi$, two identified sides have same length.
As the map has an obvious smooth inverse, we conclude that
 
\begin{proposition}
The map $\Phi$ is a diffeomorphism onto its image
\end{proposition}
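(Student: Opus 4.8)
The plan is to exhibit an explicit smooth inverse of $\Phi$ and to identify its image with the set $W$ cut out by the gluing conditions described above (equal lengths of identified sides, and total angle sum $2\pi$), so that $\Phi$ becomes a smooth bijection onto a submanifold of codimension $1+k$ admitting a smooth inverse, hence a diffeomorphism onto its image. That $\Phi$ is smooth is immediate from its construction: realizing the loops $\g_1,\ldots,\g_k$ by geodesic loops and cutting $(S,m)$ along them yields the polygons $P_i(m)$, whose side lengths and vertex angles vary smoothly with $[m]_p\in\teich(S,p)$, and $\Phi([m]_p)$ is by definition the tuple of their isometry classes in $\poly_1\times\cdots\times\poly_m$.

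First I would construct the inverse by gluing. The combinatorial gluing pattern --- which side of which polygon is identified with which, and the cyclic order in which the corners meet at $p$ --- is fixed once and for all by the choice of $\{\g_1,\ldots,\g_k\}$ and does not depend on the metric. Given a tuple $(Q_1,\ldots,Q_m)$ lying in $W$, I glue the $Q_i$ along identified sides by the unique isometries matching their endpoints; this is possible precisely because identified sides have equal length, and the glued object carries a hyperbolic metric away from the corners. The condition $\sum_i\theta_i=2\pi$ ensures that the total angle around the single point created by the corners is $2\pi$, so the result is a genuine smooth hyperbolic surface with a marked point, the boundary components and cusps being supplied by those $Q_i$ that carry a hole or a horoball. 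Since the gluing pattern reproduces $S$ and records the marked point, this defines a class in $\teich(S,p)$ depending smoothly on the polygon parameters, and it is inverse to $\Phi$; in particular $\Phi$ is injective and its image is exactly $W$.

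It remains to verify that $W$ is indeed a submanifold of codimension $1+k$, and this is the step I expect to be the main obstacle. I would show that the map whose components are the $k$ differences of identified side lengths together with $\sum_i\theta_i-2\pi$ is a submersion, i.e.\ that the differentials of these $1+k$ functions are linearly independent along $W$. For the length functions this follows from the formula $(\mrm d\ell_i)_x(u_i)=\langle u_i,V_i\rangle$ together with the length-regularity lemma, which already guarantees independence of side-length differentials when the vertex angles avoid $0$ and $\pi$. The subtle point is that the angle constraint is independent of the length constraints; here the computation of $\sum_i(\mrm d\theta_i)_x$ as a combination of the $\langle u_i,U_i^\perp\rangle$ and $\langle u_i,V_i^\perp\rangle$ is decisive, since these pair with directions orthogonal to those controlling the lengths. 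Once $W$ is known to be a codimension-$1+k$ submanifold, the smooth mutually inverse maps between $\teich(S,p)$ and $W$ make $\Phi$ a diffeomorphism onto its image, and a comparison with $\dim\teich(S,p)$ confirms the stated codimension.
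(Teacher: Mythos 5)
Your proposal is correct and follows essentially the same route as the paper, which simply asserts that the image is the submanifold cut out by the angle-sum and side-identification conditions and that the gluing construction provides an ``obvious smooth inverse.'' You are in fact more careful than the paper on the one point it leaves implicit, namely that the $1+k$ defining functions have independent differentials so that the image really is a submanifold of the stated codimension.
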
 

Let us denote by $F$ the number of components of $S-\cup_i \g_i$. We have $\chi(S)=1-k+F$, so $2-2g-(b+s)=1-k+F$.
Note the length and angle functions are invariant under $\Isom(\Hyp)$, so they are well 
Note that the differentials of the length functions of sides are linearly independent at any point of $\poly_1\times \poly_m$, so the length the differential of the length functions of identified sides are linearly independent, this implies that the rank on the image of $\Phi$ is at least $k-1$, more precisely each family of $k-1$ is free.

\subsection{Proof of the theorem}%%%%%%%%%%%%%

\begin{lemma}
If the systolic $m$-geodesic loops at $p$ divide $(S,m)$ in regular polygons, then $[m]_p$ is a critical point of $\sys_p$ whose rank is equal to the cardinal of $\Sys_p(m)$ minus $1$.
\end{lemma}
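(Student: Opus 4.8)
The plan is to deduce the statement from Akrout's theorem (Theorem~\ref{thm:akrout}): under condition $(H)$ the critical points of $\sys_p$ are exactly its eutactic points, and the rank of such a point is the vectorial rank of the differentials $(\mrm d\ell_\g)_{\g\in\Sys_p(m)}$. Here condition $(H)$ holds near $[m]_p$ by the Hessian lemma above, since a genuine regular-polygon decomposition is not a short-geodesic singularity. So the content is to prove two things: that the origin of $T^\ast_{[m]_p}\teich(S,p)$ lies in the affine interior of the convex hull of the $(\mrm d\ell_\g)_{\g\in\Sys_p(m)}$, and that these differentials span a subspace of dimension exactly $|\Sys_p(m)|-1$.

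I would run the computation inside the embedding $\Phi:\teich(S,p)\to\poly_1\times\ldots\times\poly_m$, under which the systolic loops are the pairwise-identified sides of the polygons $P_1,\ldots,P_m$, each $\mrm d\ell_\g$ is the differential of the corresponding side-length function, and $T_{[m]_p}\teich(S,p)$ is the subspace on which the total angle at $p$ stays constant and identified sides keep equal length. Since each $P_i$ is regular, with common side length $L=\sys_p(m)$ and common angle $\theta^{(i)}$, the identity derived above for length-regular polygons specializes to
$$\sum_j \mrm d\theta^{(i)}_j~=~-c_i\sum_j\mrm d\ell^{(i)}_j,\qquad c_i=\coth\frac{L}{2}\,\tan\frac{\theta^{(i)}}{2}>0,$$
the sums running over the corners, respectively the sides, of $P_i$. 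Summing over all polygons, the left-hand side becomes the differential of the total angle at $p$, which vanishes on $T_{[m]_p}\teich(S,p)$ because that angle is identically $2\pi$. Regrouping the right-hand side by systolic loops, each $\mrm d\ell_\g$ collects the constants of the (at most two) polygons flanking $\g$, giving a coefficient $w_\g>0$, so
$$\sum_{\g\in\Sys_p(m)} w_\g\,\mrm d\ell_\g~=~0\quad\text{on }T_{[m]_p}\teich(S,p),\qquad w_\g>0.$$
After normalization the origin is a convex combination of the $\mrm d\ell_\g$ with strictly positive weights, hence it lies in the affine interior of their convex hull, and $[m]_p$ is eutactic.

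For the rank, this same relation is a nontrivial linear dependence among the $|\Sys_p(m)|$ differentials, so their vectorial rank is at most $|\Sys_p(m)|-1$. The reverse inequality is the freeness property recorded after the proposition identifying $\Phi$ with its image: any $|\Sys_p(m)|-1$ of the side-length differentials are linearly independent on $\Phi(\teich(S,p))$. Hence the rank equals $|\Sys_p(m)|-1$, which by Akrout's theorem is the rank of the critical point, as claimed.

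The step I expect to be the main obstacle is the treatment of the non-compact polygons, those containing a boundary component or a horoball, since the length-regular identity was established only for compact polygons with finitely many sides. There one restricts to a fundamental domain of the cyclic stabilizer; the boundary-length differentials vanish on $T\teich(S,p)$ because those lengths are fixed, and a cusp contributes no angle at $p$, so I expect the same relation to persist. Verifying this, together with the positivity and the side-counting behind each weight $w_\g$ when a single polygon flanks $\g$ on both sides, is the delicate part of the argument.
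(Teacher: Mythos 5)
Your proposal follows essentially the same route as the paper: it specializes the length-regular identity $\sum_j \mrm d\theta_j = -\coth(\ell_1/2)\sum_j\tan(\theta_j/2)\,\mrm d\ell_j$ to regular polygons, sums over the polygons of the decomposition, uses the constancy of the total angle $2\pi$ at $p$ to produce a vanishing positive combination of the $\mrm d\ell_\g$ (hence eutaxy), and gets the rank from that single relation together with the freeness of any $|\Sys_p(m)|-1$ of the side-length differentials. Your write-up is in fact more careful than the paper's, notably in flagging the treatment of the non-compact polygons containing a boundary component or horoball, which the paper's proof does not address explicitly.
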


\begin{proof}
If the systolic $m$-geodesic loops at $p$ divide $(S,m)$ in regular polygons. We denote $P_1,\ldots ,P_m$ the polygons, note that the isometry class of each marked polygon is well-defined for each point of $\teich(S,p)$. For each polygon the differential of the total sum of the angles is equal to a negative scalar times the differential of the sum of the length. But the total angle at $p$ is constant equal to $2\pi$, this shows that $[m]_p$ is eutactic. The assertion on the rank is obvious from the discussion of the previous paragraph.\par
\end{proof}

\begin{lemma}
Let $\Sys\subset \pi_1(S,p)$ be a family of homotopy classes of simple loops that divide $S$ into polygons. Then the minimal class of $S$ contains exactly one eutactic point.
\end{lemma}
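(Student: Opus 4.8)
The plan is to establish the two halves of the statement separately: existence of an eutactic point in the minimal class of $\Sys$, and uniqueness. Uniqueness is the more formal part, since the variational machinery of Section~2 reduces it to a convexity statement, whereas existence requires producing the candidate metric by hand.

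For uniqueness I would invoke Lemma~\ref{lem:eutactic-unicity}: it suffices to join any two points of the minimal class of $\Sys$ by a smooth curve along which every $\ell_\g$, $\g\in\Sys$, is strictly convex. I fix a maximal geodesic lamination $\l$ transverse to all $\g\in\Sys$ and avoided by $p$, and work in the associated shearing coordinates, under which $\teich(S,p)\cong\teich(S)\times(\tilde S-\tilde\l)$. By the Hessian lemma of the preceding section each $\ell_\g$ is convex in these coordinates, its Hessian being positive-semidefinite with isotropic cone contained in $\{0_\Hold\}$ times the line of $T_p\tilde S$ tangent to a closed systolic $m$-geodesic through $p$; in particular the degeneracy locus of each $\ell_\g$ has codimension at least one. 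A generic smooth curve joining the two points therefore has velocity avoiding every isotropic cone except at isolated parameters, so each $\ell_\g$ has nonnegative second derivative along it vanishing only at finitely many points, hence is genuinely strictly convex. Lemma~\ref{lem:eutactic-unicity} then yields at most one eutactic point in the minimal class of $\Sys$.

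For existence I would realize the candidate explicitly, in the spirit of the construction carried out for $\sys_B$ in Part~2. The system $\Sys$ prescribes the combinatorics of a decomposition of $S$ into marked polygons $P_1,\dots,P_m$; imposing that each $P_i$ be regular with all sides of a common length $x$ forces the angle of each polygon at the vertex $p$ to be a fixed function of $x$, and the requirement that $p$ be a smooth point of $S$ becomes the single transcendental equation stating that the total angle at $p$ equals $2\pi$ (the equation of the Extreme points theorem being the special case of equilateral triangles and monogons). A monotonicity argument in $x$, as in the existence lemma of Part~2, shows this equation has a unique solution and hence produces a metric $[m]_p$ whose $\Sys$-loops decompose $(S,m)$ into regular polygons. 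The trigonometric minimality estimates—the analogue of the lemma ``the components of $\a$ are the arcs of minimal length''—show that every other geodesic loop at $p$ must cross a polygon between non-adjacent sides and is therefore strictly longer, so the loops of $\Sys$ are exactly the systolic loops at $p$ and $[m]_p$ lies in the minimal class of $\Sys$. By the preceding lemma this point is eutactic.

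The main obstacle is the uniqueness step. Unlike the case of arcs, where Theorem~\ref{thm:shearing} supplies positive-\emph{definite} Hessians outright and Lemma~\ref{lem:eutactic-unicity} applies verbatim, here the Hessian of $\ell_\g$ is only positive-semidefinite, the single degenerate direction being precisely that in which $p$ slides along a closed systolic geodesic. Ruling out that this degeneracy persists along the connecting curve—which is where the hypothesis that $\Sys$ fills $S$ by dividing it into polygons is genuinely used—is the delicate point, and the genericity of the curve is what makes the argument go through cleanly.
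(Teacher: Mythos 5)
Your existence half is sound and runs parallel to the paper: the regular\-/polygon metric is pinned down by monotonicity of the total angle at $p$ in the common side length, the trigonometric estimates show the loops of $\Sys$ are exactly the systolic loops, and eutaxy then follows from the preceding lemma (the differential of the total angle at $p$, constant equal to $2\pi$, is a negative multiple of the sum of the side-length differentials). The divergence is in the uniqueness half, and there your argument has a genuine gap. The paper deliberately does \emph{not} run the convexity argument of Lemma~\ref{lem:eutactic-unicity} for $\sys_p$ --- that route is reserved for the arc systole, where Theorem~\ref{thm:shearing} gives positive-\emph{definite} Hessians. For loops it argues through Lemma~\ref{lem:eutactic-minimum} instead: an eutactic point is a strict local minimum of $\sys_p$ in its minimal class; the systolic loops cut $(S,m)$ into polygons that are automatically length-regular, and if one of them fails to be regular (equal angles), an area-preserving deformation of that single polygon into a length-regular one with strictly shorter sides (Schlenker) produces points of the same minimal class, arbitrarily close, with strictly smaller $\sys_p$ --- a contradiction. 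Hence every eutactic point is the regular-polygon point, which is unique.

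Your alternative requires, for \emph{every} pair of points of the minimal class, one smooth curve along which each $\ell_\g$, $\g\in\Sys$, is strictly convex, and two steps do not hold up. First, the shearing coordinates on $\teich(S,p)$ are only defined on the open subset where $p$ avoids the realization of $\l$; its complement has codimension one, and for no single choice of $\l$ can you guarantee that two arbitrarily prescribed points of the minimal class are joined inside one chart (the two positions of $p$ would have to lie in the same complementary ideal triangle). Strict convexity of $t\mapsto\ell_\g(c(t))$ does not survive patching charts: a curve affine in one chart is not affine in the next, and convexity on consecutive pieces does not concatenate unless the one-sided derivatives match monotonically at the junctions. Second, ``generic curve'' does not deliver strict convexity: for a non-affine curve one has $\tfrac{\mrm d^2}{\mrm dt^2}(\ell_\g\circ c)=\hess\ell_\g(c',c')+\mrm d\ell_\g(c'')$, and the first-order term has no sign; for the affine segment there is nothing generic to perturb, since its velocity is dictated by the two given eutactic points --- and precisely when these share the same image in $\teich(S)$ the velocity lies entirely in the $p$-directions, where the isotropic cone of $\hess\ell_\g$ sits. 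Note finally that the hypothesis you correctly identify as essential --- that $\Sys$ divides $S$ into polygons --- is never actually used in your uniqueness argument; in the paper it is exactly what makes the polygon deformation available.
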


\begin{proof}
We have just proved the existence, so let us show the unicity. Let $[m]_p$ be an eutactic point of $\sys_p$. The $m$-systolic loops at $p$ divide $(S,m)$ into length-regular polygons $P_1,\ldots, P_k$ with angles less than $\pi$. We assume that $[m]_p$ is not one of the critical points of the previous lemma. We will show that $[m]_p$ does not realize strict local minimum of $\sys_p$ in its minimal class. This will contradict the Proposition~1.6 of \cite{bavard-smf}, and establish the lemma.\par

 As $[m]_p$ is not one of the critical points above, there is at least one of the $P_i$'s, let say $P_1$, which is not regular. Then we can deform it into a length-regular polygon $P'_1$ with same area (\cite[Lemma~4.2]{schlenker}). Note that we can choose $P'_1$ as close as we want from $P_1$.\par
 
  By gluing the polygons $P'_1,P_2,\ldots,P_k$ we determine a new point $[m']_p\in\teich(S,p)$. The points $[m]_p$ and $[m']_p$ belong to the same minimal class and satisfy $\sys(m')< \sys_p(m)$. We can construct $m'$ as close as we want from $[m]_p$, this show that $[m]_p$ is not a strict minimum of $\sys_p$ in its minimal class.
\end{proof}

\subsection{Degenerate points}%%%%%%%%%%%%%

\begin{lemma}
Any point in $\short(S,p)$ is degenerate. 
\end{lemma}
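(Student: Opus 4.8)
The plan is to exploit the semidefiniteness established in the lemma just proved. Fix $[m]_p\in\short(S,p)$ together with a systolic loop $\g$ that is a closed $m$-geodesic through $p$, so $\ell_\g(m)=\sysp(m)$, and choose a maximal lamination $\l$ transverse to $\g$ with $p\notin\l$ so that the preceding lemma applies. It gives that $(\hess\ell_\g)_{[m]_p}$ is positive-semidefinite with a one-dimensional isotropic cone, spanned by the vector $w$ that slides the basepoint along the axis of $\g$ (i.e.\ along the closed geodesic) while keeping the point of $\teich(S)$ fixed. I would use this isotropic direction to produce a positive-dimensional piece of $\short(S,p)$ through $[m]_p$ along which $\sysp$ is locally constant, and then appeal to the fact that a nondegenerate critical point of a topological Morse function is isolated in the critical locus.

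First I would set up the slide curve $c:(-\e,\e)\to\teich(S,p)$ obtained by moving $p$ along $\g$; this is the integral curve of $w$ and lies in the fibre of $\pi_p$ through $[m]$. Since sliding the basepoint along a closed geodesic does not alter that geodesic, the loop based at the slid point in the corresponding class is again $\g$ and has the same length, so $\ell_\g\circ c\equiv\sysp(m)$ \emph{exactly}; hence $\sysp\circ c\le\sysp(m)$ with equality at $0$. In the basic case $\sysp(m)=\sys(m)$, where $p$ sits on a systole, this already forces equality throughout: one has $\sys(m)\le\sysp(c(t))\le\ell_\g(c(t))=\sys(m)$, so the entire closed geodesic lies in $\short(S,p)$ and $\sysp$ is constant along it. In general, local finiteness of the length spectrum and continuity of the finitely many competing length functions keep $\g$ the (unique, up to reversal) systolic class for small $t$, so $c(t)\in\short(S,p)$ and $\sysp(c(t))=\sysp(m)$ on a neighbourhood of $0$.

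Granting this, $\short(S,p)$ contains a nontrivial arc through $[m]_p$ sitting on a single level set of $\sysp$, and at every point of this arc condition $(H)$ fails by the previous lemma, so Akrout's theorem cannot certify nondegeneracy there. The degeneracy I want to extract is that the isotropic direction $w$ of the active constraint $\ell_\g$ is a genuine flat direction of $\sysp$: along $c$ the systolic function is constant to infinite order, while transversally it controls $\sysp$ through its positive-definite Hessian on a complement of $\R w$. Transporting this through the product decomposition $\teich(S,p)\cong\teich(S)\times(\tilde S^n-\D_n)$, I would conclude that the points of the arc are non-isolated critical points of $\sysp$, whence no topological chart can straighten $\sysp-\sysp(m)$ into a nondegenerate quadratic form and $[m]_p$ is degenerate.

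The hard part is precisely the step from ``$\sysp$ constant on an arc of points where $(H)$ fails'' to ``these points are critical and non-isolated in the critical locus'': a regular point may perfectly well have $\sysp$ constant along a curve lying in a level hypersurface, so constancy alone is not enough, and for a general short-geodesic point (where $\g$ need not realise the global systole and competing systolic loops may be present) sliding along $\g$ can strictly decrease $\sysp$ and leave $\short(S,p)$. The clean ``constant arc'' argument must then be replaced by a direct study of $\min(\ell_\g,\ell_{\delta_1},\dots)$ near $[m]_p$ using the explicit isotropic cone $0_{\Hold}\times\{u\ \text{tangent to the axis of }\g\}$, the crux being to show that the flat direction $w$ of the constraint $\ell_\g$ is not absorbed by the remaining constraints and survives as a degeneracy of $\sysp$. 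This is exactly where the positive-\emph{semi}definiteness of the Hessian, i.e.\ the failure of condition $(H)$, must do the work that strict definiteness does in Akrout's nondegeneracy conclusion.
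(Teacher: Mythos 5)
There is a genuine gap, and you have put your finger on it yourself: nothing in your argument shows that $[m]_p$ is a \emph{critical} point. Producing an arc through $[m]_p$ along which $\ell_\g$ is exactly constant (the slide of the basepoint along the closed geodesic) only exhibits a direction inside the level set of one active length function; when other systolic loops are present the slide can strictly decrease $\sys_p$, and even when $\sys_p$ is constant along the arc a regular point can perfectly well have that property. Your closing paragraph correctly diagnoses this but does not repair it, so as written the proposal establishes nothing beyond the semidefiniteness already recorded in the preceding lemma.

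The paper closes the gap with two ingredients you are missing. First, a reduction: since the set of degenerate points is closed and $\short(S,p)$ is closed, it suffices to treat a \emph{dense} subset of $\short(S,p)$; the paper shows, by a cut--peel--reglue surgery that shortens the closed geodesic below every competing loop, that the points where the closed geodesic is the \emph{unique} systolic loop at $p$ are dense in $\short(S,p)$. This disposes of the competing constraints you rightly worry about: near such a point $\sys_p$ coincides with the single smooth function $\ell_\g$. Second, criticality is obtained not from constancy along the slide but from a dimension count: in Fenchel--Nielsen coordinates adapted to $\g$, the sublevel set $\{\sys_p\le\sys_p(m)\}$ near $[m]_p$ is a proper lower-dimensional subset, whereas at a regular point it would be a half-space; hence $[m]_p$ is critical. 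Only at this stage does your slide argument enter, exactly as in the paper, to show that the critical point is non-isolated (the other basepoints on $\g$ give further critical points) and therefore degenerate. To salvage your draft you should add the density-plus-closedness reduction and replace ``$\sys_p$ is constant along $c$'' by the sublevel-set argument for criticality.
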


\begin{proof}
As $\sys_p$ is a Morse function on $\teich(S,p)-\short(S,p)$, it comes that the points outside $\short(S)$ are regular or nondegenerate.
First we remark, from the definition of regular and critical nondegenerate points, that the set of degenerate points is closed. As $\short(S)$ is closed, we have just to show that it contains a dense subset of critical degenerate points.\par

 So we assume that $m$ is a metric such that $\sys_p(m)$ is realized by a closed geodesic $\g$ passing through $p$. Moreover we assume that this is the unique systolic geodesic loop at $p$. Let us show that this  last condition defines a dense subset of $\short(S)$. By cutting $\g$, we obtain two more boundaries with a marked point on each of them. Each systolic loop at $p$ gives an arc that joins the two boundary components. Let $\a$ be the shortest arc that connect the two boundaries (we assume $\g$ nonseparating, the separating case is easier), applying the peeling method (described in \cite{papadopoulos}) to the arc $\a$ we can shorten $\g$ more than any of the minimal arc. Then we reglue the boundary components, and we consider the point which is the orthogonal projection of $p$. \par
 
 So now the geodesic $\g$ is the unique systolic loop at $p$. Considering a system of Fenchel-Nielsen coordinates containing $\g$, we see that the set of point such that $\sys_p\leq \sys_p(m)$ in a neighborhood of $[m]_p$ is of dimension $\dim(\teich(S))/2$. So $[m]$ can not be a regular point, and it is degenerate because it is not an isolated critical point (consider the other points on $\g$ sufficiently close to $p$).
\end{proof}

%{\small{
\bibliographystyle{alpha}
\bibliography{biblio}%}}

\end{document}